%
%
%
%
\documentclass[reqno]{amsart}

\usepackage{amsmath}
\usepackage{hyperref}
\hypersetup
{
    colorlinks,
    citecolor=blue,
    filecolor=red,
    linkcolor=blue,
    urlcolor=black
}
\usepackage{amssymb}
\usepackage{amsthm}
\usepackage{extarrows}
\usepackage{enumitem}
\usepackage{mathrsfs}
\usepackage{color}
\usepackage
[
a4paper,
vmargin=3cm,
hmargin=2.5cm 
]{geometry}

\newcommand\R{\mathbb R}
\newcommand\Z{\mathbb Z}
\newcommand\T{\mathbb T}
\newcommand\N{\mathbb N}
\newcommand\E{\mathbb E}
\newcommand\e{\varepsilon}
\newcommand\p{\mathbb P}

\newcommand\W{\mathcal W}
\newcommand\s{\mathcal S}
\newcommand\pr{\mathcal Pr}
\newcommand\B{\mathcal B}

\theoremstyle{plain}
\numberwithin{equation}{section}
\newtheorem{Theorem}{Theorem}[section]
\newtheorem{Proposition}{Proposition}[section]
\newtheorem{Lemma}{Lemma}[section]
\newtheorem{Definition}{Definition}[section]
\theoremstyle{definition}
\newtheorem{Remark}{Remark}[section]

\newtheorem{Assumption}{Assumption}



\begin{document}

\title[Stochastic Camassa-Holm Type Equation]{On a stochastic Camassa-Holm type equation with higher order nonlinearities}

\author[C. Rohde]{Christian Rohde}
\address{Institut f\"{u}r Angewandte Analysis und Numerische Simulation, Universit\"{a}t Stuttgart, Pfaffenwaldring 57, 70569 Stuttgart, Germany}
\email{christian.rohde@mathematik.uni-stuttgart.de}
\thanks{C. R. acknowledges funding  by Deutsche Forschungsgemeinschaft
	(DFG, German Research Foundation) under
	Germany's Excellence Strategy --EXC 2075--390740016}

\author[H. Tang]{Hao Tang}
\address{Institut f\"{u}r Angewandte Analysis und Numerische Simulation, Universit\"{a}t Stuttgart, Pfaffenwaldring 57, 70569 Stuttgart, Germany}
\email[Corresponding author]{Hao.Tang@mathematik.uni-stuttgart.de}
\thanks{H. T. is supported by the Alexander von Humboldt Foundation.}

\subjclass[2010]{Primary: 60H15, 35Q51;  Secondary: 35A01, 35B30.}




\keywords{Stochastic generalized Camassa--Holm equation; pathwise solution; noise effect; exiting time; dependence on initial data; global existence; }

\begin{abstract}
	The subject of this paper is a generalized Camassa-Holm equation under random perturbation. We first establish  local existence and uniqueness results as well as  blow-up criteria for pathwise solutions in the Sobolev spaces $H^s$ with $s>3/2$. Then we analyze how noise affects the dependence
	of solutions  on initial data. Even though the noise has some already known regularization effects, much less is known concerning  the dependence on initial data. 
	As a new concept we  introduce the notion of stability of exiting times and construct an example showing that  multiplicative noise (in It\^{o} sense) cannot  improve the stability of the exiting time, and simultaneously improve the continuity of the dependence on initial data. Finally, we  obtain global existence theorems  and estimate  associated probabilities. 
	
\end{abstract}

\maketitle

%

\section{Introduction}
We consider a stochastic version of the generalized Camassa-Holm equation. Let $t$  denote the time variable and  let $x$ 
be the one-dimensional space variable.   The equation 
is given for $k\in \N$ by
\begin{equation}\label{gCH random dissipation}
u_t-u_{xxt}+(k+2)u^ku_x-(1-\partial^2_{xx})h(t,u)\dot{\mathcal W}=(k+1)u^{k-1}u_xu_{xx}+u^ku_{xxx}. 
\end{equation} 
In \eqref{gCH random dissipation}, $h:\R^+\times \R \to \R$ is some nonlinear function 
and $\dot{\mathcal W}$  is a cylindrical Wiener process.
We will consider \eqref{gCH random dissipation}
on the torus, i.e.  $x\in \T=\R/2\pi\Z$.  

For  $h= 0$ and $k=1$, equation \eqref{gCH random dissipation} reduces to the deterministic 
Camassa-Holm (CH) equation given by
\begin{eqnarray}
u_{t}-u_{xxt}+3uu_{x}=2u_{x}u_{xx}+uu_{xxx}.\label{CH}
\end{eqnarray}
Fokas \& Fuchssteiner \cite{Fuchssteiner-Fokas-1981-PhyD} introduced \eqref{CH}  when studying completely integrable 
generalizations of the Korteweg-de-Vries (KdV) equation with bi-Hamiltonian structure whereas  Camassa \& Holm~\cite{Camassa-Holm-1993-PRL} 
proposed \eqref{CH} to describe the unidirectional propagation of shallow water waves over a flat bottom. 
Since then, the CH equation \eqref{CH} has been studied intensively, and we cannot even attempt to survey the vast research history
here. For the paper at hand it is important to mention  the wave-breaking phenomenon which illustrates possible loss of regularity as a fundamental  mechanism  in the  CH equations.  In contrast to smooth 
soliton solutions to the KdV equation~\cite{Kenig-Ponce-Vega-1993-CPAM},  solutions to the CH equation  remain  indeed bounded but  
their slope can become unbounded in finite time, cf.~\cite{Constantin-Escher-1998-Acta} and  related work in  \cite{Constantin-2000-JNS,Constantin-Escher-1998-CPAM,Mckean-1998-AJM}. Moreover, for a smooth initial profile, it is possible to
 predict  exactly (by establishing a necessary and sufficient condition) whether wave-breaking occurs  for solutions to the Cauchy problem for \eqref{CH} \cite{Constantin-Escher-1998-Acta,Mckean-1998-AJM}.  The other essential feature of the CH equation
 is the occurrence of traveling waves with a peak at their crest, exactly like that the governing equations for water waves admit the so-called Stokes waves of the greatest height, see \cite{Constantin-2006-Inventiones,Constantin-Escher-2007-BAMS,Constantin-Escher-2011-Annals}. 
  Bressan \& Constantin 
  proved the existence of  dissipative  as well as conservative solutions in \cite{Bressan-Constantin-2007-ARMA,Bressan-Constantin-2007-AA}.
   Later, Holden \& Raynaud \cite{Holden-Raynaud-2007-CPDE,Holden-Raynaud-2009-DCDS} also
obtained  global conservative and dissipative solutions using Lagrangian transport ideas.

When $h= 0$ and $k=2$, equation \eqref{gCH random dissipation}  forms the cubic equation  
\begin{equation}\label{Novikov}
u_t-u_{xxt}+4u^2u_x=3uu_xu_{xx}+u^2u_{xxx},
\end{equation} 
which has been derived by Novikov in \cite{Novikov-2009-JPA}. It has been  proven that \eqref{Novikov} possesses a bi-Hamiltonian structure 
with an  infinite sequence of conserved quantities and \eqref{Novikov}  admits peaked solutions  of  explicit form 
$u(t, x)=\pm \sqrt{c} e^{-|x-ct|}$ with $c>0$ \cite{Geng-Xue-2009-Nonlinearity}, as well as   multipeakon solutions  with explicit formulas \cite{Hone-etal-2009-DPDE}.
For the study of other deterministic instances of \eqref{gCH random dissipation}  
we refer to \cite{Chen-Li-Yan-2015-DCDS,Himonas-Holliman-2014-ADE,Zhao-Li-Yan-2014-DCDS,Zhou-Mu-2013-JNS}.

Equations like \eqref{CH} are naturally embedded  in high-order descriptions with energy-dissipative evolution. A weakly dissipative evolution  is given  for some parameter $\lambda >0$   by 
\begin{equation}\label{dissipativeCH}
u_t-u_{xxt}+(k+2)u^ku_x- (1-\partial^2_{xx}) (\lambda u)=(k+1)u^{k-1}u_xu_{xx}+u^ku_{xxx},\qquad x\in \T, \, t>0. 
\end{equation} 
 For instance, the weakly dissipative CH equation, i.e. \eqref{dissipativeCH} for $k=1$, has been studied in 
 \cite{Lenells-Wunsch-2013-JDE,Wu-Yin-2009-JDE},
For the Novikov equation  \eqref{Novikov} with the same weakly dissipative 
term $(1-\partial^2_{xx})(\lambda u)$, we  can also refer to \cite{Lenells-Wunsch-2013-JDE}.
  
In this work, we assume that the energy exchange mechanisms are connected with randomness to account for external stochastic influence. 
We are interested in  the case   where   in the term $(1-\partial^2_{xx})(\lambda u)$  the deterministic parameter $\lambda$ is  substituted by a formal cylindrical Wiener process (cf.~Section \ref{Preliminaries and Main Results} for precise definitions), and where  the  previously linear dependency on $u$ is  replaced by a non-autonomous and  nonlinear term $
h(t,u)$.  Thus,  we  consider the Cauchy problem for \eqref{gCH random dissipation} on the 
torus $\T$, 
with  random initial data $u_0=u_0(\omega,x)$.  Applying the operator $(1-\partial_{xx}^2)^{-1}$ to  \eqref{gCH random dissipation}, we reformulate  the problem as the stochastic evolution 
	\begin{equation} \label{SGCH problem}
\left\{
\begin{aligned}
{\rm d}u+\left[u^k\partial_xu+F(u)\right]{\rm d}t&=h(t,u){\rm d}\W,\quad x\in\T, \ t>0,\  k\geq 1,\\
u(\omega,0,x)&=u_0(\omega,x),\quad x\in\T,
\end{aligned} 
\right.
\end{equation}
%
%
%
%
with $F(u)=F_1(u)+F_2(u)+F_3(u)$ and
\begin{equation}\label{F decomposition}
\begin{aligned}
&F_1(u)=(1-\partial_{xx}^2)^{-1}\partial_x\left(u^{k+1}\right),\\
&F_2(u)=\frac{2k-1}{2}(1-\partial_{xx}^2)^{-1}\partial_x\left(u^{k-1}u_x^2\right),\\
&F_3(u)=\frac{k-1}{2}(1-\partial_{xx}^2)^{-1}\left(u^{k-2}u_x^3\right).
\end{aligned} 
\end{equation}
Here we remark that the operator $(1-\partial_{xx}^2)^{-1}$ in $F(\cdot)$ is understood as
\begin{align}\label{Helmboltz operator}
\left[(1-\partial_{xx}^2)^{-1}f\right](x)=[G_{\T}*f](x) \ \text{for}\ f\in L^2(\T) \ \text{with}\  G_{\T}=\frac{\cosh(x-2\pi\left[\frac{x}{2\pi}\right]-\pi)}
{2\sinh(\pi)},
\end{align}
where the convolution is on $\T$ only, and the symbol $[x]$ stands for the integer part of $x$.

The first objective  of this paper is to analyze 
the	   local  existence and  uniqueness  of  pathwise solutions   as well as  blow-up criteria for  problem    \eqref{SGCH problem}
 with nonlinear  multiplicative noise (see Theorem \ref{Local pathwise solution}). 
%
We note  that  for the CH equation with additive noise, existence and uniqueness has been obtained in
\cite{Chen-Gao-Guo-2012-JDE}. For the  stochastic modified CH equation with linear multiplicative noise, we refer to \cite{Chen-Gao-2016-PA}. When the multiplicative noise is given by a one-dimensional Wiener process with $H^s$ diffusion coefficient, the stochastic CH equation was considered in second author's work \cite{Tang-2018-SIMA}. In this paper, we consider the local  existence and  uniqueness  of  pathwise solutions to  \eqref{SGCH problem} as well as  blow-up criteria for more general noise given by a cylindrical Wiener process with nonlinear coefficient.

The second major objective of the paper is to investigate whether stochastic perturbations can improve the dependence on initial data in the Cauchy problem \eqref{SGCH problem}.
We notice that various instances of  transport-type stochastic evolution laws  have been
studied with respect to the  effect of  noise on the regularity of their solutions. 
For example, we refer to \cite{Flandoli-Gubinelli-Priola-2010-Invention,Mollinedo-Olivera-2017-BBMS,Neves-Olivera-2015-NODEA,Fedrizzi-Neves-Olivera-2018-ANSP} for linear stochastic transport equations and to what concerns nonlinear stochastic conservation laws, we refer to \cite{Lions-Perthame-Souganidis-2013-SPDE,Lions-Perthame-Souganidis-2014-SPDE,Gess-Souganidis-2017-CPAM}.  In   \cite{Kim-2010-JFA,GlattHoltz-Vicol-2014-AP,Rockner-Zhu-Zhu-2014-SPTA,Tang-2018-SIMA} the dissipation of energy caused by 
linear multiplicative noise has been analyzed.  

In contrast to these works we  focus in this paper 
on the initial-data dependence in the Cauchy problem \eqref{SGCH problem}. Actually,
much less is known on the noise effect with respect to  the  dependence on initial data. But
 the question whether (and how)  noise can affect  initial-data dependence is interesting. Formally speaking, 
 regularization produced by noise may be  seen like  the  deterministic regularization effect  induced by adding a Laplacian. However,  if one would  add a real Laplacian to the governing equations, then by using  techniques from the theory for  semilinear parabolic equations, the dependence on initial data in some cases turns out to be more regular than being just continuous. For example, for the deterministic incompressible Euler equations, the dependence on initial data cannot be better than continuous \cite{Himonas-Misiolek-2010-CMP}, but for the deterministic incompressible Navier-Stokes equations with sufficiently large viscosity, it is at least Lipschitz continuous in sufficiently high Sobolev norms, see pp. 79--81 in \cite{Henry-1981-book}. This  motivates us  to study whether (and how)  noise can affect  initial-data dependence.  Actually, to our knowledge, there are very few results in this direction. We can only refer to \cite{Tang-2020-Arxiv} for the stochastic Euler-Poincar\'{e} equation with respect to this problem. However, in this work, we have higher order nonlinearities, which requires more technical estimates.
 To analyze initial-data dependence, we introduce the  concept of 
 the stability of the exiting time (see Definition \ref{Stability on exiting time} below), cf. \cite{Tang-2020-Arxiv}. Then we show under some conditions on $h(t,u)$, that the multiplicative noise (in the It\^{o} sense) cannot improve the stability of the exiting time, \textit{and}, at the same time, improve the continuity of the map $u_0\mapsto u$ (see Theorem \ref{weak instability}).  
 It is worth noting that in deterministic cases, the issue of the optimal dependence of solutions (for example, the solution map is continuous but not uniformly continuous) to various nonlinear dispersive and integrable equations has been the subject of many papers. One of the first results of this type  dates back at least as far as 
 to Kato \cite{Kato-1975-ARMA}. Indeed, Kato \cite{Kato-1975-ARMA} proved that the solution map $u_{0}\mapsto u$ in $H^s(\T)$ ($s>3/2$), given by the inviscid Burgers equation,  is not  H\"{o}lder continuous regardless of the H\"{o}lder exponent. Since then different techniques have been successfully applied to various nonlinear dispersive and integrable equations, see \cite{Kenig-Ponce-Vega-2001-Duke,Koch-Tzvetkov-2005-IMRN,Bona-Tzvetkov-2009-DCDS} for example. Particularly, for the incompressible Euler equation, we refer to \cite{Himonas-Misiolek-2010-CMP,Tang-Liu-2014-JMP}, and for CH type equations, we refer to \cite{Himonas-Kenig-2009-DIE,Himonas-Kenig-Misiolek-2010-CPDE,Tang-Liu-2015-ZAMP,Tang-Shi-Liu-2015-MM,Tang-Zhao-Liu-2014-AA} and the references therein.

We conclude the paper with a discussion of time-global well-posedness for  \eqref{SGCH problem}.
For general noise terms, it is difficult to determine whether a local pathwise solution to \eqref{SGCH problem} is globally defined. As shown in \cite{GlattHoltz-Vicol-2014-AP,Kim-2010-JFA,Kroker-Rohde-2012-ANM,Rockner-Zhu-Zhu-2014-SPTA,Tang-2018-SIMA}, the linear 
noise $h(t,u) {\rm d} W=\beta u {\rm d}W$, with $\beta\in\R\setminus\{0\}$ and $W$  being a standard 1-D Brownian motion, acts   dissipatively for many SPDEs. Motivated by these works,  we prove some results in Theorems \ref{Decay result} and \ref{Global existence result}   for the global dynamics of \eqref{SGCH problem} with   linear multiplicative 
noise
, that is
\begin{equation} \label{GCH linear noise}
\left\{\begin{aligned}
&{\rm d}u+\left[u^k\partial_xu+F(u)\right]{\rm d}t=b(t) u{\rm d}W,\quad x\in\T, \ t\in\R^+,\  k\geq 1,\\
&u(\omega,0,x)=u_0(\omega,x),\quad x\in\T.
\end{aligned} \right.
\end{equation}

%
%
%

\section{Preliminaries and Main Results}\label{Preliminaries and Main Results}
\subsection{Noise with $H^s$ coefficient}
We begin by introducing some notations. Throughout the paper, $(\Omega, \mathcal{F},\p)$ denotes a complete probability space, where $\p$ is the probability measure on $\Omega$ and $\mathcal{F}$ is a $\sigma$-algebra.
Let $t>0$. 
$\sigma\left\{\left(x(\tau),y(\tau)\right)_{\tau\in[0,t]}\right\}$ stands for the completion of the union $\sigma$-algebra generated by $\left(x(\tau),y(\tau)\right)$.
All stochastic integrals are defined in the It\^{o} sense and $\E \cdot$ is the mathematical expectation
of $\cdot$ with respect to $\p$. For some separable Banach space $X$, $\B(X)$ denotes the Borel sets of $X$,  and $\pr(X)$ stands for the collection of Borel probability measures on $X$. For $E\subseteq X$, $\textbf{1}_{E}$ is the indicator function on $E$.

$L^2(\T)$ is the usual square integrable function space on $\T$. For $s\in\R$,  $D^s=(1-\partial_{xx}^2)^{s/2}$ is defined by
$\widehat{D^sf}(k)=(1+k^2)^{s/2}\widehat{f}(k)$, where $\widehat{g}$ denote the  Fourier transform  of $g$ on $\T$. The Sobolev space $H^s(\T)$ is defined as
\begin{align*}
H^s(\T)\triangleq\{f\in L^2(\T):\|f\|_{H^s(\T)}^2=\sum_{k\in{\Z}}(1+k^2)^s|\widehat{f}(k)|^2<+\infty\},
\end{align*}
and the inner product    on $H^s(\T)$  
is $(f,g)_{H^s}\triangleq\sum_{k\in{\Z}}(1+k^2)^s\widehat{f}(k)\cdot\overline{\widehat{g}}(k)=(D^sf,D^sg)_{L^2}.$
For function spaces on $\T$, we will drop $\T$ if there is no ambiguity. We will use $\lesssim $ to denote estimates that hold up to some universal \textit{deterministic} constant which may change from line to line but whose meaning is clear from the context. For linear operators $A$ and $B$,  $[A,B]$ stands for the commutator of $A$ and $B$, i.e., $[A,B]=AB-BA$.

We briefly recall some aspects of the stochastic analysis theory  which we use below. We refer the readers to \cite{Prato-Zabczyk-2014-Cambridge,Gawarecki-Mandrekar-2010-Springer,Kallianpur-Xiong-1995-book} for an extended treatment of this subject.

We call
$\s=(\Omega, \mathcal{F},\p,\{\mathcal{F}_t\}_{t\geq0}, \W)$ a stochastic basis, where $\{\mathcal{F}_t\}_{t\geq0}$ is a right-continuous filtration on $(\Omega, \mathcal{F})$ such that $\{\mathcal{F}_0\}$
contains all the $\p$-negligible subsets and $\W(t)=\W(\omega,t),\omega\in\Omega$ is a cylindrical Brownian motion, defined on an auxiliary Hilbert space $U$, which is adapted to $\{\mathcal{F}_t\}_{t\geq0}$. Formally, if $\{e_k\}$ is a complete orthonormal basis of $U$ and $\{W_k\}_{k\geq1}$ is a sequence of mutually independent standard one-dimensional Brownian motions, then one may define 
\begin{equation*}
\W=\sum_{k=1}^\infty e_kW_k,\ \ \p-a.s.
\end{equation*}
To guarantee the convergence of the (formal) summation above, we consider a larger separable Hilbert space $U_0$ such that the canonical embedding $U\hookrightarrow U_0$ is Hilbert--Schmidt.
Then we have that for any $T>0$, cf. \cite{Prato-Zabczyk-2014-Cambridge,Gawarecki-Mandrekar-2010-Springer,Karczewska-1998-AUMCSS},
\begin{equation*}
\W=\sum_{k=1}^\infty e_kW_k\in C([0,T];U_0),\ \ \p-a.s.
\end{equation*}
To define the It\^{o} stochastic integral 
\begin{equation}\label{stochastic integral}
\int_0^\tau G{\rm d}\W=\sum_{k=1}^\infty\int_0^\tau G e_k{\rm d}W_k
\end{equation}
on $H^s$, it is required (see e.g.~\cite{Prato-Zabczyk-2014-Cambridge,Prevot-Rockner-2007-book})  for the predictable stochastic process $G$ to take values in the space of Hilbert-Schmidt operators from $U$ to $H^s$, denoted by $L_2(U; H^s)$.  For such $G$, \eqref{stochastic integral} is a well-defined continuous $H^s$-valued square integrable martingale such that
for all almost surely bounded stopping times $\tau$ and for all $v\in H^s$,
$$\left(\int_0^\tau G\ {\rm d}\W,v\right)_{H^s}=\sum_{k=1}^\infty\int_0^\tau (G e_k,v)_{H^s}\ {\rm d}W_k.$$
Moreover, the desirable Burkholder-Davis-Gundy inequality in our case turns out to be
\begin{align*} 
\E\left(\sup_{t\in[0,T]}\left\|\int_0^t G\ {\rm d}\W\right\|_{H^s}^p\right)
\leq C(p,s) \E\left(\int_0^T \|G\|^2_{L_2(U; H^s)}\ {\rm d}t\right)^\frac{p}{2},\ \ p\geq1.
\end{align*}
or in terms of the coefficients,
\begin{align*} 
\E\left(\sup_{t\in[0,T]}\left\|\sum_{k=1}^\infty\int_0^t Ge_k\ {\rm d}W_k\right\|_{H^s}^p\right)
\leq C(p,s) \E\left(\int_0^T \sum_{k=1}^\infty\|Ge_k\|^2_{H^s}\ {\rm d}t\right)^\frac{p}{2},\ \ p\geq1.
\end{align*}
Here we remark that the stochastic integral \eqref{stochastic integral} does not depend on the choice of the space $U_0$, cf.~\cite{Prato-Zabczyk-2014-Cambridge,Prevot-Rockner-2007-book}. For example, $U_0$ can be defined as 
\begin{equation*}
U_0=\left\{v=\sum_{k=1}^\infty a_ke_k:\sum_{k=1}^\infty\frac{a_k^2}{k^2}<\infty\right\},\ \ \|v\|_{U_0}=\sum_{k=1}^\infty\frac{a_k^2}{k^2}.
\end{equation*}
%
%
%
\subsection{Definitions of the solutions and stability of the exiting times}
We now  make the precise notion of a  pathwise solution to \eqref{SGCH problem}.
\begin{Definition}[Pathwise solutions]\label{pathwise solution definition}
	Let $\s=(\Omega, \mathcal{F},\p,\{\mathcal{F}_t\}_{t\geq0}, \W)$ be a fixed stochastic basis. Let $s>3/2$ and $u_0$ be an $H^s$-valued $\mathcal{F}_0$-measurable random variable $($relative to $\s)$.
	\begin{enumerate}
		\item A local pathwise solution to \eqref{SGCH problem} is a pair $(u,\tau)$, where $\tau\ge 0$ is a stopping time satisfying $\p\{\tau>0\}=1$ and
		$u:\Omega\times[0,\infty]\rightarrow H^s$  is an $\mathcal{F}_t$-predictable $H^s$-valued process satisfying
		\begin{equation*}
		u(\cdot\wedge \tau)\in C([0,\infty);H^s)\ \ \p-a.s.,
		\end{equation*}
		and for all $t>0$,
		\begin{equation*} 
		u(t\wedge \tau)-u(0)+\int_0^{t\wedge \tau}
		\left[u^k\partial_xu+F(u)\right]{\rm d}t'
		=\int_0^{t\wedge \tau}h(t',u){\rm d}\W\ \ \p-a.s.
		\end{equation*}
		\item The local pathwise solutions are said to be pathwise unique, if given any two pairs of local pathwise solutions $(u_1,\tau_1)$ and $(u_2,\tau_2)$ with $\p\left\{u_1(0)=u_2(0)\right\}=1,$ we have
		\begin{equation*}
		\p\left\{u_1(t,x)=u_2(t,x),\ \forall\ (t,x)\in[0,\tau_1\wedge\tau_2]\times \T\right\}=1.
		\end{equation*}
		\item Additionally, $(u,\tau^*)$ is called a maximal pathwise solution to \eqref{SGCH problem} if $\tau^*>0$ almost surely and if there is an increasing sequence $\tau_n\rightarrow\tau^*$ such that for any $n\in\N$, $(u,\tau_n)$ is a pathwise solution to \eqref{SGCH problem} and on the set $\{\tau^*<\infty\}$,
		\begin{equation*} 
		\sup_{t\in[0,\tau_n]}\|u\|_{H^s}\geq n.
		\end{equation*}
		\item If $(u,\tau^*)$ is a maximal pathwise solution and
		$\tau^*=\infty$ almost surely, then we say that the pathwise solution exists globally.
	\end{enumerate}
\end{Definition}
A major result of this paper is a  (negative) dependence statement for the initial data of the solutions. Precisely, it refers to the stability  of a point in time when the solution leaves a certain range. 
This point in time is called  exiting time and we introduce 
\begin{Definition}[Stability of exiting time]
	\label{Stability on exiting time}
	Let $s>3/2$ and $\s=(\Omega, \mathcal{F},\p,\{\mathcal{F}_t\}_{t\geq0}, \W)$ be a fixed stochastic basis. Let $u_0$ be an $H^s$-valued $\mathcal{F}_0$-measurable random variable such that $\E\|u_0\|^2_{H^s}<\infty$. Assume that $\{u_{0,n}\}$ is any sequence of $H^s$-valued $\mathcal{F}_0$-measurable random variables satisfying $\E\|u_{0,n}\|^2_{H^s}<\infty$. For each $n$, let $u$ and $u_n$ be the unique pathwise solutions to \eqref{SGCH problem} with initial value $u_0$ and $u_{0,n}$, respectively.\\
	 For any $R>0$ and $n\in\N$, we define the $R$-exiting times 
	\begin{align*}
	\tau^R_{n}:=\inf\left\{t\geq0: \|u_n(t)\|_{H^s}>R\right\} \mbox{ and }
	\tau^R:=\inf\left\{t\geq0: \|u(t)\|_{H^s}>R\right\}, 
	\end{align*}
	where $\inf \varnothing= \infty$. Furthermore,
	\begin{enumerate}
		\item If $u_{0,n}\rightarrow u_0$ in $H^{s}$ $\p-a.s.$ implies that
		\begin{align}
		\lim_{n\rightarrow\infty}\tau^R_{n}=\tau^R\ \  \p-a.s.,\label{hitting time convergence}
		\end{align}
		then the $R$-exiting time of $u$ is said to be stable.
		\item If $u_{0,n}\rightarrow u_0$ in $H^{s'}$ for all $s'<s$ almost surely  implies that \eqref{hitting time convergence} holds true,
		 the $R$-exiting time of $u$ is said to be strongly stable.
	\end{enumerate}

\end{Definition}
%
\subsection{Assumptions} 
For our results on existence of pathwise solutions,  on the stability of exiting times and global well-posedness of
 \eqref{gCH random dissipation}, 
we rely   in the following sections on  generic but slightly different assumptions concerning 
 the data in \eqref{gCH random dissipation}. These are summarized here, for a comment on possible relaxed versions we refer to Remark 
\ref{Remark for assumption}.

\begin{Assumption}\label{Assumption-1} 
	We assume that $h:[0,\infty)\times H^s\ni (t,u)\mapsto h(t,u)\in L_2(U; H^s)$ for $u\in H^s$ with $s\geq0$ such that if $u:\Omega\times [0,T]\rightarrow H^s$ is predictable, then $h(t,u)$ is also predictable. Furthermore, we assume the following:
		
		\begin{enumerate}

	\item There is an increasing and locally bounded function $f(\cdot):[0,+\infty)\rightarrow[0,+\infty)$ with $f(0)=0$ such that for any $t>0$ and $u\in H^s$ with $s>\frac{1}{2}$,
		\begin{align}\label{assumption 1 for h}
		\|h(t,u)\|_{L_2(U; H^s)}\leq f(\|u\|_{W^{1,\infty}})(1+\|u\|_{H^s}).
		\end{align}
		Particularly, if $h$ does not depend on $u$, i.e., the additive noise case, then the condition $f(0)=0$ can be removed.
		\item
		There is an increasing  locally bounded function $g(\cdot):[0,+\infty)\rightarrow[0,+\infty)$, such that for any $t>0$ and $u\in H^s$ with $s>\frac{1}{2}$,
		\begin{equation}\label{assumption 2 for h}
		\|h(t,u)-h(t,v)\|_{L_2(U; H^s)}\leq g(\|u\|_{H^{s}}+\|v\|_{H^{s}})\|u-v\|_{H^s}.
		\end{equation}
	\end{enumerate}
\end{Assumption}

\begin{Assumption}\label{Assumption-2} 
	When we consider the initial-data dependence problem for \eqref{SGCH problem} in Section \ref{Weak non-stability}, we need a similar assumption on $h(t,\cdot)$. For $s\geq 0$ and $u\in H^s$, $h:[0,\infty)\times H^s\ni (t,u)\mapsto h(t,u)\in L_2(U; H^s)$ for $u\in H^s$ with $s\geq0$ such that if $u:\Omega\times [0,T]\rightarrow H^s$ is predictable, then $h(t,u)$ is also predictable. Moreover, for all $t\geq0$, we assume
	\begin{equation}\label{assumption G}
	\|h(t,u)\|_{L_2(U; H^s)}\leq \|F(u)\|_{H^s},\ \
	\|h(t,u)-h(t,v)\|_{L_2(U; H^s)}\leq \|F(u)-F(v)\|_{H^s},
	\end{equation}
	where $F(\cdot)$ is defined by \eqref{F decomposition}.
\end{Assumption}

\begin{Assumption}\label{Assumption-3}
	When considering \eqref{GCH linear noise} with non-autonomous linear noise $b(t) u {\rm d}W$, we  assume that $b(t)\in C([0,\infty))$ such that $\displaystyle0<b_*\leq b^2(0)\leq\sup_{t\ge0}b^2(t)\leq b^*$ for some $b_*,b^*\in\R$.
\end{Assumption}

\subsection{Main results} In this section we summarize our major contributions providing  proofs later in the remainder of the paper.

\begin{Theorem}\label{Local pathwise solution}
	Let $s>3/2$, $k\geq1$ and let $h(t,u)$ satisfy Assumption \ref{Assumption-1}. For a given stochastic basis $\s=(\Omega, \mathcal{F},\p,\{\mathcal{F}_t\}_{t\geq0}, \W)$, if $u_0$ is an $H^s$-valued $\mathcal{F}_0$-measurable random variable satisfying $\E\|u_0\|^2_{H^s}<\infty$, then there is a local unique pathwise solution $(u,\tau)$ to \eqref{SGCH problem} in the sense of Definition \ref{pathwise solution definition} with
	\begin{equation}\label{L2 moment bound}
	u(\cdot\wedge \tau)\in L^2\left(\Omega; C\left([0,\infty);H^s\right)\right).
	\end{equation}
	Moreover, $(u,\tau)$ can be extended to a unique maximal pathwise solution $(u,\tau^*)$ with
	\begin{equation}\label{Blow-up criteria common}
	\textbf{1}_{\{\tau^*<\infty\}}=\textbf{1}_{\left\{\limsup_{t\rightarrow \tau^*}\|u(t)\|_{W^{1,\infty}}=\infty\right\}}\ \p-a.s.
	\end{equation}
\end{Theorem}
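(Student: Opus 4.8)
The plan is to construct the solution by a cutoff-and-mollification scheme followed by a limiting argument, the essential difficulty being that the transport term $u^k\partial_x u$ loses one spatial derivative, so that none of the maps $u\mapsto u^k\partial_x u$, $u\mapsto F(u)$ is Lipschitz from $H^s$ into itself. First I would fix $R>0$, choose a smooth cutoff $\theta_R\colon[0,\infty)\to[0,1]$ with $\theta_R\equiv1$ on $[0,R]$ and $\theta_R\equiv0$ on $[2R,\infty)$, and introduce a Friedrichs mollifier $J_\epsilon$. The approximate problem
\begin{equation*}
{\rm d}u+\theta_R(\|u\|_{H^s})\bigl[J_\epsilon\bigl((J_\epsilon u)^k\partial_x J_\epsilon u\bigr)+F(u)\bigr]{\rm d}t=\theta_R(\|u\|_{H^s})h(t,u){\rm d}\W
\end{equation*}
has globally Lipschitz drift and diffusion coefficients from $H^s$ into $H^s$ and into $L_2(U;H^s)$, respectively: the mollifier removes the derivative loss, the cutoff furnishes global bounds, $F$ is smoothing because $(1-\partial_{xx}^2)^{-1}\partial_x$ and $(1-\partial_{xx}^2)^{-1}$ gain one and two derivatives, and the noise is controlled by Assumption \ref{Assumption-1}. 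Hence the standard theory of SDEs in Hilbert spaces yields a unique global solution $u^{\epsilon,R}\in L^2(\Omega;C([0,\infty);H^s))$.

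The core of the argument is a uniform-in-$\epsilon$ energy estimate. Applying It\^{o}'s formula to $\|u^{\epsilon,R}\|_{H^s}^2=\|D^s u^{\epsilon,R}\|_{L^2}^2$, the decisive point is the cancellation in the top-order part of the transport term: writing $D^s(u^k\partial_x u)=u^k\partial_x D^s u+[D^s,u^k]\partial_x u$, the first contribution integrates by parts to $-\tfrac{k}{2}\int u^{k-1}u_x(D^s u)^2$, bounded by $\|u\|_{W^{1,\infty}}^k\|u\|_{H^s}^2$, while the commutator is controlled via the Kato--Ponce and Moser inequalities by $f(\|u\|_{W^{1,\infty}})\|u\|_{H^s}^2$. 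The higher-order nonlinearities $F_2$ and $F_3$ in \eqref{F decomposition}, absent in the classical Camassa--Holm case, demand the most delicate bookkeeping: one estimates $\|F_2(u)\|_{H^s}\lesssim\|u^{k-1}u_x^2\|_{H^{s-1}}$ and $\|F_3(u)\|_{H^s}\lesssim\|u^{k-2}u_x^3\|_{H^{s-2}}$ through Moser-type product and interpolation estimates, again in the form $f(\|u\|_{W^{1,\infty}})\|u\|_{H^s}$. Treating the noise by the Burkholder--Davis--Gundy inequality and Assumption \ref{Assumption-1}, and using that the cutoff renders all prefactors bounded, Gronwall's inequality yields a bound on $\E\sup_{t\le T}\|u^{\epsilon,R}\|_{H^s}^2$ independent of $\epsilon$.

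Next I would pass to the limit $\epsilon\to0$. A difference estimate for $u^{\epsilon,R}-u^{\epsilon',R}$ carried out in the weaker norm $H^{s-1}$ (where the transport nonlinearity is Lipschitz on bounded sets) shows the family is Cauchy in $C([0,T];H^{s-1})$; interpolating against the uniform $H^s$ bound upgrades this to convergence in $C([0,T];H^{s'})$ for every $s'<s$, and a weak-compactness plus time-continuity argument (a Bona--Smith type approximation together with lower semicontinuity of the norm) produces a limit $u_R\in C([0,\infty);H^s)$ solving the cutoff equation without the mollifier. Pathwise uniqueness follows from the same $H^{s-1}$ difference estimate and Gronwall, using the local Lipschitz property of the coefficients. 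Defining $\tau$ as the first time $\|u_R\|_{H^s}$ reaches a level below $R$, the cutoff is inactive on $[0,\tau]$, so $(u_R,\tau)$ is a local pathwise solution of \eqref{SGCH problem} in the sense of Definition \ref{pathwise solution definition} satisfying \eqref{L2 moment bound}, and consistency across different $R$ (via uniqueness) lets me glue these into a maximal solution $(u,\tau^*)$.

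Finally, for the blow-up criterion \eqref{Blow-up criteria common} the inclusion $\{\tau^*<\infty\}\subseteq\{\limsup_{t\to\tau^*}\|u\|_{W^{1,\infty}}=\infty\}$ is the substantive one: on the complementary event the energy estimate of the second step, whose Gronwall factor was arranged to depend only on $\|u\|_{W^{1,\infty}}$ through the function $f$, keeps $\|u(t)\|_{H^s}$ bounded up to $\tau^*$, contradicting the defining blow-up property of a maximal solution. The reverse inclusion is immediate from the embedding $H^s\hookrightarrow W^{1,\infty}$ valid for $s>3/2$. I expect the main obstacle to be precisely the commutator and Moser estimates for the higher-order terms $F_2,F_3$ and for the cubic gradient structure they carry: each must be closed with a prefactor depending solely on $\|u\|_{W^{1,\infty}}$ and not on $\|u\|_{H^s}$, as that structural feature is exactly what makes both the uniform bounds and the $W^{1,\infty}$ blow-up criterion valid.
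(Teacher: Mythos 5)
Your overall architecture matches the paper's: cutoff plus mollified transport term, a uniform-in-$\epsilon$ $H^s$ energy estimate whose prefactors depend only on $\|u\|_{W^{1,\infty}}$ (via the commutator cancellation and Lemmas \ref{Kato-Ponce commutator estimate}, \ref{Moser estimate}, \ref{F lemma}), removal of the cutoff by a stopping time, and the blow-up criterion from the $W^{1,\infty}$-only Gronwall factor together with $H^s\hookrightarrow W^{1,\infty}$. Your choice of cutoff $\theta_R(\|u\|_{H^s})$ (the paper uses $\chi_R(\|u\|_{W^{1,\infty}})$) even simplifies one point: it makes the coefficients of \eqref{approximate problem} genuinely globally Lipschitz on $H^s$, so global existence of $u^{\epsilon,R}$ is immediate, whereas the paper must argue separately, via a blow-up criterium for $u_\epsilon$, that $T_\epsilon=\infty$. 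However, there is a genuine gap at the decisive step, the limit $\epsilon\to0$. Your claimed Cauchy property of $\{u^{\epsilon,R}\}$ in $C([0,T];H^{s-1})$ on a \emph{fixed deterministic} interval cannot be established as stated: the difference equation contains the term $\bigl(\theta_R(\|u^{\epsilon,R}\|_{H^s})-\theta_R(\|u^{\epsilon',R}\|_{H^s})\bigr)\bigl[J_{\epsilon'}((J_{\epsilon'}u^{\epsilon',R})^k\partial_xJ_{\epsilon'}u^{\epsilon',R})+F(u^{\epsilon',R})\bigr]$ (and its analogue in front of $h$), and the map $u\mapsto\theta_R(\|u\|_{H^s})$ is Lipschitz only with respect to the $H^s$ distance. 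This contribution is of size $\|u^{\epsilon,R}-u^{\epsilon',R}\|_{H^s}$, which is merely $O(R)$ and cannot be absorbed into a Gronwall inequality for the $H^{s-1}$ distance, so the estimate does not close. The natural repair, stopping both solutions before they reach level $R$ so that both cutoffs equal $1$, reintroduces exactly the lifespan problem the paper flags as the core stochastic difficulty: one would have to show these stopping times do not degenerate as $\epsilon\to0$ (the paper's remark that $\p\{\inf_{\epsilon>0}T_\epsilon>0\}=1$ is "not clear in general"). This requires either the Glatt-Holtz--Ziane abstract convergence machinery with localized pairwise estimates, or the route the paper actually takes: stochastic compactness (Prokhorov/Skorokhod) yielding a martingale solution of the cut-off problem for $s>3$, pathwise uniqueness, the Gy\"{o}ngy--Krylov characterization to recover convergence in probability on the original stochastic basis, and then a \emph{second} approximation stage (mollified initial data, localized difference estimates, interpolation) to descend from $s>3$ to $s>3/2$. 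Your single-tier plan assumes away both devices, and your "weak-compactness plus Bona--Smith" sentence does not supply them.

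Two further, smaller gaps. First, with a fixed deterministic $R$ and only $\E\|u_0\|^2_{H^s}<\infty$, your stopping time $\tau$ satisfies $\tau=0$ on $\{\|u_0\|_{H^s}\geq R\}$, which may have positive probability, so $\p\{\tau>0\}=1$ fails; the paper repairs this with the decomposition $\Omega_m=\{m-1\leq\|u_0\|_{H^s}<m\}$ and the gluing $u=\sum_{m\geq1}u_m\textbf{1}_{\Omega_m}$, which is also what delivers the moment bound \eqref{L2 moment bound}; your phrase "consistency across different $R$" gestures at this but must be executed explicitly. Second, in the blow-up step you apply the energy estimate directly to the limit solution $u$, but It\^{o}'s formula for $\|u\|^2_{H^s}$ is not directly available since $u^k\partial_xu\in H^{s-1}$ only; the paper mollifies with $T_\epsilon$, applies It\^{o} to $\|T_\epsilon u\|^2_{H^s}$, uses Lemma \ref{Te commutator}, and passes $\epsilon\to0$, organizing the whole comparison through the stopping-time families $\tau_{1,m}$, $\tau_{2,n}$ to conclude $\tau_1=\tau_2$ almost surely; your contradiction argument needs this stopping-time bookkeeping to be legitimate, since the quantity $\limsup_{t\to\tau^*}\|u(t)\|_{W^{1,\infty}}$ must be handled $\omega$-wise rather than through a single expectation bound on a deterministic interval.
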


\begin{Remark} We remark here that $F_3(u)$ in \eqref{F decomposition} will disappear when $k=1$. The proof for Theorem \ref{Local pathwise solution} combines the techniques as employed in the papers \cite{Breit-Feireisl-Hofmanova-2018-CPDE,Breit-Feireisl-Hofmanova-2018-Book,Breit-Hofmanova-2016-IUMJ,Crisan-Flandoli-Holm-2018-JNS,Debussche-Glatt-Temam-2011-PhyD,GlattHoltz-Vicol-2014-AP,Tang-2018-SIMA}. However, the Faedo-Galerkin method used e.g.~in \cite{GlattHoltz-Vicol-2014-AP,Debussche-Glatt-Temam-2011-PhyD} cannot be utilized  directly since in \eqref{SGCH problem}, we do \textit{not} have additional constraints like {incompressibility}, which guarantees the global existence of an approximate solution (see, e.g. \cite{Flandoli-2008-SPDE-book,GlattHoltz-Vicol-2014-AP}).  Without this, we need to find a  positive lower bound for the lifespan of the approximate solutions, which is generally not clear. Particularly, for our case, this difficulty will be overcome by constructing a suitable approximation scheme and establishing an appropriate blow-up criterium, which applies not only for $u$, but also for the approximate solution $u_\e$. This idea is 
	 transferred from the recent work \cite{Crisan-Flandoli-Holm-2018-JNS} on deriving blow-up criteria. 
\end{Remark}

The next result addresses the dependence of the solution on initial data giving at least   a partial answer.

\begin{Theorem}[Weak instability]\label{weak instability}
	Consider the periodic initial value problem \eqref{SGCH problem}, where $F(\cdot)$ is given by \eqref{F decomposition} with $k\geq 1$. Let  $\s=(\Omega, \mathcal{F},\p,\{\mathcal{F}_t\}_{t\geq0}, \W)$ be a fixed stochastic basis and let $s>3/2$.  If $h$ satisfies Assumption \ref{Assumption-2}, then at least one of the following properties holds true.
	\begin{enumerate}
		\item For any $R\gg1$, the $R$-exiting time is \textbf{not} strongly stable for the zero solution in the sense of Definition \ref{Stability on exiting time};
		\item The solution map $u_0\mapsto u$ defined by solving \eqref{SGCH problem} is \textbf{not} uniformly continuous as a map from $L^2(\Omega,H^s)$ into $L^2\left(\Omega; C\left([0,T];H^s\right)\right)$ for any $T>0$. 
		More precisely, there exist two sequences of solutions $u_{1,n}(t)$ and $u_{2,n}(t)$, and two sequences of stopping times $\tau_{1,n}$ and $\tau_{2,n}$, such that
		
		\begin{itemize}
			
			\item $\p\{\tau_{i,n}>0\}=1$ for each $n>1$ and $i=1,2$. Besides,
			\begin{equation}
			\lim_{n\rightarrow\infty} \tau_{1,n}=\lim_{n\rightarrow\infty} \tau_{2,n}=\infty \  \ \p-a.s.\label{Non--uniform dependence condition 1}
			\end{equation}
			
			\item For $i=1,2$, $u_{i,n}\in C([0,\tau_{i,n}];H^s) $ $\p-a.s.$, and
			\begin{equation}\label{Non--uniform dependence condition 2}
			\E\left(\sup_{t\in[0,\tau_{1,n}]}\|u_{1,n}(t)\|^2_{H^{s}}+
			\sup_{t\in[0,\tau_{2,n}]}\|u_{2,n}(t)\|^2_{H^{s}}\right)\lesssim 1.
			\end{equation}
			
			\item At $t=0$,
			\begin{align}
			\lim_{n\rightarrow\infty}\E\|u_{1,n}(0)-u_{2,n}(0)\|^2_{H^{s}}=0.\label{Non--uniform dependence condition 3}
			\end{align}
			\item For any $T>0$,
			\begin{align}
			\liminf_{n\rightarrow\infty}\E
			\sup_{t\in[0,T\wedge\tau_{1,n}\wedge\tau_{2,n}]}\|u_{1,n}(t)-u_{2,n}(t)\|_{H^{s}}^2
			\gtrsim 
			\begin{cases}
			\displaystyle\left(\sup_{t\in[0,T]}|\sin t|\right)^2, 
			& {\rm if}  \ k\ \text{is odd},\\
			\displaystyle\left(\sup_{t\in[0,T]}\left|\sin \frac{t}{2}\right|\right)^2, 
			&  {\rm if} \ k\ \text{is even}.
			\end{cases}
			\label{Non--uniform dependence condition 4}
			\end{align}
		\end{itemize}
		
	\end{enumerate}
	
\end{Theorem}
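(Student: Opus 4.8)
The statement is a dichotomy, and my plan is to prove the contrapositive implication: if property (1) fails then property (2) holds. Accordingly I assume that (1) fails, i.e.\ that there is some $R\gg1$ for which the $R$-exiting time of the zero solution is strongly stable, and I build sequences realizing (2). Note first that Assumption \ref{Assumption-2} forces $h(t,0)=0$, since $\|h(t,0)\|_{L_2(U;H^s)}\le\|F(0)\|_{H^s}=0$; hence $u\equiv0$ is the unique solution of \eqref{SGCH problem} issued from $u_0=0$, and its $R$-exiting time is $\infty$ for every $R>0$. The driving objects are the explicit deterministic profiles
\[
u^{\omega,n}(t,x)=\omega\,n^{-1/k}+n^{-s}\cos\!\left(nx-\omega^kt\right),\qquad \omega\in\R,\ n\in\N,
\]
where the exponent $-1/k$ is chosen so that the low-frequency part advects the high-frequency wave at transport speed $(\omega n^{-1/k})^k=\omega^kn^{-1}$, producing the order-one phase $nx-\omega^kt$. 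I take $(\omega_1,\omega_2)=(1,-1)$ when $k$ is odd and $(\omega_1,\omega_2)=(1,0)$ when $k$ is even, so that the two phase rates differ by $2$, respectively $1$. Since the zero modes and the frequency-$n$ modes are $H^s$-orthogonal, the identity $\cos A-\cos B=-2\sin\frac{A+B}{2}\sin\frac{A-B}{2}$ gives, uniformly in $t\in[0,T]$,
\[
\big\|u^{\omega_1,n}-u^{\omega_2,n}\big\|_{H^s}\longrightarrow 2\sqrt{\pi}\,\big|\sin\!\big(\tfrac{\omega_1^k-\omega_2^k}{2}\,t\big)\big|,
\]
which reproduces, up to a fixed constant, the right-hand side of \eqref{Non--uniform dependence condition 4}.

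For each $n$ let $u_{i,n}$ ($i=1,2$) be the pathwise solution of \eqref{SGCH problem} from the deterministic datum $u_{i,n}(0)=u^{\omega_i,n}(0)$ provided by Theorem \ref{Local pathwise solution}, and set $\tau_{i,n}:=\tau^R_{i,n}$. Because the frequency-$n$ part has $H^{s'}$-norm of order $n^{s'-s}$, we have $u_{i,n}(0)\to0$ in every $H^{s'}$ with $s'<s$; the assumed strong stability then yields $\tau_{i,n}\to\infty$ almost surely, which is \eqref{Non--uniform dependence condition 1}. The a priori bound \eqref{Non--uniform dependence condition 2} is immediate from $\sup_{[0,\tau^R_{i,n}]}\|u_{i,n}\|_{H^s}\le R$, while \eqref{Non--uniform dependence condition 3} holds because $u_{1,n}(0)-u_{2,n}(0)=(\omega_1-\omega_2)n^{-1/k}$ is constant in $x$ with $H^s$-norm tending to $0$. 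Granting the tracking estimate below, \eqref{Non--uniform dependence condition 4} follows from the reverse triangle inequality combined with the deterministic separation above, using that $\tau_{1,n}\wedge\tau_{2,n}\to\infty$ so that $T\wedge\tau_{1,n}\wedge\tau_{2,n}=T$ with probability tending to one.

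The heart of the argument is the tracking estimate
\[
\E\sup_{t\in[0,T\wedge\tau_{i,n}]}\big\|u_{i,n}-u^{\omega_i,n}\big\|_{H^s}^2\longrightarrow0\qquad(n\to\infty).
\]
I would first verify that the residual $\mathcal R^{\omega,n}:=\partial_tu^{\omega,n}+(u^{\omega,n})^k\partial_xu^{\omega,n}+F(u^{\omega,n})$ tends to $0$ in $H^s$: the two leading oscillatory contributions cancel by the choice of phase rate, while the smoothing of $(1-\partial_{xx}^2)^{-1}\partial_x$ in $F_1,F_2$ and of $(1-\partial_{xx}^2)^{-1}$ in $F_3$ renders each remaining piece $o(1)$ in $H^s$. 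Writing $u=u_{i,n}$, $w=u^{\omega_i,n}$ and $v=u-w$ (so that $v(0)=0$), I apply the It\^o formula to $\|v\|_{H^s}^2$. The nonlinear differences $(v,\,u^k\partial_xu-w^k\partial_xw)_{H^s}$ and $(v,\,F(u)-F(w))_{H^s}$ are bounded by $C(R)\|v\|_{H^s}^2$ through the $D^s$-commutator estimates already used for Theorem \ref{Local pathwise solution}; the residual contributes $\lesssim\|v\|_{H^s}^2+\|\mathcal R^{\omega_i,n}\|_{H^s}^2$; and, decisively, Assumption \ref{Assumption-2} controls the It\^o correction by $\|h(t,u)\|_{L_2(U;H^s)}^2\le\|F(u)\|_{H^s}^2\lesssim\|F(w)\|_{H^s}^2+\|v\|_{H^s}^2$, whose first summand is $o(1)$. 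A Burkholder--Davis--Gundy estimate for the martingale term (again governed by $\|F\|_{H^s}$ through Assumption \ref{Assumption-2}) followed by Gronwall's lemma then gives $\E\sup_{[0,T\wedge\tau_{i,n}]}\|v\|_{H^s}^2\lesssim\E\|v(0)\|_{H^s}^2+o(1)=o(1)$, as required.

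I expect this tracking estimate to be the main obstacle, for two reasons specific to the present model. First, the genuinely higher-order nonlinearities $F_2(u)=\frac{2k-1}{2}(1-\partial_{xx}^2)^{-1}\partial_x(u^{k-1}u_x^2)$ and $F_3(u)=\frac{k-1}{2}(1-\partial_{xx}^2)^{-1}(u^{k-2}u_x^3)$ make both the residual bound and the local Lipschitz estimate for $F(u)-F(w)$ considerably more delicate than in the $k=1$ case, requiring careful product and commutator estimates at the $H^s$ level. Second, one must ensure that the stochastic terms do not destroy the tracking; this is exactly where Assumption \ref{Assumption-2} is indispensable, since it ties the noise amplitude to $\|F(u)\|_{H^s}$, which is $o(1)$ along the chosen high-frequency profiles and therefore cannot prevent the deterministic separation from surviving. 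The breakdown of precisely this mechanism when the exiting time is \emph{not} strongly stable---so that the solutions need not remain bounded long enough to run the Gronwall argument---is what makes the conclusion a dichotomy rather than an unconditional non-uniform-dependence statement.
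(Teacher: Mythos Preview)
Your overall architecture is correct and matches the paper: assume strong stability of the $R$-exiting time at zero, use the explicit profiles $u^{l,n}=l\,n^{-1/k}+n^{-s}\cos(nx-lt)$ (with $l^k=l$), invoke strong stability to get $\tau_{i,n}\to\infty$, and reduce \eqref{Non--uniform dependence condition 4} to a tracking estimate between actual and approximate solutions. The residual analysis and the handling of the It\^o correction via Assumption~\ref{Assumption-2} are also in line with the paper.

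The gap is in your direct $H^s$ tracking estimate. You claim that $(v,\,u^k\partial_xu-w^k\partial_xw)_{H^s}\le C(R)\|v\|_{H^s}^2$ follows from the commutator estimates used in Theorem~\ref{Local pathwise solution}, but this fails: those estimates exploit that both factors in $(D^s u,\,D^s[u^ku_x])_{L^2}$ involve the \emph{same} $u$, so the top-order piece integrates by parts. For the difference, writing $u^ku_x-w^kw_x=\tfrac{1}{k+1}\partial_x(Qv)$ with $Q=\sum_{i=0}^{k}u^{k-i}w^i$, the term $(D^sv,\,D^s(vQ_x))_{L^2}$ inevitably produces a contribution governed by $\|Q\|_{H^{s+1}}$ (equivalently $\|u\|_{H^{s+1}}$ or $\|w\|_{H^{s+1}}$). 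Since $\|w\|_{H^{s+1}}=\|u^{\omega,n}\|_{H^{s+1}}\sim n$ and $\|u\|_{H^{s+1}}$ is not controlled on $[0,\tau_{i,n}^R]$, your Gronwall constant picks up a factor $n$, and $e^{CnT}$ annihilates the $o(1)$ forcing. The same loss of one derivative obstructs any splitting based on Kato--Ponce or Moser alone at the $H^s$ level.

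The paper circumvents this by \emph{not} closing in $H^s$. It fixes $\delta\in(1/2,\min\{s-1,3/2\})$ so that $\delta+1\le s$, and uses Lemma~\ref{Taylor} to bound $(D^\delta v,\,D^\delta\partial_x(Qv))_{L^2}\lesssim \|Q\|_{H^s}\|v\|_{H^\delta}^2\le C(R)\|v\|_{H^\delta}^2$; this yields $\E\sup_{[0,T\wedge\tau^R_{l,n}]}\|v\|_{H^\delta}^2\lesssim n^{-2r_s}$ (Lemma~\ref{v l n estimate lemma}). Separately, a crude $H^{2s-\delta}$ energy estimate on the actual solution together with $\|u^{l,n}(0)\|_{H^{2s-\delta}}\lesssim n^{s-\delta}$ gives $\E\sup\|v\|_{H^{2s-\delta}}^2\lesssim n^{2(s-\delta)}$. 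Interpolating between $H^\delta$ and $H^{2s-\delta}$ then yields $\E\sup\|v\|_{H^s}\lesssim n^{r'_s}$ with $r'_s<0$ (see \eqref{difference r's}), which is the tracking estimate you need. You should replace your direct $H^s$ Gronwall by this low/high interpolation argument.
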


\begin{Remark}
	To prove Theorem \ref{weak instability}, we assume that for some $R_0\gg1$, the $R_0$-exiting time is strongly stable at the zero solution.  Then we will construct an example to show that the solution map $u_0\mapsto u$ defined by \eqref{SGCH problem} is not uniformly continuous. This example involves the construction (for each  $s>3/2$) of two sequences of solutions which are converging at time zero but remain far apart at any later time. Therefore we will first construct two sequences of  approximation solutions $u^{i,n}$ ($i\in\{1,2\}$) such that the actual solutions $u_{i,n}$ starting from $u_{i,n}(0)=u^{i,n}(0)$ satisfy that as $n\rightarrow\infty$,
	\begin{align}
	\lim_{n\rightarrow\infty}\E\sup_{[0,\tau_{i,n}]}\|u_{i,n}-u^{i,n}\|_{H^s}
	=0,\label{non uniform remark equ}
	\end{align}
	where $u_{i,n}$ exists at least on $[0,\tau_{i,n}]$. If we obtain \eqref{non uniform remark equ}, then we can estimate the approximation solutions instead of the actual solutions and obtain  \eqref{Non--uniform dependence condition 4}. In deterministic case, for other works using the method of approximate solutions for studying dependence on initial data,  we refer the reader to \cite{Koch-Tzvetkov-2005-IMRN,Bona-Tzvetkov-2009-DCDS,Himonas-Kenig-Misiolek-2010-CPDE,Tang-Zhao-Liu-2014-AA,Tang-Liu-2015-ZAMP} and the references therein. However, in contrast to deterministic cases where lifespan estimate can be achieved (see (4.7)--(4.8) in \cite{Tang-Shi-Liu-2015-MM} and (3.8)--(3.9) in \cite{Tang-Zhao-Liu-2014-AA} for example), it is not clear $\inf_{n}\tau_{i,n}>0$ almost surely in stochastic setting. Therefore we are motivated to introduce the definition on the stability of the exiting time (see Definition \ref{Stability on exiting time}). 	
Then we find that  the property $\inf_{n}\tau_{i,n}>0$ can be connected with the stability property of the exiting time of the zero solution. Then we estimate the error in $H^{2s-\sigma}$ and $H^{\sigma}$ with suitable $\sigma$.  Different from \cite{Tang-2020-Arxiv}, the problem \eqref{SGCH problem} has nonlinearities of order $k+1$ and \eqref{non uniform remark equ} should depend on $k$.  Therefore more technical estimates are needed (see Lemma \ref{error estimate lemma} and \eqref{difference r's} for example).
	Finally we  use interpolation to derive \eqref{non uniform remark equ}. 
	Theorem \ref{weak instability} shows that one cannot expect too much for the issue of the dependence on initial data. More precisely, we cannot expect to improve the stability of the exiting time for the zero solution, and simultaneously to improve the continuous dependence of solutions on initial data. 
\end{Remark}

Finally, we focus on \eqref{GCH linear noise}.  For the issue of global existence, we have

\begin{Theorem}[Global existence: Case 1]\label{Decay result} Let $k\geq 1$ and $s>3/2$. Let $b(t)$ satisfy Assumption \ref{Assumption-3}  and $\s=(\Omega, \mathcal{F},\p,\{\mathcal{F}_t\}_{t\geq0}, W)$ be a fixed stochastic basis. Assume $u_0$ to be  an $H^s$-valued $\mathcal{F}_0$-measurable random variable satisfying $\E\|u_0\|^2_{H^s}<\infty$. Let $K$ be a constant such that $\|\cdot\|_{W^{1,\infty}}<K\|\cdot\|_{H^s}$. Then there is a $C=C(s)>1$ such that for any $R>1$ and $\lambda_1>2$, if $\|u_0\|_{H^s}<\frac{1}{RK}\left(\frac{b_*}{C\lambda_1}\right)^{1/k}$ almost surely,  then  \eqref{GCH linear noise} has a maximal pathwise solution $(u,\tau^*)$ satisfying  for any $\lambda_2>\frac{2\lambda_1}{\lambda_1-2}$ the estimate
	\begin{equation*}
	\p \left\{
	\|u(t)\|_{H^s}<\frac{1}{K}\left(\frac{b_*}{C\lambda_1}\right)^{1/k} 
	{\rm e}^{-\frac{\left((\lambda_1-2)\lambda_2-2\lambda_1\right)}{2\lambda_1\lambda2}\int_0^tb^2(t') {\rm d}t'}
	\ {\rm\ for\ all}\ t>0
	\right\}
	\geq 1-\left(\frac{1}{R}\right)^{2/\lambda_2}.
	\end{equation*}
	
\end{Theorem}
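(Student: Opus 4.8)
The plan is to reduce time-global existence to an a priori bound on the $H^s$-norm via the blow-up criterion, and then to exploit the It\^o correction produced by the linear noise to drive that norm down. By Theorem \ref{Local pathwise solution} there is a unique maximal pathwise solution $(u,\tau^*)$, and by \eqref{Blow-up criteria common} together with $\|\cdot\|_{W^{1,\infty}}<K\|\cdot\|_{H^s}$ it suffices to keep $\|u\|_{H^s}$ bounded: if $\|u\|_{H^s}$ stays bounded on $[0,\tau^*)$ then $\tau^*=\infty$. First I would apply It\^o's formula to $P(t):=\|u(t)\|_{H^s}^2=(D^su,D^su)_{L^2}$ along \eqref{GCH linear noise}. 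Writing $M(t):=\int_0^tb(s)\,{\rm d}W$ (so $[M]_t=\int_0^tb^2\,{\rm d}s$), this gives
\begin{equation*}
{\rm d}P=\Big[-2\big(u^k\partial_xu+F(u),\,u\big)_{H^s}+b^2(t)\,P\Big]{\rm d}t+2b(t)P\,{\rm d}W,
\end{equation*}
where the $b^2P\,{\rm d}t$ term is exactly the It\^o correction coming from the quadratic variation of the noise; this is the term that will ultimately produce decay. The energy estimate underlying Theorem \ref{Local pathwise solution} (commutator and product estimates applied to $u^k\partial_xu$ and to the three pieces of $F$) provides a constant $C=C(s)>1$ with $\big|\big(u^k\partial_xu+F(u),u\big)_{H^s}\big|\le C\,\|u\|_{W^{1,\infty}}^k\,\|u\|_{H^s}^2$.

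Next I would freeze the nonlinearity below a threshold chosen to match the constant $C$. Set $A:=\frac1K\big(\frac{b_*}{C\lambda_1}\big)^{1/k}$ and $\tau_A:=\inf\{t:\|u\|_{H^s}\ge A\}$; since $\|u_0\|_{H^s}<A/R<A$ almost surely, $\tau_A>0$. For $t<\tau_A$ one has $\|u\|_{W^{1,\infty}}^k<K^kA^k=b_*/(C\lambda_1)\le b^2(t)/(C\lambda_1)$, so the $\,{\rm d}t$-drift of $P$ is bounded above by $b^2\big(1+\tfrac{2}{\lambda_1}\big)P$ (here the factor $2$ from $-2(\cdots)_{H^s}$ combined with the same $C$ is what makes the constants line up). To convert this into decay of $\|u\|_{H^s}$ I would either apply It\^o to $\ln P$ (the correction contributes $-2b^2\,{\rm d}t$) or, to sidestep positivity, multiply $P$ by the reciprocal stochastic exponential $\mathcal D^{-1}(t)=\exp\!\big(-2M(t)+2\int_0^tb^2\,{\rm d}s\big)$; in either case the martingale terms cancel and Gr\"onwall's lemma yields, for $t\le\tau_A$,
\begin{equation*}
\|u(t)\|_{H^s}^2\le\|u_0\|_{H^s}^2\,\exp\!\Big(2M(t)-\big(1-\tfrac{2}{\lambda_1}\big)\!\int_0^tb^2\,{\rm d}s\Big).
\end{equation*}

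Finally I would control the martingale $M$ through the exponential supermartingale inequality. Since $\exp\!\big(\tfrac{2}{\lambda_2}M(t)-\tfrac{2}{\lambda_2^2}[M]_t\big)$ is a nonnegative supermartingale, Doob's maximal inequality gives $\p(G^c)\le R^{-2/\lambda_2}$ for the event
\begin{equation*}
G:=\Big\{\sup_{t\ge0}\big(M(t)-\tfrac1{\lambda_2}{\textstyle\int_0^t}b^2\,{\rm d}s\big)<\ln R\Big\}.
\end{equation*}
On $G$, inserting $M(t)<\ln R+\tfrac1{\lambda_2}\int_0^tb^2$ into the displayed estimate and using $\|u_0\|_{H^s}<A/R$ gives, for $t<\tau_A$,
\begin{equation*}
\|u(t)\|_{H^s}<A\,\exp\!\Big(-\tfrac{(\lambda_1-2)\lambda_2-2\lambda_1}{2\lambda_1\lambda_2}\!\int_0^tb^2\,{\rm d}s\Big)\le A,
\end{equation*}
since the exponent equals $-(\tfrac12-\tfrac1{\lambda_1}-\tfrac1{\lambda_2})\int_0^tb^2$, which is exactly the claimed rate and is negative precisely because $\lambda_2>\frac{2\lambda_1}{\lambda_1-2}$. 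A continuity/bootstrap argument then forces $\tau_A=\infty$ on $G$: were $\tau_A<\infty$, the strict inequality above (with $\int_0^{\tau_A}b^2\ge b_*\tau_A>0$) would contradict $\|u(\tau_A)\|_{H^s}=A$. Hence on $G$ the solution is global and obeys the asserted decay bound for all $t>0$, and $\p(G)\ge1-R^{-2/\lambda_2}$.

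I expect the main obstacle, beyond the standard It\^o/energy bookkeeping, to be twofold: first, arranging the constants so that below the threshold the drift coefficient is \emph{exactly} $b^2(1+2/\lambda_1)$ — this is what makes both the precise decay rate $\theta$ and the precise probability $1-R^{-2/\lambda_2}$ emerge — which forces the threshold $A$ to be tied to the very constant $C$ appearing in the nonlinear energy estimate for the higher-order terms $F_1,F_2,F_3$; and second, closing the self-referential bootstrap, since the decay estimate is only available while $\|u\|_{H^s}<A$, and one must argue on the good event $G$ that this same estimate prevents the threshold from ever being attained.
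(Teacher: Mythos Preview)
Your argument is correct and runs parallel to the paper's; the key inequality $\|u(t)\|_{H^s}\le\|u_0\|_{H^s}\exp\bigl(M(t)-\tfrac{\lambda_1-2}{2\lambda_1}\int_0^tb^2\bigr)$ below the threshold, and the supermartingale bound on $\p(G^c)$, are exactly what the paper obtains. The main procedural difference is the order of operations. The paper first performs the Girsanov-type substitution $v=\beta^{-1}u$ (Proposition \ref{pathwise solutions v}), turning \eqref{GCH linear noise} into the \emph{pathwise} random PDE \eqref{periodic Cauchy problem transform}; the $H^s$ energy estimate is then obtained by ordinary differentiation in $t$, and a further change $w=e^{-M(t)}u$ gives the decay inequality. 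Your ``multiply by the reciprocal stochastic exponential $\mathcal D^{-1}$'' is precisely this transform, applied after rather than before the energy estimate. The paper's ordering buys one thing: it bypasses the need to justify It\^{o}'s formula for $\|u\|^2_{H^s}$ directly---a step the paper itself flags (in the proof of Lemma \ref{blow-up criteria lemma}) as delicate because $u^ku_x\in H^{s-1}$ only, and handles there via the mollifier $T_\varepsilon$; your direct It\^{o} computation would need that same regularization inserted. For the probability estimate, your Doob maximal inequality for $\exp(\tfrac{2}{\lambda_2}M-\tfrac{2}{\lambda_2^2}[M])$ is exactly the paper's Lemma \ref{eta Lemma}\ref{exit time eta} (with $\lambda=\tfrac12-\tfrac1{\lambda_2}$, so $1-2\lambda=2/\lambda_2$), and the paper's pair $\tau_1,\tau_2$ with the conclusion $\p\{\tau_1\ge\tau_2\}=1$ is the same bootstrap you run on the event $G$.
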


\begin{Remark} Here we notice that if $k=1$, then $F_3(u)$ in \eqref{F decomposition} will disappear.
	Motivated by the recent papers \cite{GlattHoltz-Vicol-2014-AP,Rockner-Zhu-Zhu-2014-SPTA,Tang-2018-SIMA}, where the linear noise $\beta u {\rm d}W$ with $\beta\in\R\setminus\{0\}$ is considered,   we focus on the non-autonomous linear multiplicative noise case, namely \eqref{GCH linear noise}. We transform  \eqref{GCH linear noise} to a non-autonomous random system  \eqref{periodic Cauchy problem transform}. 
	Although the stochastic integral is absent in \eqref{periodic Cauchy problem transform}, to extend the deterministic results to the stochastic setting, we need to overcome a few technical difficulties since the system is not only random but also non-autonomous. In this work, we manage to gain some estimates and asymptotic limits of the Girsanov type processes (see e.g., \eqref{global time tau}, \eqref{tau 2 Girsanov}, \eqref{tau 1 > tau 2} and Lemma \ref{eta Lemma}), which enable us to apply the energy estimate pathwisely (namely for a.e. $\omega\in\Omega$) and obtain Theorem \ref{Decay result}. 
\end{Remark}

\begin{Theorem}[Global existence: Case 2]\label{Global existence result} Let $\s=(\Omega, \mathcal{F},\p,\{\mathcal{F}_t\}_{t\geq0}, W)$ be a fixed stochastic basis. 
	Let $s>3$ and $b(t)$ satisfy Assumption \ref{Assumption-3}. Assume $u_0$ to be  an $H^s$-valued, $\mathcal{F}_0$-measurable random variable satisfying $\E\|u_0\|^2_{H^s}<\infty$.	If  $u_0$ satisfies
	\begin{equation*} 
	\p\left\{(1-\partial_{xx}^2)u_0(x)>0,\ \forall x\in\T\right\}=p,\ \
	\p\left\{(1-\partial_{xx}^2)u_0(x)<0,\ \forall x\in\T\right\}=q,
	\end{equation*}
	for some $p,q\in[0,1]$, then the corresponding maximal pathwise solution $(u,\tau^*)$ to \eqref{GCH linear noise} satisfies
	\begin{equation*}
	\p\{\tau^*=\infty\}\geq p+q.
	\end{equation*}
	That is to say, $\p\left\{u\ {\rm exists\ globally}\right\}\geq p+q.$
\end{Theorem}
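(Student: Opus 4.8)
The plan is to strip out the stochastic integral by a Girsanov-type substitution, reduce \eqref{GCH linear noise} to a pathwise random transport problem, and then run the classical Camassa--Holm sign-preservation argument for the momentum density along the associated Lagrangian flow. First I set $\beta(t)=\exp\!\big(\int_0^t b(s)\,{\rm d}W-\tfrac12\int_0^t b^2(s)\,{\rm d}s\big)$, which under Assumption \ref{Assumption-3} is, by It\^o's formula, a strictly positive and $\p$-a.s.\ continuous process with ${\rm d}\beta=b\beta\,{\rm d}W$. Writing $u=\beta v$ and using that $\beta$ is spatially constant (so it commutes with $(1-\partial_{xx}^2)^{-1}$ and $\partial_x$) together with the fact that every term of $u^k\partial_xu+F(u)$ is homogeneous of degree $k+1$ in $u$, the $\mathrm{d}W$-terms cancel and $v$ solves the \emph{random, non-autonomous} equation $v_t+\beta^k(t)\big[v^k v_x+F(v)\big]=0$, $v(0)=u_0$. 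Since $\beta$ stays in a compact subset of $(0,\infty)$ on each finite interval a.s., Theorem \ref{Local pathwise solution} transfers to $v$ with the same maximal time $\tau^*$ and the same $W^{1,\infty}$-blow-up criterion \eqref{Blow-up criteria common}. Because $s>3$ we have $v(t)\in H^s\hookrightarrow C^2$, so $Y:=(1-\partial_{xx}^2)v=v-v_{xx}$ is a genuine continuous function and the manipulations below are classical for a.e.\ $\omega$.

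Next I derive the momentum transport law. Applying $(1-\partial_{xx}^2)$ to the $v$-equation, the precise coefficients $\tfrac{2k-1}{2}$ and $\tfrac{k-1}{2}$ in \eqref{F decomposition} produce the cancellation of the $v^{k-2}v_x^3$ terms and yield $Y_t+\beta^k v^k Y_x+(k+1)\beta^k v^{k-1}v_x\,Y=0$. I then introduce the flow $q$ through $\partial_t q(t,x)=\beta^k(t)\,v^k(t,q(t,x))$, $q(0,x)=x$; as $v\in C^2$ the velocity is $C^1$ in $x$, so $q(t,\cdot)$ is well defined on $[0,\tau^*)$, and $q_x$ solves $\partial_t q_x=k\beta^k v^{k-1}v_x\,q_x$, giving $q_x(t,x)=\exp\!\big(k\int_0^t\beta^k v^{k-1}v_x\,{\rm d}s\big)>0$, so $q(t,\cdot)$ is a circle diffeomorphism. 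Along $q$ one gets $\tfrac{{\rm d}}{{\rm d}t}Y(t,q)=-(k+1)\beta^k v^{k-1}v_x\,Y(t,q)$, hence $Y(t,q(t,x))=Y(0,x)\exp\!\big(-(k+1)\int_0^t\beta^k v^{k-1}v_x\,{\rm d}s\big)$. The exponential factor is positive and finite on $[0,\tau^*)$, so $Y(0,\cdot)=(1-\partial_{xx}^2)u_0$ being everywhere $>0$ (resp.\ $<0$) forces $Y(t,\cdot)>0$ (resp.\ $<0$) everywhere, and therefore $y:=(1-\partial_{xx}^2)u=\beta Y$ keeps that sign for all $t<\tau^*$.

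The remaining input is an a priori $W^{1,\infty}$-bound. I first show $\|v\|_{H^1}$ is conserved: since $(v_t,v)_{H^1}=(v_t,Y)_{L^2}=-\beta^k\int_\T Y\big(v^kv_x+F(v)\big)\,{\rm d}x$, an integration by parts shows that the coefficients in \eqref{F decomposition} again force $\int_\T Y\big(v^kv_x+F(v)\big)\,{\rm d}x=0$, so $\tfrac{{\rm d}}{{\rm d}t}\|v(t)\|_{H^1}^2=0$ and $\|u(t)\|_{H^1}=\beta(t)\|u_0\|_{H^1}$. Next, when $y$ has a fixed sign, the representation $u=G_{\T}*y$ together with the pointwise bound $|G_{\T}'|\le G_{\T}$ (immediate from \eqref{Helmboltz operator}, as $|\sinh|\le\cosh$) gives $\|u_x\|_{L^\infty}\le\|u\|_{L^\infty}$, so by the $1$-D Sobolev embedding $\|u\|_{W^{1,\infty}}\le 2\|u\|_{L^\infty}\le C\|u\|_{H^1}=C\beta(t)\|u_0\|_{H^1}$. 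Finally, on the disjoint events $A_+=\{(1-\partial_{xx}^2)u_0>0\ \forall x\}$ and $A_-=\{(1-\partial_{xx}^2)u_0<0\ \forall x\}$, of probabilities $p$ and $q$, this bound yields $\sup_{t\in[0,T]}\|u\|_{W^{1,\infty}}\le C\|u_0\|_{H^1}\sup_{t\in[0,T]}\beta(t)<\infty$ for every finite $T$, since $\beta$ is a.s.\ continuous. Were $\tau^*<\infty$ on this event, \eqref{Blow-up criteria common} would force $\limsup_{t\to\tau^*}\|u\|_{W^{1,\infty}}=\infty$, a contradiction; hence $\tau^*=\infty$ on $A_+\cup A_-$ and $\p\{\tau^*=\infty\}\ge\p(A_+\cup A_-)=p+q$.

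\textbf{The main difficulty} I anticipate is not the algebra but making the pathwise Lagrangian argument fully rigorous: transferring the local theory and the blow-up criterion from $u$ to $v$ across the random (yet a.s.\ locally bounded and bounded-below) factor $\beta$, and justifying the existence, regularity and sign identities of the flow $q$ for a.e.\ $\omega$. This is precisely where the hypothesis $s>3$ is needed, guaranteeing $u\in C^2$ so that $y$ is a continuous function whose sign is meaningful pointwise. The two cancellations underlying the $H^1$-conservation identity are the key structural facts that the higher-order nonlinearity in \eqref{F decomposition} is tuned to provide.
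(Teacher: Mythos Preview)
Your proposal is correct and follows essentially the same architecture as the paper: the Girsanov-type substitution $u=\beta v$ (the paper's Proposition \ref{pathwise solutions v}), the momentum equation $V_t+\beta^k v^k V_x+(k+1)\beta^k v^{k-1}v_x V=0$ and its sign preservation along the flow $q$ (the paper's Lemma \ref{same sign with initial data}), the $H^1$-conservation for $v$, and the pointwise bound $|v_x|\le|v|$ via the Green kernel (the paper's Lemma \ref{vx bounded lemma}) to control $\|u\|_{W^{1,\infty}}$ and invoke the blow-up criterion \eqref{Blow-up criteria common}.

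There is one small but genuine difference worth noting. To close the argument the paper invokes Lemma \ref{eta Lemma}\ref{eta->0} (a law-of-the-iterated-logarithm computation) to obtain $\sup_{t>0}\beta(\omega,t)<\infty$ almost surely. You instead use only that $\beta$ is a.s.\ continuous, hence bounded on $[0,\tau^*]$ whenever $\tau^*<\infty$, which already contradicts \eqref{Blow-up criteria common}. Your version is more elementary and entirely sufficient here; the paper's stronger statement that $\beta(t)\to0$ is not needed for this theorem. On the cosmetic side, your direct integration of $Y$ along characteristics is equivalent to the paper's conserved quantity $V(t,q)\,q_x^{(k+1)/k}$, and your inequality $|G_{\T}'|\le G_{\T}$ is the same content as the paper's explicit formulas for $v\pm v_x$.
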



\section{Proof for Theorem \ref{Local pathwise solution}}

\subsection{Blow-up criteria}
Let us postpone the proof for existence and uniqueness of solutions to \eqref{SGCH problem} to Section \ref{Existence and uniqueness}. We will first prove  the blow-up criteria,  since some  estimates will be used later. Motivated by \cite{Crisan-Flandoli-Holm-2018-JNS}, we first consider the relationship between the blow-up time of $\|u(t)\|_{H^s}$ and the blow-up time of $\|u(t)\|_{W^{1,\infty}}$ for \eqref{SGCH problem}. Even though one might expect that the $\|u(t)\|_{H_s}$ norm may blow up earlier than $\|u(t)\|_{W^{1,\infty}}$, the following result shows that this is not true.

\begin{Lemma}\label{blow-up criteria lemma}
	Let $(u,\tau^*)$ be the unique maximal pathwise solution to \eqref{SGCH problem}. Then the real-valued stochastic process $\|u\|_{W^{1,\infty}}$ is also $\mathcal{F}_t$-adapted. Besides, for any $m,n\in\Z^{+}$, define
	\begin{align*}
	\tau_{1,m}=\inf\left\{t\geq0: \|u(t)\|_{H^s}\geq m\right\},\ \ \
	\tau_{2,n}=\inf\left\{t\geq0: \|u(t)\|_{W^{1,\infty}}\geq n\right\}.
	\end{align*}
	Moreover, let  $\displaystyle\tau_1=\lim_{m\rightarrow\infty}\tau_{1,m}$ and $\displaystyle\tau_2=\lim_{n\rightarrow\infty}\tau_{2,n}$,  then we have
	$$
	\tau_{1}=\tau_{2} \ \ \p-a.s.
	$$
	As a corollary, 
	$\textbf{1}_{\left\{\lim_{t\rightarrow \tau^*}\|u(t)\|_{W^{1,\infty}}=\infty\right\}}=\textbf{1}_{\{\tau^*<\infty\}}$ $\p-a.s.$
\end{Lemma}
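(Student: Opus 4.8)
The plan is to prove the two inequalities $\tau_1 \le \tau_2$ and $\tau_1 \ge \tau_2$ separately. The first is a soft consequence of the Sobolev embedding, while the second is the heart of the lemma and rests on an $H^s$ energy estimate that closes at the level of the $W^{1,\infty}$-norm. I would first dispose of the measurability assertions. Since $s>3/2$, the embedding $H^s\hookrightarrow W^{1,\infty}$ is continuous, so $u\mapsto\|u\|_{W^{1,\infty}}$ is a continuous functional on $H^s$; composing it with the $\mathcal F_t$-adapted, $H^s$-continuous process $u(\cdot\wedge\tau^*)$ produces an adapted, continuous, real-valued process $t\mapsto\|u(t)\|_{W^{1,\infty}}$. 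Consequently each $\tau_{2,n}$ is the first time a continuous adapted process reaches a level and is therefore a genuine stopping time (as is each $\tau_{1,m}$), and all the $\tau_{i,\cdot}$ are nondecreasing in their index, so the limits $\tau_1,\tau_2$ are well defined.

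For the direction $\tau_1\le\tau_2$, let $K$ denote the embedding constant, $\|v\|_{W^{1,\infty}}\le K\|v\|_{H^s}$. If $t<\tau_{1,m}$ then $\|u(t)\|_{H^s}<m$, hence $\|u(t)\|_{W^{1,\infty}}<Km$, so $t<\tau_{2,\lceil Km\rceil}$. Thus $\tau_{1,m}\le\tau_{2,\lceil Km\rceil}\le\tau_2$, and letting $m\to\infty$ gives $\tau_1\le\tau_2$ almost surely.

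The reverse inequality $\tau_1\ge\tau_2$ is where the real work lies. I would fix $n$ and show $\tau_1\ge\tau_{2,n}$, then let $n\to\infty$. Applying $D^s$ to \eqref{SGCH problem} and running the It\^o formula for $\|u\|_{H^s}^2=\|D^su\|_{L^2}^2$, the crucial point is that the top-order contributions are controlled \emph{without} losing a derivative. For the transport term I would write $D^s(u^k\partial_xu)=u^k\partial_xD^su+[D^s,u^k]\partial_xu$; an integration by parts converts $(u^k\partial_xD^su,D^su)_{L^2}$ into $-\tfrac12(\partial_x(u^k)D^su,D^su)_{L^2}$, bounded by $\|\partial_x(u^k)\|_{L^\infty}\|u\|_{H^s}^2$, whereas the commutator is handled by the Kato--Ponce estimate $\|[D^s,f]g\|_{L^2}\lesssim\|\partial_xf\|_{L^\infty}\|g\|_{H^{s-1}}+\|f\|_{H^s}\|g\|_{L^\infty}$ together with the Moser bound $\|u^k\|_{H^s}\lesssim\|u\|_{L^\infty}^{k-1}\|u\|_{H^s}$. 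Since the operators $(1-\partial_{xx}^2)^{-1}\partial_x$ and $(1-\partial_{xx}^2)^{-1}$ appearing in \eqref{F decomposition} smooth by one and two orders respectively, the three pieces of $F(u)=F_1(u)+F_2(u)+F_3(u)$ satisfy $\|F(u)\|_{H^s}\lesssim P(\|u\|_{W^{1,\infty}})\|u\|_{H^s}$ for a polynomial $P$ depending on $k$, again via Moser-type product estimates. The It\^o correction is controlled directly by \eqref{assumption 1 for h}, which gives $\|h(t,u)\|_{L_2(U;H^s)}^2\le f(\|u\|_{W^{1,\infty}})^2(1+\|u\|_{H^s})^2$. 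Collecting everything, on the stochastic interval $[0,\tau_{2,n})$ where $\|u\|_{W^{1,\infty}}\le n$ one arrives at
$$ \mathrm{d}\|u\|_{H^s}^2\le C(n)\bigl(1+\|u\|_{H^s}^2\bigr)\,\mathrm{d}t+\mathrm{d}M_t, $$
with $M_t$ a local martingale and $C(n)$ depending on $P(n)$ and $f(n)$.

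To conclude, I would localize at $\tau_{2,n}\wedge\tau_{1,m}\wedge T$, take expectations to annihilate the martingale, and use the Burkholder--Davis--Gundy inequality together with Gronwall's lemma to obtain a bound on $\E\sup_{[0,\tau_{2,n}\wedge\tau_{1,m}\wedge T]}\|u\|_{H^s}^2$ that is \emph{uniform in $m$}. This uniform bound prevents $\|u(t)\|_{H^s}$ from reaching arbitrarily large levels before $\tau_{2,n}\wedge T$, forcing $\tau_{1,m}>\tau_{2,n}\wedge T$ for large $m$, hence $\tau_1\ge\tau_{2,n}\wedge T$ almost surely; sending $T\to\infty$ and then $n\to\infty$ gives $\tau_1\ge\tau_2$, and with the easy direction this yields $\tau_1=\tau_2$ almost surely. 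The corollary is then immediate: maximality of $(u,\tau^*)$ forces the $H^s$-norm to blow up at $\tau^*$ on $\{\tau^*<\infty\}$, so $\tau_1=\tau^*$, whence $\tau_2=\tau^*$ as well and the $W^{1,\infty}$-norm blows up exactly on $\{\tau^*<\infty\}$, which is \eqref{Blow-up criteria common}. I expect the main obstacle to be precisely this energy estimate: verifying that every nonlinear contribution, including the genuinely higher-order terms $u^k\partial_xu$ and $F_3(u)=\tfrac{k-1}{2}(1-\partial_{xx}^2)^{-1}(u^{k-2}u_x^3)$, closes at the level $P(\|u\|_{W^{1,\infty}})\|u\|_{H^s}^2$ and never produces an $\|u\|_{H^{s+1}}$ factor, which is exactly where the commutator structure and the smoothing of the Helmholtz operator are indispensable.
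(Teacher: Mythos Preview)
Your overall strategy matches the paper's: the soft direction $\tau_1\le\tau_2$ via Sobolev embedding, and the hard direction via an $H^s$ energy estimate that closes at the $W^{1,\infty}$ level, followed by a Gronwall argument. The commutator/integration-by-parts treatment of $u^k\partial_xu$ and the smoothing-operator bounds for $F(u)$ are also essentially what the paper does (Lemmas \ref{Kato-Ponce commutator estimate}, \ref{Moser estimate}, \ref{F lemma}).

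However, there is a genuine gap that the paper explicitly flags and works around: you write ``applying $D^s$ and running the It\^o formula for $\|u\|_{H^s}^2$,'' but this is not justified as stated. The drift term $u^k\partial_xu$ lies only in $H^{s-1}$, not $H^s$, so the standard It\^o formula in Hilbert space (e.g.\ \cite[Theorem 4.32]{Prato-Zabczyk-2014-Cambridge}) does not apply to the functional $\|\cdot\|_{H^s}^2$ on this process. The paper overcomes this by introducing the mollifier $T_\varepsilon=(1-\varepsilon^2\Delta)^{-1}$, applying It\^o to $\|T_\varepsilon u\|_{H^s}^2$ (which \emph{is} legitimate since $T_\varepsilon$ gains regularity), deriving estimates uniform in $\varepsilon$, and then sending $\varepsilon\to0$. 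This introduces an extra commutator $[T_\varepsilon,u^k]D^s u_x$ beyond the Kato--Ponce commutator you account for, and controlling it requires a separate bound (Lemma \ref{Te commutator}). So the obstacle is not only that the nonlinear terms must close without losing a derivative; it is that the It\^o calculus itself must first be made rigorous, and doing so produces an additional term you have not treated.
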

\begin{proof}
	To begin with, we see that $u(\cdot\wedge \tau)\in C([0,\infty);H^s)$ means that for any $t\in[0,\tau]$,
	$$[u(t)]^{-1}(Y)=[u(t)]^{-1}(H^s\cap Y),\ \forall\ Y\in\B(W^{1,\infty}).$$ Therefore $u(t)$, as a $W^{1,\infty}$-valued process, is also $\mathcal{F}_t$-adapted. We then infer from the embedding $H^s\hookrightarrow W^{1,\infty}$ for $s>3/2$ that for some $M>0$ and $m\in\N$,
	\begin{align*}
	\sup_{t\in[0,\tau_{1,m}]}\|u(t)\|_{W^{1,\infty}}\leq M\sup_{t\in[0,\tau_{1,m}]}\|u(t)\|_{H^s}
	\leq ([M]+1)m,
	\end{align*}
	where $[M]$ means the integer part of $M$. Therefore we have
	$\tau_{1,m}\leq\tau_{2,([M]+1)m}\leq \tau_2$ $\p-a.s.,$
	which means that
	$\tau_{1}\leq \tau_2$ $\p-a.s.$
	Now we only need to prove $\tau_{2}\leq \tau_1$ $\p-a.s.$ It is easy to see that for all $n_1,n_2\in\Z^{+}$,
	\begin{align*}
	\left\{\sup_{t\in[0,\tau_{2,n_1}\wedge n_2]}\|u(t)\|_{H^s}<\infty\right\}
	=\bigcup_{m\in\Z^{+}}\left\{\sup_{t\in[0,\tau_{2,n_1}\wedge n_2]}\|u(t)\|_{H^s}<m\right\}
	\subset\bigcup_{m\in\Z^{+}}\left\{\tau_{2,n_1}\wedge n_2\leq\tau_{1,m}\right\}.
	\end{align*}
	Since $$\bigcup_{m\in\Z^{+}}\left\{\tau_{2,n_1}\wedge n_2\leq\tau_{1,m}\right\}\subset\left\{\tau_{2,n_1}\wedge n_2\leq\tau_{1}\right\},$$
	we see that if 
	\begin{align}
	\p\left\{\sup_{t\in[0,\tau_{2,n_1}\wedge n_2]}\|u(t)\|_{H^s}<\infty\right\}=1,\ \
	\forall\ n_1,n_2\in\Z^{+}\label{tau2<tau1 condition}
	\end{align}
	holds true, then for all $n_1,n_2\in\Z^{+}$, $\p\left\{\tau_{2,n_1}\wedge n_2\leq\tau_{1}\right\}=1$ and
	\begin{align}
	\p\left\{\tau_2\leq\tau_1\right\}
	=\p\left\{\bigcap_{n_1\in\Z^{+}}\left\{\tau_{2,n_1}\leq \tau_{1}\right\}\right\}
	=\p\left\{\bigcap_{n_1,n_2\in\Z^{+}}\left\{\tau_{2,n_1}\wedge n_2\leq \tau_{1}\right\}\right\}=1.\label{tau2<tau1}
	\end{align}
	Since \eqref{tau2<tau1} requires the assumption \eqref{tau2<tau1 condition}, it suffices to prove \eqref{tau2<tau1 condition}. However, we can not directly apply the It\^{o} formula for $\|u\|^2_{H^s}$ to get control of $\E\|u(t)\|_{H^s}^2$ since we only have $u\in H^s$ and $u^ku_x\in H^{s-1}$.
%
 Therefore the well known It\^{o} formula in general Hilbert space can not be used directly, see \cite[Theorem 4.32]{Prato-Zabczyk-2014-Cambridge} or \cite[Theorem 2.10]{Gawarecki-Mandrekar-2010-Springer}  for example. 
	We will use the mollifier operator $T_\e$ defined in the appendix to overcome this obstacle. Indeed, applying $T_\e$ to \eqref{SGCH problem} and using the It\^{o} formula for $\|T_\e u\|^2_{H^s}$, we have that for any $t>0$,	
	\begin{align}
	{\rm d}\|T_\e u(t)\|^2_{H^s}
	=& \left(T_\e h(t,u){\rm d}\W,T_\e u\right)_{H^s}
	-2\left(D^sT_\e\left[u^ku\right],D^sT_\e u\right)_{L^2}{\rm d}t\notag\\
	&-2\left(D^sT_\e F(u),D^sT_\e u\right)_{L^2}{\rm d}t
	+ \|T_\e h(t,u)\|_{L_2(U; H^s)}^2{\rm d}t.\notag
	\end{align}
	Therefore 
	for any $n_1,n_2\geq 1$ and $t\in[0,\tau_{2,n_1}\wedge n_2]$,
	\begin{align*}
	\|T_\e u(t)\|^2_{H^s}-\|T_\e u(0)\|^2_{H^s}
	=&2\sum_{j=1}^{\infty}\int_0^{t}
	\left(D^sT_\e h(t,u)e_j,D^sT_\e u\right)_{L^2}{\rm d}W_j\nonumber\\
	&-2\int_0^{t}
	\left(D^sT_\e 
	\left[u^k\partial_xu\right],D^sT_\e u\right)_{L^2}{\rm d}t\nonumber\\
	&-2\int_0^{t}
	\left(D^sT_\e F(u),D^sT_\e u\right)_{L^2}{\rm d}t\nonumber\\
	&+\int_0^{t}
	\sum_{k=1}^\infty\|D^sT_\e h(t,u)e_j\|_{L^2}^2{\rm d}t\nonumber\\
	=&\int_0^{t}\sum_{j=1}^{\infty} L_{1,j}{\rm d}W_j+\sum_{i=2}^4\int_0^tL_i{\rm d}t,
	\end{align*}
	where $\{e_k\}$ is the complete orthonormal basis of $U$.
	On account of the Burkholder-Davis-Gundy inequality, we arrive at 
	\begin{align*}
	\E\sup_{t\in[0,\tau_{2,n_1}\wedge n_2]}\|T_\e u(t)\|^2_{H^s}\leq\E\|T_\e u_0\|^2_{H^s}
	+C\E\left(\int_0^{\tau_{2,n_1}\wedge n_2}\left|\sum_{j=1}^{\infty} L_{1,j}\right|^2{\rm d}t\right)^{\frac12}
	+\sum_{i=2}^4\E\int_0^{\tau_{2,n_1}\wedge n_2}|L_i|{\rm d}t.
	\end{align*}
	Then \eqref{assumption 1 for h}, \eqref{mollifier property 5} and the stochastic Fubini theorem \cite{Prato-Zabczyk-2014-Cambridge} lead to
	\begin{align*}
	\E\left(\int_0^{\tau_{2,n_1}\wedge n_2}\left|\sum_{j=1}^{\infty} L_{1,j}\right|^2{\rm d}t\right)^{\frac12}
	\leq& \frac12\E\sup_{t\in[0,\tau_{2,n_1}\wedge n_2]}\|T_\e u\|_{H^s}^2
	+Cf^2(n_1)\int_0^{n_2}\left(1+\E\|u\|_{H^s}^2\right){\rm d}t.
	\end{align*}
	For $L_2$,  we notice that $T_\e$ satisfies \eqref{mollifier property 3}, \eqref{mollifier property 4} and \eqref{mollifier property 5}. Then it follows from Lemmas   \ref{Te commutator}, \ref{Kato-Ponce commutator estimate} and \ref{Moser estimate}, integration by parts and $H^s\hookrightarrow W^{1,\infty}$ that
	\begin{align*}
	&\left(D^sT_\e 
	\left[u^ku_x\right],D^sT_\e u\right)_{L^2}\notag\\
	=&\left(\left[D^s,
	u^k\right]u_x,D^sT^2_\e u\right)_{L^2}+
	\left([T_\e,u^k]D^su_x, D^sT_\e u\right)_{L^2}
	+\left(u^kD^sT_\e u_x, D^sT_\e u\right)_{L^2}\notag\\
	\leq&  C\|u\|^{k}_{W^{1,\infty}}\|u\|^2_{H^s},
	\end{align*}
	which implies
	\begin{align*}
	\E\int_0^{\tau_{2,n_1}\wedge n_2}|L_2|\ {\rm d}t\leq Cn^k_1\int_0^{n_2}\left(1+\E\|u\|_{H^s}^2\right){\rm d}t.
	\end{align*}
	Similarly, it follows from Lemma \ref{F lemma} and the assumption \eqref{assumption 1 for h} that
	\begin{align*}
	\E\int_0^{\tau_{2,n_1}\wedge n_2}|L_3|+|L_4|\ {\rm d}t\leq C(n_1^k+f^2(n_1))\int_0^{n_2}\left(1+\E\|u\|_{H^s}^2\right){\rm d}t.
	\end{align*}
	Therefore we combine the above estimates with using \eqref{mollifier property 5} to have
	\begin{align*}
	\E\sup_{t\in[0,\tau_{2,n_1}\wedge n_2]}\|T_\e u(t)\|^2_{H^s}
	\leq C\E\|u_0\|^2_{H^s}+ C\int_0^{n_2}\left(1+\E\sup_{t'\in[0,t\wedge\tau_{2,n_1}]}\|u(t')\|_{H^s}^2\right){\rm d}t,
	\end{align*}
	where $C=C(n_1,k)$ through $n_1^k$ and $n_1^k+f^2(n_1)$.   Since the right hand side of the above estimate does not depend on $\e$, and
	$T_\e u$ tends to $u$ in $C\left([0,T],H^{s}\right)$ for any $T>0$ almost surely as $\e\rightarrow0$, one can send $\e\rightarrow0$ to obtain
	\begin{align}\label{u Hs estimate}
	\E\sup_{t\in[0,\tau_{2,n_1}\wedge n_2]}\|u(t)\|^2_{H^s}
	\leq C\E\|u_0\|^2_{H^s}+ C\int_0^{n_2}\left(1+\E\sup_{t'\in[0,t\wedge\tau_{2,n_1}]}\|u(t')\|_{H^s}^2\right){\rm d}t.
	\end{align}
	Then  Gronwall's inequality shows that for each $n_1,n_2\in\Z^{+}$, there is a constant $C=C(n_1,n_2,u_0,k)>0$ such that $$\E\sup_{t\in[0,\tau_{2,n_1}\wedge n_2]}\|u(t)\|^2_{H^s}<C(n_1,n_2,u_0,k),$$ which gives \eqref{tau2<tau1 condition}.
\end{proof}
	
	\begin{proof}[Proof for \eqref{Blow-up criteria common}]
	Let us assume the existence and uniqueness first. We notice that for fixed $m,n>0$, even if $\p\{\tau_{1,m}=0\}$ or $\p\{\tau_{2,n}=0\}$ is larger than $0$, for a.e. $\omega\in\Omega$, there is $m>0$ or $n>0$ such that $\tau_{1,m},\tau_{2,n}>0$. By continuity of $\|u(t)\|_{H^s}$ and the uniqueness of $u$, it is easy to check that $\tau_1=\tau_2$ is actually the maximal existence time $\tau^*$ of $u$ in the sense of Definition \ref{pathwise solution definition}. Consequently, we obtain the desired blow-up criteria.
\end{proof}

\subsection{Sketch of the proof for Theorem \ref{Local pathwise solution}}\label{Existence and uniqueness}

Since the main idea of proving Theorem \ref{Local pathwise solution} follows standard ideas and it is very similar to \cite{Tang-2018-SIMA,Tang-2020-Arxiv}, here we only give a sketch of the main steps.

\begin{enumerate}

	\item (Approximate solutions) The first step is to construct a suitable approximation scheme. For any $R>1$, we let $\chi_R(x):[0,\infty)\rightarrow[0,1]$ be a $C^{\infty}$ function such that $\chi_R(x)=1$ for $x\in[0,R]$ and $\chi_R(x)=0$ for $x>2R$.
	Then we consider the following cut-off problem on $\T$,
	\begin{equation} \label{cut-off problem}
	\left\{\begin{aligned}
	&{\rm d}u+\chi_R(\|u\|_{W^{1,\infty}})\left[u^k\partial_xu+F(u)\right]{\rm d}t=\chi_R(\|u\|_{W^{1,\infty}})h(t,u){\rm d}\W,\\
	&u(\omega,0,x)=u_0(\omega,x)\in H^{s}.
	\end{aligned} \right.
	\end{equation}
	From Lemma \ref{F lemma}, we see that the nonlinear term $F(u)$ preserves the $H^s$-regularity of $u\in H^s$ for any $s>3/2$. However, to apply the theory of SDEs in Hilbert space to \eqref{cut-off problem}, we will have to mollify the transport term $u^k\partial_xu$ since the product $u^k\partial_xu$ loses one regularity. To this end, we consider the following approximation scheme:
	\begin{equation} \label{approximate problem}
	\left\{\begin{aligned}
	{\rm d}u+G_{1,\e}(u){\rm d}t&=G_{2}(u){\rm d}\W,\ \ x\in\T,\ t>0,\\
	G_{1,\e}(u)&=\chi_R(\|u\|_{W^{1,\infty}})\left[J_{\varepsilon}
	\left((J_{\varepsilon}u)^k\partial_xJ_{\varepsilon}u\right)+F(u)\right],\\
	G_{2}(u)&=\chi_R(\|u\|_{W^{1,\infty}})h(t,u),\\
	u(0,x)&=u_0(x)\in H^{s}(\T),
	\end{aligned} \right.
	\end{equation}
	where $J_{\e}$ is the Friedrichs mollifier defined in Appendix \ref{appendix}. After mollifying the transport term $u^ku_x$,  for a fixed stochastic basis $\s=(\Omega, \mathcal{F},\p,\{\mathcal{F}_t\}_{t\geq0}, W)$ and for $u_0\in L^2(\Omega;H^s)$ with $s>3$, according to the existence theory of SDE in Hilbert space  (see for example \cite[Theorem 4.2.4 with Example 4.1.3]{Prevot-Rockner-2007-book} and \cite{Kallianpur-Xiong-1995-book}), \eqref{approximate problem} admits a unique solution $u_{\e}\in C([0,T_\e),H^s)$ $\p-a.s.$  Moreover, the uniform $L^{\infty}(\Omega;W^{1,\infty})$ condition provided by the cut-off function $\chi_R$ enables us to split the expectation $\E(\|u_\e\|_{H^{s}}^2\|u_\e\|_{W^{1,\infty}})$ to close the  {\it a priori} $L^2(\Omega;H^s)$ estimate for $u_\e$.    Then we can go along the lines as we prove Lemma \ref{blow-up criteria lemma} to find that for each fixed $\e$, if $T_\e<\infty$, then
	$
	\limsup_{t\rightarrow T_\e}\|u_\e(t)\|_{W^{1,\infty}}=\infty$.
	Due to the cut-off in \eqref{approximate problem}, for a.e. $\omega\in\Omega$, $\|u_\e(t)\|_{W^{1,\infty}}$ is always bounded and hence $u_{\e}$ is actually a global in time solution, that is, $u_{\e}\in C([0,\infty),H^s)$ $\p-a.s.$ We remark here that the global existence of $u_\e$ is necessary in our framework due to the lack of lifespan estimate in the stochastic setting. Otherwise we will have to prove $\p\{\inf_{\e>0}T_\e>0\}=1$, which is  not clear in general.
	
	\item (Pathwise solution to the cut-off problem in $H^s$ with $s>3$) We pass to the limit $\e\rightarrow0$. By applying the stochastic compactness arguments from Prokhorov's  and Skorokhod's theorems, we  obtain the almost sure convergence for a new approximate solution $(\widetilde{u_{\e}},\widetilde{W_{\e}})$ defined on a \emph{new} probability space. By virtue of a refined martingale representation theorem \cite[Theorem A.1]{Hofmanova-2013-SPTA},
	we may send $\e\rightarrow0$ in $(\widetilde{u_{\e}},\widetilde{W_{\e}})$ to build a global martingale solution in $H^s$ with $s>3$ to the cut-off problem. Finally, since $F$ satisfies the estimates as in Lemma \ref{F lemma} and $h$ satisfies Assumption \ref{Assumption-1}, one can obtain the pathwise uniqueness easily.  Then the  Gy\"{o}ngy--Krylov characterization \cite{Gyongy-Krylov-1996-PTRF} of the convergence in probability can be applied here to prove the convergence of the original approximate solutions. For more details, we refer to \cite{Tang-2018-SIMA,Tang-2020-Arxiv}. 
%

	\item (Remove the cut-off and extend the range of $s$ to $s>3/2$) When $u_0\in L^{\infty}(\Omega,H^s)$ with $s>3/2$, by mollifying the initial data, we obtain a sequence of regular solutions $\{u_n\}_{n\in\N}$ to \eqref{SGCH problem}. Motivated by \cite{GlattHoltz-Ziane-2009-ADE,Tang-2018-SIMA}, one can prove that there is a subsequence (still denoted by $u_n$) such that for some almost surely positive stopping time $\tau$,
	\begin{equation*}
	\lim_{n\rightarrow\infty}\sup_{t\in[0,\tau]}\|u_{n}-u\|_{H^s}=0\ \ \p-a.s.,
	\end{equation*}
	and
	\begin{equation}\label{u bound by u0}
	\sup_{t\in[0,\tau]}\|u\|_{H^s}\leq \|u_0\|_{H^s}+2 \ \ \p-a.s.
	\end{equation}
	Then we can pass the limit $n\rightarrow\infty$ to prove that $(u,\tau)$ is a solution to \eqref{SGCH problem}. Besides, using a cutting argument, as in \cite{GlattHoltz-Vicol-2014-AP,GlattHoltz-Ziane-2009-ADE,Breit-Feireisl-Hofmanova-2018-CPDE}, enables us to remove the $L^\infty(\Omega)$ assumption on $u_0$. More precisely, when $\E\|u_0\|^2_{H^s}<\infty$, we consider the decomposition $$\Omega_m=\{m-1\leq\|u_0\|_{H^s}<m\},\ m\geq1.$$
	Since $\E\|u_0\|^2_{H^s}<\infty$,  $\bigcup_{m\geq1}\Omega_m$ is a set of full measure and $1=\sum_{m\geq1}\textbf{1}_{\Omega_m}$ $\p-a.s.$ Therefore we have
	$$u_0(\omega,x)=\sum_{m\geq1}u_{0,m}(\omega,x)
	=\sum_{m\geq1}u_0(\omega,x)\textbf{1}_{\Omega_m}\ \ \p-a.s.$$ For each initial value $u_{0,m}$, we let $(u_m,\tau_m)$ be the pathwise unique solution to \eqref{SGCH problem} satisfying \eqref{u bound by u0}.   Moreover, as $F(0)=0$ and $h(0)=0$ (cf. \eqref{assumption 1 for h}), direct computation shows that
	\begin{equation*}
	\left(u=\sum_{m\geq1}u_m\textbf{1}_{m-1\leq\|u_0\|_{H^s}<m},\ \
	\tau=\sum_{m\geq1}\tau_m\textbf{1}_{m-1\leq\|u_0\|_{H^s}<m}\right)
	\end{equation*}
	is the unique pathwise solution to \eqref{SGCH problem} corresponding to the initial condition $u_0$. Since $(u_m,\tau_m)$ satisfies \eqref{u bound by u0}, we have $u(\cdot\wedge \tau)\in C\left([0,\infty);H^s\right)$ $\p-a.s.$ and
	\begin{align*}
	\sup_{t\in[0,\tau]}\|u\|_{H^s}^2
	=&\sum _ { m= 1 }^{ \infty }\textbf{1}_{m-1\leq\|u_0\|_{H^s}<m} 
	\sup _ { t \in \left[ 0 , \tau _ { m } \right] } \left\| u _ { m } \right\|_{H^s} ^ { 2 }\\
	\leq &
		C \sum _ { m =1} ^ { \infty } \textbf{1}_{m-1\leq\|u_0\|_{H^s}<m}  
		\left( 4+ \left\| u_{0,m} \right\|_{H^s} ^ { 2 } \right)
	=C\left( 4+ \left\| u_{0} \right\|_{H^s} ^ { 2 } \right).
	\end{align*}
	Taking expectation in the above inequality, we obtain \eqref{L2 moment bound}. Since the passage from $(u,\tau)$ to a unique maximal pathwise solution $(u,\tau^*)$ in the sense of Definition \ref{pathwise solution definition} can be carried out as in  \cite{Crisan-Flandoli-Holm-2018-JNS,GlattHoltz-Vicol-2014-AP,GlattHoltz-Ziane-2009-ADE,Rockner-Zhu-Zhu-2014-SPTA}, we omit the details. The proof for Theorem \ref{Local pathwise solution} is finished.

\end{enumerate}

\begin{Remark}\label{Remark for assumption}
	Changing the  growth assumption \eqref{assumption 1 for h}  to  $\|h(t,u)\|_{L_2(U; H^s)}\leq f(\|u\|_{H^{s'}})(1+\|u\|_{H^s})$ with $H^s\hookrightarrow H^{s'}\hookrightarrow W^{1,\infty}$ leads to  a criticality statement.   One can then 
	go along the lines in Lemma \ref{blow-up criteria lemma} to find $\textbf{1}_{\{\tau^*<\infty\}}=\textbf{1}_{\left\{\limsup_{t\rightarrow \tau^*}\|u(t)\|_{H^{s'}}=\infty\right\}}$ $\p-a.s.$ 
	 Using another cut-off $\chi_{R}(\|u\|_{H^{s'}})$ in the approximate scheme \eqref{cut-off problem} and  in \eqref{approximate problem}, the other part of the proof  also allows us  to establish a  local existence and uniqueness result. 
	  The difference condition \eqref{assumption 2 for h}  is essential to guarantee pathwise uniqueness.
\end{Remark}
%
%
%

\section{Proof for Theorem \ref{weak instability}}\label{Weak non-stability}

Now we are going to prove Theorem \ref{weak instability}. To this end, it suffices to show that if for some $R_0\gg1$, the $R_0$-exiting time is strongly stable at the zero solution, then the solution map $u_0\mapsto u$  is not uniformly continuous. 
We restrict our attention to $k\geq2$ since the case $k=1$, i.e., the stochastic CH equation, has been obtained in \cite{Tang-2020-Arxiv}. 

\subsection{Approximate solutions and associated estimates}
Define the approximate solutions as
\begin{equation*}
u^{l, n}=l n^{-\frac{1}{k}}+n^{-s}\cos \theta \ \text{and}\ \theta=nx-l t, \ n\in\Z^{+},
\end{equation*}
where
\begin{equation}\label{l n condition}
l\in\{-1,1\}\ \text{if}\ k\ \text{is odd};\ \  l\in\{0,1\} \ \text{if}\ k\ \text{is even}.
\end{equation}
Substituting $u^{l,n}$ into \eqref{SGCH problem}, we see that the error $E^{l, n}(t)$ can be defined as
\begin{align}\label{error definition}
E^{l, n}(t)=u^{l, n}(t)-u^{l, n}(0)+\int_{0}^{t}\left[\left(u^{l, n}\right)^k\partial_x u^{l, n}+F(u^{l, n})\right]{\rm d}t'-\int_{0}^{t}h(t',u^{l, n}){\rm d}\W.
\end{align}
Now we analyze the error as follows.

\begin{Lemma}\label{error estimate lemma}
	Let $s>3/2$. For $n\gg 1$, $\delta\in(1/2,\min\{s-1,3/2\})$ and any $T>0$, there is a $C=C(T)>0$ such that
	\begin{align*}
	\E\sup_{t\in[0,T]}\|E^{l, n}(t)\|^2_{H^\delta}\leq
	Cn^{-2r_s}.
	\end{align*}
	Here $r_{s}$ is a parameter with
	\begin{equation}\label{rs>0}
	0<r_s=
	\begin{cases}
	\displaystyle
	2s-\delta-\frac{k+1}{k}, & {\rm if}  \ \displaystyle\frac{3}{2}<s\leqslant \frac{2k+1}{k},\\
	s-\delta+1, &  {\rm if} \  \displaystyle s>\frac{2k+1}{k}.
	\end{cases}
	\end{equation}
\end{Lemma}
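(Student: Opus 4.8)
The plan is to split the error \eqref{error definition} into its drift and martingale parts and to show that the special algebraic form of $u^{l,n}$ forces the leading-order terms of the drift to cancel, so that only genuinely lower-order (in $n$) remainders survive. Writing $\theta=nx-lt$ and using $u^{l,n}(t)-u^{l,n}(0)=\int_0^t\partial_{t}u^{l,n}\,{\rm d}t'$ with $\partial_t u^{l,n}=ln^{-s}\sin\theta$ and $\partial_x u^{l,n}=-n^{1-s}\sin\theta$, I would first record the key cancellation: since $l\in\{-1,0,1\}$ satisfies $l^k=l$ by the parity constraint \eqref{l n condition}, the constant part $ln^{-1/k}$ of $u^{l,n}$ contributes $(ln^{-1/k})^k\partial_x u^{l,n}=ln^{-1}\cdot(-n^{1-s}\sin\theta)=-ln^{-s}\sin\theta$, which exactly annihilates $\partial_t u^{l,n}$. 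Consequently the transport residual reduces to $\big[(u^{l,n})^k-(ln^{-1/k})^k\big]\partial_x u^{l,n}$, whose leading term is $O(n^{1/k-2s})$ in amplitude by a binomial expansion. This cancellation is precisely what makes $u^{l,n}$ an approximate solution and is the reason the construction works at all.

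With this in hand, I would estimate $\E\sup_{[0,T]}\|E^{l,n}\|_{H^\delta}^2$ by treating the three contributions separately. For the drift, $\sup_{[0,T]}\big\|\int_0^t(\ldots)\,{\rm d}t'\big\|_{H^\delta}\le T\sup_{[0,T]}\|R\|_{H^\delta}$, where $R$ collects the transport residual and $F(u^{l,n})$; each summand of the binomial expansions of $(u^{l,n})^k\partial_x u^{l,n}$ and of the nonlinearities inside $F_1,F_2,F_3$ is an explicit trigonometric polynomial in $\theta$, so its $H^\delta$ norm is read off from its Fourier support (frequencies a bounded multiple of $n$) together with its amplitude. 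Here the constraint $\delta>1/2$ supplies the algebra/Moser estimates needed for the products, while the smoothing of $(1-\partial_{xx}^2)^{-1}$ and $(1-\partial_{xx}^2)^{-1}\partial_x$ (the latter annihilating the $x$-independent parts produced by $\sin^2\theta$, $\cos^2\theta$, etc.) controls the $F_i$ terms; Lemma \ref{F lemma} gives the general mapping properties, but I need the refined, $n$-explicit versions. For the stochastic part, the Burkholder--Davis--Gundy inequality together with Assumption \ref{Assumption-2} (applied with exponent $\delta$) yields $\E\sup_{[0,T]}\big\|\int_0^t h(t',u^{l,n})\,{\rm d}\W\big\|_{H^\delta}^2\lesssim\E\int_0^T\|F(u^{l,n})\|_{H^\delta}^2\,{\rm d}t'$, so the martingale term is governed by the very same $F$-estimates.

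Collecting the powers of $n$ from all summands and retaining the largest then gives the bound $Cn^{-2r_s}$. The two regimes in \eqref{rs>0} are expected to arise from a competition between two families of contributions: those carrying the low-frequency amplitude $n^{-1/k}$ raised to powers of $k$ (dominant for $s$ near $3/2$) and the genuinely high-frequency contributions built from $n^{-s}\cos\theta$ and its derivatives (dominant for large $s$); the crossover occurs exactly at $s=(2k+1)/k$, where the two candidate exponents coincide, and one checks $r_s>0$ throughout using $\delta<\min\{s-1,3/2\}$ (indeed $\delta<s-1\le 2s-\frac{k+1}{k}$ since $s\ge 1/k$).

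The main obstacle, and the essential difference from the $k=1$ Camassa--Holm case treated in \cite{Tang-2020-Arxiv}, is the bookkeeping forced by the higher-order nonlinearity: raising $u^{l,n}=ln^{-1/k}+n^{-s}\cos\theta$ to the $k$-th power, and handling $u^{k-1}u_x^2$ and $u^{k-2}u_x^3$ inside $F_2,F_3$, produces $O(k)$ cross terms, each with its own scaling in $n$ built from the two amplitudes $n^{-1/k}$ and $n^{-s}$ and the frequency $n$. Identifying the genuinely dominant one and verifying that the subleading terms are absorbed into $Cn^{-2r_s}$ across both regimes of $s$ is the delicate step; the leading-order cancellation above must be exploited, since the naive bound $\|u^{l,n}(t)-u^{l,n}(0)\|_{H^\delta}=O(n^{\delta-s})$ by itself already exceeds $n^{-r_s}$ for every $s>3/2$ when $k\ge 2$.
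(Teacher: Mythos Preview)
Your proposal is correct and in fact slightly more elementary than the paper's argument. You and the paper share the crucial first step: exploiting $l^k=l$ from \eqref{l n condition} so that the $j=0$ term of the binomial expansion of $(u^{l,n})^k\partial_x u^{l,n}$ exactly cancels $\partial_t u^{l,n}$, leaving only the residual $T_{n,k}=\sum_{j=1}^k\binom{k}{j}(ln^{-1/k})^{k-j}n^{-sj}\cos^j\theta\cdot(-n^{1-s}\sin\theta)$. Both approaches then reduce to the same term-by-term Sobolev bookkeeping for $T_{n,k}$ and for $F_1,F_2,F_3$ evaluated at $u^{l,n}$, using Lemma \ref{cos sin approximate estimate} and the algebra property of $H^\delta$ ($\delta>1/2$); the dominant exponents turn out to be $\delta-s-1$ (from $F_1$) and $\delta-2s+1+1/k$ (from $F_2$), whose crossover is exactly $s=(2k+1)/k$, yielding \eqref{rs>0}.

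The methodological difference is in how the final estimate on $E^{l,n}$ is closed. You split $E^{l,n}=\int_0^t[T_{n,k}+F(u^{l,n})]\,{\rm d}t'-\int_0^t h(t',u^{l,n})\,{\rm d}\W$, bound the drift by $T\sup_{[0,T]}\|T_{n,k}+F(u^{l,n})\|_{H^\delta}$, and apply BDG directly to the stochastic integral together with \eqref{assumption G}. The paper instead applies It\^o's formula to $\|E^{l,n}\|^2_{H^\delta}$, obtains cross terms of the form $(D^\delta T_{n,k},D^\delta E^{l,n})_{L^2}$, absorbs them by Young's inequality, and closes with Gr\"onwall. Your route is cleaner here because $E^{l,n}$ is fully explicit in $u^{l,n}$ with no feedback, so Gr\"onwall is unnecessary; the paper's route is the same machinery reused from the genuine energy estimates (e.g.\ Lemma \ref{v l n estimate lemma}), where feedback is present and Gr\"onwall is essential. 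Either way the bound is $C(T)n^{-2r_s}$.
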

\begin{proof}
	Since \eqref{l n condition} implies $l^k=l$. This means
	\begin{align*}
	&u^{l, n}(t)-u^{l, n}(0)+\int_{0}^{t}\left(u^{l, n}\right)^k\partial_x u^{l, n}\ {\rm d}t'\\
	=&u^{l, n}(t)-u^{l, n}(0)+\int_{0}^t\left(\sum_{j=0}^{k}\left(\begin{array}{l}{k} \\ {j}\end{array}\right) \left( l n^{-\frac{1}{k}}\right)^{k-j} n^{-sj}\cos^{j}\theta\right)\left(-n^{-s+1}\sin\theta\right)\ {\rm d}t'\\
	=&\int_{0}^t\left(\sum_{j=1}^{k}\left(\begin{array}{l}{k} \\ {j}\end{array}\right) \left( l n^{-\frac{1}{k}}\right)^{k-j} n^{-sj}\cos^{j}\theta\right)\left(-n^{-s+1}\sin\theta\right)\ {\rm d}t'
	\triangleq\int_{0}^tT_{n,k}\ {\rm d}t'.
	\end{align*}
	Then we have
	\begin{equation}
	E^{l, n}(t)-
	\int_{0}^{t}\left[T_{n,k}+F(u^{l, n})\right]{\rm d}t'
	+\int_{0}^{t}h(t',u^{l, n}){\rm d}\W=0.\label{Error equation}
	\end{equation}
We first notice that
	\begin{align}\label{error 1}
	\left\|T_{n,k}\right\|_{H^{\delta}}
	\lesssim & \sum_{j=1}^{k}n^{\frac{j}{k}-sj-s}\left\|\cos^j\theta\sin\theta\right\|_{H^{\delta}}\nonumber\\
	\lesssim & \sum_{j=1}^{k}n^{\frac{j}{k}-sj-s-1}\left\| \cos^{j+1}\theta \right\|_{H^{\delta+1}}\nonumber\\
	\lesssim &\max_{1\leq j\leq n}\{n^{\frac{j}{k}-sj-s+\delta}\}
	\lesssim  n^{\frac{1}{k}-2s+\delta}\lesssim  n^{1-2s+\delta}.
	\end{align}
	Recall that $F(\cdot)$ is given by \eqref{F decomposition}. Since $(1-\partial^2_{xx})^{-1}$ is bounded from $H^\delta$ to $H^{\delta+2}$, we can use Lemmas \ref{cos sin approximate estimate}  and \ref{Moser estimate} to   estimate $\|F_i(u^{l, n})\|_{H^{\delta}}$ ($i=1,2,3$) as follows.
	\begin{align}
	\|F_1(u^{l, n})\|_{H^{\delta}}\lesssim & \sum_{j=0}^{k}n^{\frac{j}{k}-sj-s}\left\|\cos^j\theta \sin\theta\right\|_{H^{\delta-2}}\notag\\
	\lesssim & \sum_{j=0}^{k}n^{\frac{j}{k}-sj-s-1}\left\|\cos^{j+1}\theta\right\|_{H^{\delta-1}}\nonumber\\
	\lesssim & \sum_{j=0}^{k}n^{\frac{j}{k}-sj-s-1}\left\|\cos^{j+1}\theta\right\|_{H^{\delta}}\nonumber\\
	\lesssim & \max_{0\leq j\leq k}\{n^{\frac{j}{k}-sj-s-1+\delta}\}
	\lesssim  n^{-s+\delta-1}.\label{F1 error 1}
	\end{align}
	\begin{align}\label{F2 error 1}
	\|F_2(u^{l, n})\|_{H^{\delta}}\lesssim & \sum_{j=0}^{k-1}n^{\frac{j+1}{k}-sj-2s+1}
	\left\|\cos^j\theta\sin^2\theta \right\|_{H^{\delta-1}}\nonumber\\
\lesssim & \sum_{j=0}^{k-1}n^{\frac{j+1}{k}-sj-2s+1}
\left\|\cos^j\theta\sin^2\theta \right\|_{H^{\delta}}\nonumber\\
	\lesssim & \max_{0\leq j\leq k-1}\{n^{\frac{j+1}{k}-sj-2s+\delta+1}\}
	\lesssim n^{-2s+\delta+1+\frac{1}{k}}.
	\end{align}
	\begin{align}\label{F3 error 1}
	\|F_3(u^{l, n})\|_{H^{\delta}}
	\lesssim & \sum_{j=0}^{k-2}n^{\frac{j+2}{k}-sj-3s+2}\left\|\cos^j\theta \sin^3\theta \right\|_{H^{\delta-2}}\nonumber\\
	\lesssim & \sum_{j=0}^{k-2}n^{\frac{j+2}{k}-sj-3s+2}\left\|\cos^j\theta \sin^3\theta \right\|_{H^{\delta}}\nonumber\\
	\lesssim & \max_{0\leq j\leq k-2}\{n^{\frac{j+2}{k}-sj-3s+\delta+2}\}
	\lesssim n^{-3s+\delta+\frac{2}{k}+2}.
	\end{align}
	In the above estimates, we used the fact that $F_3(\cdot)$ appears only for $k\geq 2$. Combining this fact, \eqref{F1 error 1}, \eqref{F2 error 1} and \eqref{F3 error 1}, we have
	\begin{align}
	\|F(u^{l, n})\|_{H^{\delta}}\lesssim&\max
\left\{n^{1-2s+\delta},n^{-3s+\delta+\frac{2}{k}+2},n^{-2s+\delta+\frac{1}{k}+1},n^{-s+\delta-1}
\right\}	\lesssim n^{-r_s}.\label{error 2}
	\end{align}
	Then, for any $T>0$ and $t\in[0,T]$, by virtue of  the It\^{o} formula, we arrive at
	\begin{align*}
	\|E^{l, n}(t)\|^2_{H^\delta}\leq&\left|\left(-2\int_0^{t}h(t',u^{l,n}){\rm d}\W,E^{l,n}\right)_{H^\delta}\right|+\sum_{i=2}^4\int_0^t|J_i|{\rm d}t',
	\end{align*}
	where
	\begin{align*}
	J_2&=2\left(D^\delta T_{n,k},
	D^\delta E^{l, n}\right)_{L^2},\\
	J_3&=2\left(D^\delta F(u^{l,n}),D^\delta E^{l, n}\right)_{L^2},\\
	J_4&=\|h(t',u^{l,n})\|_{L_2(U,H^\delta)}^2.
	\end{align*}
	Taking the supremum with respect to $t\in[0,T]$, using the Burkholder-Davis-Gundy inequality and using \eqref{assumption G} and \eqref{error 2} yield
	\begin{align*}
	\E\sup_{t\in[0,T]}\left|\left(-2\int_0^{t}h(t',u^{l,n}){\rm d}\W,E^{l,n}\right)_{H^\delta}\right|
	\leq&2\E
	\left(\int_0^T\|E^{l, n}(t)\|_{H^\delta}^2
	\|F(u^{l, n})\|_{H^\delta}^2{\rm d}t\right)^{\frac12}\notag\\
	\leq& 2\E
	\left(\sup_{t\in[0,T]}\|E^{l, n}(t)\|_{H^\delta}^2
	\int_0^T\|F(u^{l, n})\|_{H^\delta}^2{\rm d}t\right)^{\frac12}\notag\\
	\leq& \frac12\E\sup_{t\in[0,T]}\|E^{l, n}(t)\|_{H^\delta}^2
	+CTn^{-2r_s}.
	\end{align*}
	By virtue of \eqref{error 1} and \eqref{error 2}, we obtain
	\begin{align*}
	\int_0^{T}\E|J_2|{\rm d}t
	\leq & C\int_0^{T}\E\left(\left\|T_{n,k}\right\|_{H^{\delta}}
	\|E^{l, n}(t)\|_{H^\delta}\right){\rm d}t\notag\\
	\leq & C\int_0^{T}\E
	\left\|T_{n,k}\right\|^2_{H^{\delta}}{\rm d}t
	+C\int_0^{T}\E\|E^{l, n}(t)\|_{H^\delta}^2{\rm d}t\notag\\
	\leq & CTn^{-2r_s}+C\int_0^{T}\E\|E^{l, n}(t)\|^2_{H^\delta}{\rm d}t,
	\end{align*}
	\begin{align*}
	\int_0^{T}\E|J_3|{\rm d}t
	\leq & C\int_0^{T}\E\left(\|F(u^{l, n})\|_{H^{\delta}}
	\|E^{l, n}(t)\|_{H^\delta}\right){\rm d}t\notag\\
	\leq & C\int_0^{T}\E\|F(u^{l, n})\|^2_{H^{\delta}}{\rm d}t
	+C\int_0^{T}\E\|E^{l, n}(t)\|^2_{H^\delta}{\rm d}t\notag\\
	\leq & CT n^{-2r_s}
	+C\int_0^{T}\E\|E^{l, n}(t)\|^2_{H^\delta}{\rm d}t,
	\end{align*}
	and by \eqref{assumption G},
	\begin{align*}
	\int_0^{T}\E|J_4|{\rm d}t
	\leq & C\int_0^{T}\E \|F(u^{l, n})\|^2_{H^{\delta}}{\rm d}t\leq  CT n^{-2r_s}.
	\end{align*}
	Collecting the above estimates, we arrive at
	\begin{align*}
	\E\sup_{t\in[0,T]}\|E^{l, n}(t)\|^2_{H^\delta}\leq CT n^{-2r_s}
	+C\int_0^{T}\E\sup_{t'\in[0,t]}\|E^{l, n}(t')\|^2_{H^\delta}{\rm d}t.
	\end{align*}
	Obviously, for each $n\ge1$ and $l\in\{-1,1\}$, $\E\sup_{t'\in[0,t]}\|E^{l, n}(t')\|^2_{H^\delta}$ is finite. Then we infer from the Gr\"{o}nwall inequality that
	\begin{align*}
	\E\sup_{t\in[0,T]}\|E^{l, n}(t)\|^2_{H^\delta}\leq
	Cn^{-2r_s},\ \ C=C(T).
	\end{align*}
	This is the desired result.
\end{proof}

\subsection{Construction of actual solutions}

Now we consider the following periodic boundary value problem with deterministic initial data $u^{l, n}(0,x)$,  i.e.,
\begin{equation}\label{appro periodic Cauchy problem}
\left\{\begin{aligned}
&{\rm d}u+\left[u^ku_x+F(u)\right]{\rm d}t
=h(t,u){\rm d}\W,\qquad t>0,\ x\in \T,\\
&u(0,x)=u^{l, n}(0,x), \qquad x\in \T.
\end{aligned} \right.
\end{equation}
Since $h$ satisfies \eqref{assumption G}, we see that \eqref{assumption 1 for h} and \eqref{assumption 2 for h} are also verified. Then Theorem \ref{Local pathwise solution} yields that for each $n\in\N$, \eqref{appro periodic Cauchy problem} has a uniqueness maximal pathwise solution $(u_{l,n},\tau^*_{l,n})$.

\subsection{Estimates on the errors}

\begin{Lemma}\label{v l n estimate lemma}
	Let $s>\frac{3}{2}$,
	$\frac12<\delta <\min \left\{s-1,\frac{3}{2}\right\}$ and $r_{s}>0$ be given as in Lemma \ref{error estimate lemma}. For any $R>1$ and $l$ satisfying \eqref{l n condition}, we define
	\begin{align}
	\tau^R_{l,n}:=\inf\left\{t\geq0:\|u_{l, n}\|_{H^s}> R\right\}.
	\label{u eta n stopping time}
	\end{align}
	Then for any $T>0$, when $n\gg 1$, we have that for $l$ satisfying \eqref{l n condition},
	\begin{align}
	\E\sup_{t\in[0,T\wedge\tau^R_{l,n}]}\|u^{l, n}-u_{l, n}\|^2_{H^{\delta}}\leq   Cn^{-2r_s},\ \ C=C(R,T),\label{v sigma norm}
	\end{align}
	and
	\begin{align}
	\E\sup_{t\in[0,T\wedge\tau^R_{l,n}]}\|u^{l, n}-u_{l, n}\|^2_{H^{2s-\delta}}\leq   Cn^{2s-2\sigma},\ \ C=C(R,T).\label{v 2s-sigma norm}
	\end{align}
\end{Lemma}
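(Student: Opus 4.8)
The plan is to run a Gr\"onwall argument on the difference $v^{l,n}:=u^{l,n}-u_{l,n}$ in the low norm $H^\delta$ to get \eqref{v sigma norm}, and a separate a priori bound in the high norm $H^{2s-\delta}$ to get \eqref{v 2s-sigma norm} (so that $\sigma=\delta$ there). First I would record the equation solved by $v^{l,n}$. Since $v^{l,n}(0)=0$, subtracting \eqref{appro periodic Cauchy problem} for $u_{l,n}$ from the identity \eqref{Error equation} for $u^{l,n}$ (recall $l^k=l$ by \eqref{l n condition}), one sees that $v^{l,n}$ is an It\^{o} process whose diffusion coefficient is $-h(t,u_{l,n})$ and whose drift can be written as $A^{l,n}-P^{l,n}-Q^{l,n}$, with $A^{l,n}=T_{n,k}+F(u^{l,n})$, $P^{l,n}=(u^{l,n})^k\partial_xu^{l,n}-u_{l,n}^k\partial_xu_{l,n}$ and $Q^{l,n}=F(u^{l,n})-F(u_{l,n})$. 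The point of this bookkeeping is that $A^{l,n}$ is exactly the error drift already controlled in Lemma \ref{error estimate lemma}: by \eqref{error 1} and \eqref{error 2} one has $\|A^{l,n}\|_{H^\delta}\lesssim n^{-r_s}$.

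Next I would apply the It\^{o} formula to $\|v^{l,n}\|_{H^\delta}^2$, made rigorous through the mollifier $T_\e$ exactly as in the proof of Lemma \ref{blow-up criteria lemma} and then letting $\e\to0$. After taking expectations the martingale drops out and four contributions survive. The term $2(v^{l,n},A^{l,n})_{H^\delta}$ is bounded by $\|v^{l,n}\|_{H^\delta}^2+Cn^{-2r_s}$; the term $2(v^{l,n},Q^{l,n})_{H^\delta}$ is $\lesssim C(R)\|v^{l,n}\|_{H^\delta}^2$ by the local Lipschitz bound for $F$ from Lemma \ref{F lemma} on $\{\|u_{l,n}\|_{H^s}\le R\}$. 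The transport term $2(v^{l,n},P^{l,n})_{H^\delta}$ loses one derivative, so I would split $P^{l,n}=(u^{l,n})^k\partial_xv^{l,n}+[(u^{l,n})^k-u_{l,n}^k]\partial_xu_{l,n}$: the first piece is treated by integration by parts together with the Kato--Ponce commutator estimate (Lemma \ref{Kato-Ponce commutator estimate}), whose coefficients involve only low-order Sobolev norms of $(u^{l,n})^k$ that stay bounded uniformly in $n$ (here $\delta+1<s$ is used), and the second by the algebra property of $H^\delta$ ($\delta>1/2$) together with $\|u_{l,n}\|_{H^{\delta+1}}\le\|u_{l,n}\|_{H^s}\le R$. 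Finally the It\^{o} correction $\|h(t,u_{l,n})\|_{L_2(U;H^\delta)}^2\le\|F(u_{l,n})\|_{H^\delta}^2$ (Assumption \ref{Assumption-2}) is estimated by the triangle inequality, $\|F(u_{l,n})\|_{H^\delta}\le\|F(u_{l,n})-F(u^{l,n})\|_{H^\delta}+\|F(u^{l,n})\|_{H^\delta}\lesssim C(R)\|v^{l,n}\|_{H^\delta}+n^{-r_s}$, so this term is $\lesssim\E\|v^{l,n}\|_{H^\delta}^2+n^{-2r_s}$. Handling $\sup_t$ by the Burkholder--Davis--Gundy inequality (which reproduces the same $\|F(u_{l,n})\|^2$ under a square root, absorbed as above) and then Gr\"onwall's inequality gives \eqref{v sigma norm}.

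For the high norm \eqref{v 2s-sigma norm}, where $2s-\delta>s$, I would bound the two summands of $v^{l,n}$ separately rather than in difference form. The profile is explicit, $\|u^{l,n}\|_{H^{2s-\delta}}\lesssim n^{-1/k}+n^{s-\delta}\lesssim n^{s-\delta}$. For the genuine solution, since $u^{l,n}(0)$ is smooth, persistence of regularity (Theorem \ref{Local pathwise solution} applied at level $2s-\delta$, together with pathwise uniqueness) places $u_{l,n}$ in $C([0,\tau^R_{l,n});H^{2s-\delta})$, and the corresponding higher-order energy estimate --- in which the transport term is controlled through Lemma \ref{Kato-Ponce commutator estimate} by $\|u_{l,n}\|_{W^{1,\infty}}^k\lesssim\|u_{l,n}\|_{H^s}^k\le R^k$ and the noise by $\|F(u_{l,n})\|_{H^{2s-\delta}}$ via Lemma \ref{F lemma} --- yields, after Gr\"onwall, $\E\sup_{[0,T\wedge\tau^R_{l,n}]}\|u_{l,n}\|_{H^{2s-\delta}}^2\le C(R,T)\|u^{l,n}(0)\|_{H^{2s-\delta}}^2\lesssim C(R,T)n^{2s-2\delta}$. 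The triangle inequality then gives \eqref{v 2s-sigma norm}.

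I expect the main obstacle to be the low-norm step. Naively the diffusion of $v^{l,n}$ is $-h(t,u_{l,n})$, whose squared Hilbert--Schmidt norm is a priori of order one; left uncontrolled it would force $\E\|v^{l,n}\|_{H^\delta}^2$ to grow linearly in $t$ and preclude any decay. The resolution is structural: Assumption \ref{Assumption-2} ties $\|h(t,u_{l,n})\|_{L_2(U;H^\delta)}$ to $\|F(u_{l,n})\|_{H^\delta}$, and the smallness $\|F(u^{l,n})\|_{H^\delta}\lesssim n^{-r_s}$ coming from Lemma \ref{error estimate lemma} lets one bootstrap $\|F(u_{l,n})\|_{H^\delta}\lesssim\|v^{l,n}\|_{H^\delta}+n^{-r_s}$ and close the Gr\"onwall loop. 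A secondary but essential point is that every constant produced along the way must be independent of $n$; this is exactly what the restriction $\delta<\min\{s-1,3/2\}$ secures, keeping $\partial_xu_{l,n}$ bounded in $H^\delta$ by $R$ while $H^\delta$ remains an algebra.
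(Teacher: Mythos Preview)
Your strategy is the paper's own: It\^{o}--Gr\"onwall on $\|v^{l,n}\|_{H^\delta}^2$ for \eqref{v sigma norm}, and a separate $H^{2s-\delta}$ energy bound on $u_{l,n}$ (combined with the explicit profile bound on $u^{l,n}$) for \eqref{v 2s-sigma norm}. The bookkeeping of the drift as $A^{l,n}-P^{l,n}-Q^{l,n}$ is equivalent to the paper's grouping, and your treatment of the It\^o correction via Assumption~\ref{Assumption-2} and the triangle inequality on $F(u_{l,n})$ is exactly what the paper does.

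One step needs repair. For the transport piece $(u^{l,n})^k\partial_x v^{l,n}$ you cite Lemma~\ref{Kato-Ponce commutator estimate}. As stated, that estimate produces a term $\|(u^{l,n})^k\|_{H^\delta}\|\partial_x v^{l,n}\|_{L^\infty}$; on $\{\|u_{l,n}\|_{H^s}\le R\}$ the second factor is only $O(R)$, not controllable by $\|v^{l,n}\|_{H^\delta}$, and even after exploiting $\|(u^{l,n})^k\|_{H^\delta}\lesssim n^{-1}$ one gets a source term $\sim n^{-2}$ in the Gr\"onwall loop, strictly weaker than $n^{-2r_s}$ (recall $r_s>1$ when $k\ge 2$) and too weak for the later interpolation since $\delta<s-1$. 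What you actually describe---``coefficients involve only low-order Sobolev norms of $(u^{l,n})^k$\ldots\ here $\delta+1<s$ is used''---is precisely Lemma~\ref{Taylor}, and that is the correct tool; with it your first piece gives $\lesssim\|(u^{l,n})^k\|_{H^s}\|v^{l,n}\|_{H^\delta}^2$ and the argument closes.

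For comparison, the paper handles $P^{l,n}$ in one stroke rather than splitting: it writes $(u^{l,n})^k\partial_xu^{l,n}-u_{l,n}^k\partial_xu_{l,n}=\tfrac{1}{k+1}\partial_x(qv^{l,n})$ with $q=\sum_{i=0}^k(u^{l,n})^{k-i}(u_{l,n})^i$, and then Lemma~\ref{Taylor} plus one integration by parts yields $|(D^\delta\partial_x(qv^{l,n}),D^\delta v^{l,n})_{L^2}|\lesssim\|q\|_{H^s}\|v^{l,n}\|_{H^\delta}^2$. This avoids the separate algebra-property estimate on your second piece. Your two-piece split works too once Lemma~\ref{Taylor} is in place; the paper's factorisation is just tidier.
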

\begin{proof}
	We first notice that by Lemma \ref{cos sin approximate estimate}, for $l$ satisfying \eqref{l n condition},
	\begin{align}
	\|u^{l, n}(t)\|_{H^s}\lesssim 1,\ \ \forall\ t>0.\label{appro solution bounded}
	\end{align}
	Let $q=q^{l,n}=\sum_{i=0}^{k}\left(u^{l, n}\right)^{k-i}\left(u_{l, n}\right)^i$ and $v=v^{l,n}=u^{l, n}-u_{l, n}$. In view of \eqref{error definition}, \eqref{Error equation} and \eqref{appro periodic Cauchy problem}, we see that
	\begin{align*}
	v(t)+&\int_{0}^{t}\left[\frac{1}{k+1}\partial_x(qv)
	-F(u_{l, n})\right]{\rm d}t'=-\int_{0}^{t}h(t',u_{l, n})\ {\rm d}\W
	+\int_{0}^{t}T_{n,k}\ {\rm d}t'.
	\end{align*}
	For any $T>0$, we use the It\^{o} formula on $[0,T\wedge\tau^R_{l,n}]$, take the supremum over $t\in[0,T\wedge\tau^R_{l,n}]$ and use the Burkholder-Davis-Gundy inequality with noticing \eqref{assumption G} to find
	\begin{align*}
	\E\sup_{t\in[0,T\wedge\tau^R_{l,n}]}\|v(t)\|^2_{H^{\delta}}\leq C\E\left(\int_0^{T\wedge\tau^R_{l,n}}|K_1|^2\ {\rm d}t\right)^{\frac12}
	+\sum_{i=2}^5\E\int_0^{T\wedge\tau^R_{l,n}}|K_i|\ {\rm d}t, 
	\end{align*}
	where
	\begin{align*}
	K_1&=\|v\|_{H^\delta}\|F(u_{l,n})\|_{H^\delta},\\
	K_2&=2\left(D^\delta T_{n,k},
	D^\delta v\right)_{L^2},\\
	K_3&=-\frac{2}{k+1}\left(D^\delta \partial_x[qv],D^\delta v\right)_{L^2},\\
	K_4&=2\left(D^\delta F(u_{l,n}),D^\delta v\right)_{L^2},\\
	K_5&=\|h(t,u_{l,n})\|_{L_2(U,H^\delta)}^2.
	\end{align*}
	We can first infer from Lemma \ref{F lemma} that
	\begin{align*}
	\|F(u_{l,n})\|_{H^\delta}^2\lesssim &
	\left(\|F(u^{l,n})-F(u_{l,n})\|_{H^\delta}
	+\|F(u^{l,n})\|_{H^\delta}\right)^2\notag\\
	\lesssim &
	(\|u^{l,n}\|_{H^{s}}+\|u_{l,n}\|_{H^{s}})^2
	\|v\|^2_{H^{\delta}}
	+\|F(u^{l,n})\|^2_{H^\delta}.
	\end{align*}
	Therefore, applying Lemmas \ref{Taylor} and \ref{F lemma}, $H^{\delta}\hookrightarrow L^{\infty}$, integrating by part and \eqref{error 1}, we have
	\begin{align*}
	|K_1|^2\lesssim(\|u^{l, n}\|_{H^{s}}+\|u_{l, n}\|_{H^{s}})^2
	\|v\|^4_{H^{\delta}}
	+\|F(u^{l, n})\|^2_{H^\delta}\|v\|^2_{H^\delta},
	\end{align*}
	\begin{align*}
	|K_2|\lesssim \left\|T_{n,k}\right\|^2_{H^{\delta}}+\|v\|^2_{H^{\delta}}
	\lesssim n^{-2r_s}+\|v\|^2_{H^{\delta}},
	\end{align*}
	\begin{align*}
	|K_3|\lesssim &\|q\|_{H^s}\|v\|^2_{H^{\delta}}
	+\|q_x\|_{L^\infty}\|v\|^2_{H^{\delta}}
	\lesssim\|q\|_{H^s}\|v\|^2_{H^\delta},
	\end{align*}
	\begin{align*}
	|K_4|\lesssim(\|u^{l, n}\|_{H^{s}}+\|u_{l, n}\|_{H^{s}})
	\|v\|^2_{H^{\delta}}
	+\|F(u^{l, n})\|^2_{H^\delta}+\|v\|^2_{H^\delta},
	\end{align*}
	and
	\begin{align*}
	|K_5|\lesssim(\|u^{l, n}\|_{H^{s}}+\|u_{l, n}\|_{H^{s}})^2
	\|v\|^2_{H^{\delta}}
	+\|F(u^{l, n})\|^2_{H^\delta}.
	\end{align*}
	By virtue of Lemma \ref{F lemma}, \eqref{error 2}, \eqref{u eta n stopping time} and \eqref{appro solution bounded}, we have
	\begin{align*}
	&\hspace*{-1cm}C\E\left(\int_0^{T\wedge\tau^R_{l,n}}|K_1|^2\ {\rm d}t\right)^{\frac12}\notag\\
	\leq& C\E
	\left(\sup_{t\in[0,T\wedge\tau^R_{l,n}]}\|v\|_{H^{\delta}}^2
	\int_0^{T\wedge\tau^R_{l,n}}
	(\|u^{l, n}\|_{H^{s}}+\|u_{l, n}\|_{H^{s}})^2
	\|v\|^2_{H^{\delta}}\ {\rm d}t\right)^{\frac12}\notag\\
	&+C\E
	\left(\sup_{t\in[0,T\wedge\tau^R_{l,n}]}\|v\|_{H^{\delta}}^2
	\int_0^{T\wedge\tau^R_{l,n}}
	\|F(u^{l, n})\|^2_{H^\delta}\ {\rm d}t\right)^{\frac12}\notag\\
	\leq& \frac12\E\sup_{t\in[0,T\wedge\tau^R_{l,n}]}\|v\|_{H^{\delta}}^2
	+C_R\E\int_0^{T\wedge\tau^R_{l,n}}
	\|v(t)\|^2_{H^{\delta}}\ {\rm d}t
	+C\E\int_0^{T\wedge\tau^R_{l,n}}
	\|F(u^{l, n})\|^2_{H^\delta}\ {\rm d}t\notag\\
	\leq& \frac12\E\sup_{t\in[0,T\wedge\tau^R_{l,n}]}\|v\|_{H^{\delta}}^2
	+C_R\E\int_0^{T}
	\sup_{t'\in[0,t\wedge\tau^R_{l,n}]}\|v(t')\|^2_{H^\delta}\ {\rm d}t
	+CT n^{-2r_s},
	\end{align*}
	\begin{align*}
	\E\int_0^{T\wedge\tau^R_{l,n}}|K_2|+|K_4|+|K_5|\ {\rm d}t
	\leq  CTn^{-2r_s}
	+C_R\int_0^{T}\E
	\sup_{t'\in[0,t\wedge\tau^R_{l,n}]}\|v(t')\|^2_{H^{\delta}}\ {\rm d}t,
	\end{align*}
	and
	\begin{align*}
	\E\int_0^{T\wedge\tau^R_{l,n}}|K_3|\ {\rm d}t
	\leq  C_R\int_0^{T}\E
	\sup_{t'\in[0,t\wedge\tau^R_{l,n}]}\|v(t')\|^2_{H^{\delta}}\ {\rm d}t.
	\end{align*}
	Consequently,
	\begin{align*}
	\E\sup_{t\in[0,T\wedge\tau^R_{l,n}]}\|v(t)\|^2_{H^{\delta}}\leq   CTn^{-2r_s}
	+C_R\int_0^{T}\E
	\sup_{t'\in[0,t\wedge\tau^R_{l,n}]}\|v(t')\|^2_{H^{\delta}}\ {\rm d}t.
	\end{align*}
	Using the Gr\"{o}nwall inequality, we have
	\begin{align*}
	\E\sup_{t\in[0,T\wedge\tau^R_{l,n}]}\|v(t)\|^2_{H^{\delta}}\leq   Cn^{-2r_s},\ \ C=C(R,T),
	\end{align*}
	which is \eqref{v sigma norm}. For \eqref{v 2s-sigma norm}, we first notice 
	that $u_{l, n}$ is the unique solution to \eqref{appro periodic Cauchy problem} with $2s-\delta>3/2$. For each fixed $n\in \Z^{+}$,  we can go along the lines as we derive \eqref{u Hs estimate} with using \eqref{u eta n stopping time} to find 
	\begin{align}\label{actual solution 2s-delta estimate}
	\E\sup_{t\in[0,T\wedge\tau^R_{l,n}]}\|u_{l, n}(t)\|^2_{H^{2s-\delta}} 
	\leq &C\E\|u^{l, n}(0)\|^2_{H^{2s-\delta}}+ C_R\int_0^{T}
	\left(\E\sup_{t'\in[0,t\wedge\tau^R_{l,n}]}\|u_{l,n}(t')\|_{H^{2s-\delta}}^2\right){\rm d}t.
	\end{align}
	From \eqref{actual solution 2s-delta estimate}, we can use the Gr\"{o}nwall inequality and Lemma \ref{cos sin approximate estimate} to infer
	\begin{align*}
	\E\sup_{t\in[0,T\wedge\tau^R_{l,n}]}\|u_{l, n}(t)\|^2_{H^{2s-\delta}}
	\leq C\E\|u^{l, n}(0)\|^2_{H^{2s-\delta}}
	\leq C n^{2s-2\delta},\ \ C=C(R,T).
	\end{align*}
By Lemma \ref{cos sin approximate estimate} again, we have that for some $C=C(R,T)$ and $l$ satisfying \eqref{l n condition},
	\begin{align*}
	\E\sup_{t\in[0,T\wedge\tau^R_{l,n}]}\|v\|^2_{H^{2s-\delta}}
	\leq C\E\sup_{t\in[0,T\wedge\tau^R_{l,n}]}\|u_{l, n}\|^2_{H^{2s-\delta}}
	+C\E\sup_{t\in[0,T\wedge\tau^R_{l,n}]}\|u^{l, n}\|^2_{H^{2s-\delta}}
	\leq C n^{2s-2\delta},
	\end{align*}
	which is \eqref{v 2s-sigma norm}.
\end{proof}

\subsection{Final proof for Theorem \ref{weak instability}}

To begin with, we will show that if the exiting time of the zero solution is strongly stable, then  $\{u_{-1,n}\}$ and $\{u_{1,n}\}$ are two sequences of pathwise solutions such that
\eqref{Non--uniform dependence condition 1}--\eqref{Non--uniform dependence condition 4} are satisfied.

\begin{Lemma}\label{tau eta n tends to infty}
	If for some $R_0\gg1$, the $R_0$-exiting time of the zero solution to \eqref{SGCH problem} is strongly stable,  then for $l$ satisfying \eqref{l n condition} and $\tau^{R_0}_{l,n}$ given in \eqref{u eta n stopping time}, we have
	\begin{align}
	\lim_{n\rightarrow\infty} \tau^{R_0}_{l,n}=\infty\ \  \p-a.s.\label{tau eta n lower>0}
	\end{align}
\end{Lemma}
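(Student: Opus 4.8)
The plan is to apply the assumed strong stability directly to the initial profiles of the approximate solutions, treating the zero solution as the limiting object. First I would record the trivial bookkeeping: the data $u^{l, n}(0,x)=l n^{-1/k}+n^{-s}\cos(nx)$ are deterministic, hence $\mathcal{F}_0$-measurable with $\E\|u^{l, n}(0)\|^2_{H^s}=\|u^{l, n}(0)\|^2_{H^s}<\infty$, and the corresponding actual solutions $(u_{l,n},\tau^*_{l,n})$ are exactly those furnished above through Theorem \ref{Local pathwise solution}. The only genuine analytic input is to verify that these profiles converge to $0$ in $H^{s'}$ for every $s'<s$ as $n\to\infty$.

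For that convergence I would exploit that the constant mode $l n^{-1/k}$ and the frequency-$n$ mode $n^{-s}\cos(nx)$ live on disjoint frequencies, so by Parseval (or by Lemma \ref{cos sin approximate estimate}),
\[
\|u^{l, n}(0)\|_{H^{s'}}^2\lesssim n^{-2/k}+n^{-2s}\|\cos(n\cdot)\|_{H^{s'}}^2\lesssim n^{-2/k}+n^{2(s'-s)}.
\]
Since $1/k>0$ and $s'-s<0$, the right-hand side vanishes as $n\to\infty$, so $u^{l, n}(0)\to 0$ in $H^{s'}$ for all $s'<s$; being deterministic, this holds $\p$-a.s. Next I would identify the limiting exiting time: the zero datum produces the zero solution, because $F(0)=0$ and $h(t,0)=0$ (the latter from \eqref{assumption G} together with $F(0)=0$), and pathwise uniqueness makes this the unique solution. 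As $\|0\|_{H^s}=0<R_0$ for all $t$, its $R_0$-exiting time is $\tau^{R_0}=\inf\varnothing=\infty$.

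With these ingredients assembled, the conclusion follows by invoking part (2) of Definition \ref{Stability on exiting time}: the assumed strong stability of the $R_0$-exiting time at the zero solution, combined with the established convergence $u^{l, n}(0)\to 0$ in $H^{s'}$ for all $s'<s$, yields $\lim_{n\to\infty}\tau^{R_0}_{l,n}=\tau^{R_0}=\infty$ $\p$-a.s., which is precisely \eqref{tau eta n lower>0}. There is no serious obstacle here; the entire content is encoded in the scaling of the approximate data, engineered so that the oscillatory part has $H^s$-norm of order $n^{-s}\cdot n^{s}\sim 1$ (bounded away from zero) yet vanishes in every weaker $H^{s'}$ norm. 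This is exactly why only the \emph{strong} (rather than ordinary) stability hypothesis is available and sufficient, and why the argument reduces to matching the definition to this particular sequence.
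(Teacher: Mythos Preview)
Your proposal is correct and follows essentially the same approach as the paper's proof: verify that $u^{l,n}(0)\to 0$ in $H^{s'}$ for all $s'<s$, note that the zero solution has $R_0$-exiting time $\infty$, and then invoke the strong stability hypothesis. The paper's proof is terser (it simply asserts the $H^{s'}$-convergence and that the unique solution with zero data is zero), whereas you have filled in the scaling computation and the justification $h(t,0)=0$ via \eqref{assumption G}, but the argument is the same.
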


\begin{proof}
	We notice that for all $s'<s$,
	$
	\lim_{n\rightarrow\infty} \|u^{l, n}(0)\|_{H^{s'}}=\lim_{n\rightarrow\infty} \|u_{l, n}(0)-0\|_{H^{s'}}=0.
	$
	Since the unique solution with zero initial data to \eqref{SGCH problem} is zero  and the $R_0$-exiting time of the zero solution is $\infty$, we see that \eqref{tau eta n lower>0} holds true provided the $R_0$-exiting time of the zero solution to \eqref{SGCH problem} is strongly stable.
\end{proof}

\begin{proof}[Proof for Theorem \ref{weak instability}]
	For each $n>1$, for $l$ satisfying \eqref{l n condition} and for the fixed $R_0\gg1$, Lemma \ref{cos sin approximate estimate} and \eqref{u eta n stopping time} give us $\p\{\tau_{l, n}^{R_0}>0\}=1$ and Lemma \ref{tau eta n tends to infty} implies \eqref{Non--uniform dependence condition 1}. Besides, Theorem \ref{Local pathwise solution} and \eqref{u eta n stopping time} show that $u_{l, n}\in C([0,\tau_{l, n}^{R_0}];H^s)$ $\p-a.s.$ and
	\eqref{Non--uniform dependence condition 2} holds true.  By virtue of the interpolation inequality, we have
	\begin{align*}
	&\hspace*{-1cm}\E\sup_{t\in[0,T\wedge\tau_{l, n}^{R_0}]}\|u^{l, n}-u_{l, n}\|_{H^{s}}\\
	\leq& C 
	\left( \E\sup_{t\in[0,T\wedge\tau_{l, n}^{R_0}]}\|u^{l, n}-u_{l, n}\|_{H^{\delta}}\right)^{\frac12}
	\left( \E\sup_{t\in[0,T\wedge\tau_{l, n}^{R_0}]}\|u^{l, n}-u_{l, n}\|_{H^{2s-\delta}}\right)^{\frac12}\\
	\leq& C 
	\left( \E\sup_{t\in[0,T\wedge\tau_{l, n}^{R_0}]}\|u^{l, n}-u_{l, n}\|^2_{H^{\delta}}\right)^{\frac14}
	\left( \E\sup_{t\in[0,T\wedge\tau_{l, n}^{R_0}]}\|u^{l, n}-u_{l, n}\|^2_{H^{2s-\delta}}\right)^{\frac14}.
	\end{align*}
	For any $T>0$, combining Lemma \ref{v l n estimate lemma} and the above estimate yields
	\begin{align}
\E\sup_{t\in[0,T\wedge\tau_{l, n}^{R_0}]}
	\|u^{l, n}-u_{l, n}\|_{H^{s}}
	\lesssim  n^{-\frac{1}{4}\cdot 2r_{s}}\cdot n^{\frac{1}{4}\cdot(2s-2\delta)}=n^{r'_s},\label{difference r's}
	\end{align}
	for some $C=C(R_0,T)$ and $l$ satisfying \eqref{l n condition}.
Recalling $k\geq 2$ and \eqref{rs>0}, we have
\begin{equation*}
0>r'_s=-r_s\cdot \frac{1}{2}+(s-\delta)\cdot\frac{1}{2}=
\begin{cases}
\displaystyle-\frac{s}{2}+\frac{k+1}{2k}, & {\rm if}  \  \displaystyle\frac{3}{2}<s\leqslant\frac{2k+1}{k},\\[1ex]
\displaystyle-\frac{1}{2}, &  {\rm if} \ \displaystyle s>\frac{2k+1}{k}.
\end{cases}
\end{equation*}
	Therefore for $l$ satisfying \eqref{l n condition},
	\begin{equation}
	\lim_{n\rightarrow\infty}
	\E\sup_{t\in[0,T\wedge\tau_{l, n}^{R_0}]}\|u^{l, n}-u_{l, n}\|_{H^{s}}
	=0.\label{difference tends to 0}
	\end{equation}
		For odd $k$, \eqref{Non--uniform dependence condition 3} is given by
	\begin{align*} 
	\|u_{-1,n}(0)-u_{1,n}(0)\|_{H^{s}}=\|u^{-1,n}(0)-u^{1,n}(0)\|_{H^{s}}\lesssim n^{-1/k}\rightarrow 0, {~\rm as~}n\rightarrow \infty.
	\end{align*}
		For even $k$, \eqref{Non--uniform dependence condition 3} is given by
		\begin{align*} 
		\|u_{0,n}(0)-u_{1,n}(0)\|_{H^{s}}=\|u^{0,n}(0)-u^{1,n}(0)\|_{H^{s}}\lesssim n^{-1/k}\rightarrow 0, {~\rm as~}n\rightarrow \infty.
		\end{align*}
	When $k$ is odd, we use \eqref{difference tends to 0} to find that for any $T>0$,
	\begin{align}
	&\hspace*{-1cm}\liminf_{n\rightarrow \infty}
	\E\sup_{t\in[0,T\wedge\tau_{-1,n}^{R_0}\wedge\tau_{1,n}^{R_0}]}
	\|u_{-1,n}(t)-u_{1,n}(t)\|_{H^s}\notag\\
	\geq&
	\liminf_{n\rightarrow \infty}
	\E\sup_{t\in[0,T\wedge\tau_{-1,n}^{R_0}\wedge\tau_{1,n}^{R_0}]}
	\|u^{-1,n}(t)-u^{1,n}(t)\|_{H^s}\notag\\
	&-\lim_{n\rightarrow \infty}
	\E\sup_{t\in[0,T\wedge\tau_{-1,n}^{R_0}\wedge\tau_{1,n}^{R_0}]}
	\|u^{-1,n}(t)-u_{-1,n}(t)\|_{H^s}\notag\\
	&-\lim_{n\rightarrow \infty}\E\sup_{t\in[0,T\wedge\tau_{-1,n}^{R_0}\wedge\tau_{1,n}^{R_0}]}
	\|u^{1,n}(t)-u_{1,n}(t)\|_{H^s}\notag\\
	\gtrsim &\liminf_{n\rightarrow \infty}
	\E\sup_{t\in[0,T\wedge\tau_{-1,n}^{R_0}\wedge\tau_{1,n}^{R_0}]}
	\|u^{-1,n}(t)-u^{1,n}(t)\|_{H^s}\notag\\
	\gtrsim &\liminf_{n\rightarrow \infty}
	\E\sup_{t\in[0,T\wedge\tau_{-1,n}^{R_0}\wedge\tau_{1,n}^{R_0}]}
	\|-2n^{-1}+n^{-s}\cos(nx+t)-n^{-s}\cos(nx-t)\|_{H^s}\nonumber\\
	\gtrsim &\liminf_{n\rightarrow \infty}
	\E\sup_{t\in[0,T\wedge\tau_{-1,n}^{R_0}\wedge\tau_{1,n}^{R_0}]}
	\left(n^{-s}\|\sin(nx)\|_{H^s}|\sin t|-\|2n^{-1}\|_{H^s}\right)
	\gtrsim\sup_{t\in[0,T]}|\sin t|,\label{To prove Non--uni 4}
	\end{align}
	where we have used Fatou's lemma.
 By
	\begin{align*}
	&
	\left(\E\sup_{t\in[0,T\wedge\tau_{-1,n}^{R_0}\wedge\tau_{1,n}^{R_0}]}
	\|u_{-1,n}(t)-u_{1,n}(t)\|^2_{H^s}\right)^{\frac12}
	\geq
	\E\sup_{t\in[0,T\wedge\tau_{-1,n}^{R_0}\wedge\tau_{1,n}^{R_0}]}
	\|u_{-1,n}(t)-u_{1,n}(t)\|_{H^s}
	\end{align*}
	and \eqref{To prove Non--uni 4}, we obtain \eqref{Non--uniform dependence condition 4}.  Similarly, we can also prove \eqref{Non--uniform dependence condition 4} when $k$ is even.  The proof is therefore completed.
\end{proof}

\section{Global existence}

In this section, we study the global existence and the blow-up of solutions to \eqref{GCH linear noise}, and estimate the associated probabilities. Motivated by \cite{GlattHoltz-Vicol-2014-AP,Rockner-Zhu-Zhu-2014-SPTA,Tang-2018-SIMA}, we introduce the following Girsanov type transform
\begin{align}
v=\frac{1}{\beta(\omega,t)} u,\ \
\beta(\omega,t)={\rm e}^{\int_0^tb(t') {\rm d} W_{t'}-\int_0^t\frac{b^2(t')}{2} {\rm d}t'}.\label{transform}
\end{align}

\begin{Proposition}\label{pathwise solutions v}
	Let $s>3/2$ and $h(t,u)=b(t) u$ such that $b(t)$ satisfies Assumption \ref{Assumption-3}. Let $\s=\left(\Omega, \mathcal{F},\p,\{\mathcal{F}_t\}_{t\geq0}, W\right)$ be fixed in advance. If $u_0(\omega,x)$ is an $H^s$-valued $\mathcal{F}_0$-measurable random variable with $\E\|u_0\|^2_{H^s}<\infty$ and $(u,\tau^*)$ is the corresponding unique maximal pathwise solution to \eqref{GCH linear noise},
	then for $t\in[0,\tau^*)$, the process $v$ given in \eqref{transform} solves 
	\begin{equation} \label{periodic Cauchy problem transform}
	\left\{\begin{aligned}
	&v_t+\beta^k v^kv_x+\beta^k F(v)=0,\ \ t>0,\ x\in\T,\\
	&v(\omega,0,x)=u_0(\omega,x),\  x\in\T.
	\end{aligned} \right.
	\end{equation}
	Moreover, one has
	$v\in C\left([0,\tau^*);H^s\right)\bigcap C^1([0,\tau^*) ;H^{s-1})$ $\p-a.s.$ and, if $s>3$, then
	\begin{equation}
	\p\{\|v(t)\|_{H^1}=\|u_0\|_{H^1}\}=1.\label{H1 conservation}
	\end{equation}
\end{Proposition}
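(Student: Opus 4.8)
The plan is to view the Girsanov transform \eqref{transform} as multiplication of the $H^s$-valued process $u$ by a strictly positive scalar process $\beta^{-1}=\beta^{-1}(\omega,t)$, read off the equation for $v$ by the It\^o product rule, and then exploit that every term of $u^k\partial_x u+F(u)$ is homogeneous of degree $k+1$ in $u$.

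First I would record the scalar dynamics. Writing $\beta={\rm e}^{X}$ with $X=\int_0^t b\,{\rm d}W-\int_0^t\tfrac{b^2}{2}\,{\rm d}t'$, It\^o's formula gives ${\rm d}\beta=\beta\, b\,{\rm d}W$ and ${\rm d}(\beta^{-1})=\beta^{-1}(-b\,{\rm d}W+b^2\,{\rm d}t)$. Since $X$ has continuous paths and $b$ is bounded by Assumption \ref{Assumption-3}, both $\beta$ and $\beta^{-1}$ are continuous and strictly positive in $t$ for a.e. $\omega$. To avoid any infinite-dimensional subtlety, I would pair \eqref{GCH linear noise} with an arbitrary $\phi\in H^s$ so that $t\mapsto(u,\phi)_{H^s}$ is a genuine real-valued It\^o process, and apply the one-dimensional product rule to $(v,\phi)_{H^s}=\beta^{-1}(u,\phi)_{H^s}$. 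The two martingale contributions, $\beta^{-1}b(u,\phi)_{H^s}$ coming from ${\rm d}u$ and $-\beta^{-1}b(u,\phi)_{H^s}$ coming from ${\rm d}(\beta^{-1})$, cancel, and the It\^o cross term $-\beta^{-1}b^2(u,\phi)_{H^s}\,{\rm d}t$ cancels the drift $\beta^{-1}b^2(u,\phi)_{H^s}\,{\rm d}t$. What remains is
\[
{\rm d}v=-\beta^{-1}\left[u^k\partial_x u+F(u)\right]{\rm d}t .
\]
Since $\beta$ is constant in $x$ while $u^k\partial_x u$ and each $F_i$ in \eqref{F decomposition} are homogeneous of degree $k+1$, substituting $u=\beta v$ yields $u^k\partial_x u=\beta^{k+1}v^kv_x$ and $F(u)=\beta^{k+1}F(v)$, so the prefactor becomes $\beta^{k}$. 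This is exactly \eqref{periodic Cauchy problem transform}, with $v(\omega,0,\cdot)=\beta^{-1}(0)u_0=u_0$ because $\beta(0)=1$. For the regularity, $v=\beta^{-1}u$ inherits $v\in C([0,\tau^*);H^s)$ $\p$-a.s. from $u$ (Theorem \ref{Local pathwise solution}) together with the continuity of $\beta^{-1}$; reading $v_t$ off \eqref{periodic Cauchy problem transform} and using that $H^s$ ($s>3/2$) is a Banach algebra together with Lemma \ref{F lemma} (so that $v^kv_x\in H^{s-1}$ and $F(v)\in H^{s}$), the right-hand side lies in $C([0,\tau^*);H^{s-1})$, which gives $v\in C^1([0,\tau^*);H^{s-1})$ $\p$-a.s.

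The main point is \eqref{H1 conservation}. Here I would pass to the momentum $m=(1-\partial_{xx}^2)v$ and use $\|v\|_{H^1}^2=(v,m)_{L^2}$. Because $s>3$, the map $t\mapsto(v,m)_{L^2}$ is $C^1$, and by the self-adjointness of $(1-\partial_{xx}^2)$,
\[
\frac{{\rm d}}{{\rm d}t}\|v\|_{H^1}^2=2(v_t,m)_{L^2}=-2\beta^k\left((1-\partial_{xx}^2)\left(v^kv_x+F(v)\right),v\right)_{L^2}.
\]
The decomposition \eqref{F decomposition} is arranged precisely so that
\[
(1-\partial_{xx}^2)\left(v^kv_x+F(v)\right)=(k+2)v^kv_x-(k+1)v^{k-1}v_xv_{xx}-v^kv_{xxx},
\]
which one checks by directly computing $(1-\partial_{xx}^2)(v^kv_x)$. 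Pairing this with $v$ and integrating by parts on $\T$, the term $(k+2)\int_{\T}v^{k+1}v_x\,{\rm d}x$ vanishes by periodicity, while $-\int_{\T}v^{k+1}v_{xxx}\,{\rm d}x=(k+1)\int_{\T}v^kv_xv_{xx}\,{\rm d}x$ cancels $-(k+1)\int_{\T}v^kv_xv_{xx}\,{\rm d}x$. Hence the spatial integral is zero, $\|v(t)\|_{H^1}^2$ is constant in $t$, and equals $\|v(0)\|_{H^1}^2=\|u_0\|_{H^1}^2$, which is \eqref{H1 conservation}. The delicate points are the exact algebraic cancellation, which relies on the specific coefficients in \eqref{F decomposition}, and the requirement $s>3$, needed so that the highest-order term $v^kv_{xxx}$ is integrable against $v$ and the integration by parts is legitimate; the derivation of \eqref{periodic Cauchy problem transform} itself is, by contrast, just the scalar product rule with the linear noise cancelling.
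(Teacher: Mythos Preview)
Your proof is correct and follows essentially the same approach as the paper. The paper also derives \eqref{periodic Cauchy problem transform} by the It\^o product rule (stated as ``direct computation'') and obtains the regularity exactly as you do via Lemma \ref{F lemma}; for \eqref{H1 conservation} the paper writes out the local form $v_t-v_{xxt}+(k+2)\beta^k v^kv_x=(k+1)\beta^kv^{k-1}v_xv_{xx}+\beta^kv^kv_{xxx}$, multiplies by $v$, and integrates using $(k+1)v^kv_xv_{xx}+v^{k+1}v_{xxx}=\partial_x(v^{k+1}v_{xx})$, which is the same cancellation you perform after moving $(1-\partial_{xx}^2)$ onto $v^kv_x+F(v)$.
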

\begin{proof}
	Since $b(t)$ satisfies Assumption \ref{Assumption-3},  $h(t,u)=b(t) u$ satisfies Assumption \ref{Assumption-1}. Consequently, Theorem \ref{Local pathwise solution} implies  that \eqref{GCH linear noise} has a unique maximal pathwise solution $(u,\tau^*)$.
	Direct computation shows
	\begin{align}
	{\rm d}v
	=&\left(-\beta^k v^kv_x-\beta^k F(v)\right){\rm d}t.\label{v equation}
	\end{align}
	Since $v(0)=u_0(\omega,x)$, we see that $v$  satisfies \eqref{periodic Cauchy problem transform}. Moreover,  Theorem \ref{Local pathwise solution} implies $u\in C\left([0,\tau^*);H^s\right)$ $\p-a.s.$, so $v\in C([0,\tau^*); H^{s})$ $\p-a.s.$ Besides, from Lemma \ref{F lemma} and \eqref{periodic Cauchy problem transform}$_1$, we see that for a.e. $\omega\in\Omega$, $v_t=-\beta^k v^kv_x-\beta^k F(v)\in C([0,\tau^*); H^{s-1})$. Hence $v\in C^1\left([0, \tau^*);H^{s-1}\right)$ $\p-a.s.$ 

	Notice that if $s>3$, \eqref{periodic Cauchy problem transform}$_1$ is equivalent to
	\begin{equation}\label{v equation full}
	v_t-v_{xxt}+(k+2)\beta^k v^kv_x=(k+1)\beta^kv^{k-1}v_xv_{xx}+\beta^kv^kv_{xxx},\ \ k\geq1.
	\end{equation}
	Multiplying both sides of the above equation by $v$ and then integrating the resulting equation on $x\in\T$ with noticing that 
	$(k+1)v^{k}v_xv_{xx}+v^{k+1}v_{xxx}=\partial_x\left(v^{k+1}v_{xx}\right),$ 
   we see that for a.e. $\omega\in\Omega$ and for all $t>0$
	$$\frac{\rm d}{{\rm d}t}\int_{\T}\left(v^2+v_x^2\right){\rm d}x=0,$$ which implies \eqref{H1 conservation}.
\end{proof}

\subsection{Global existence: Case 1}

Now we prove the first global existence result.

\begin{proof}[Proof for Theorem \ref{Decay result}]
	To begin with, we apply the operator $D^s$ to \eqref{v equation}, multiply both sides of the resulting equation by $D^sv$ and integrate over $\T$ to obtain that for  a.e. $\omega\in\Omega$,
	\begin{align*}
	\frac{1}{2}\frac{\rm d}{{\rm d}t}\|v(t)\|^2_{H^s}
	=	-\beta^k(\omega,t)	\int_{\T}D^sv\cdot D^s\left[v^kv_x\right]{\rm d}x
	-\beta^k(\omega,t)\int_{\T}D^sv\cdot D^sF(v) {\rm d}x.
	\end{align*}
	Using Lemmas \ref{Kato-Ponce commutator estimate} and \ref{F lemma}, 
	we conclude that there is a $C=C(s)>1$ such that for a.e. $\omega\in\Omega$, 
	\begin{align*}
	\frac{\rm d}{{\rm d}t}\|v(t)\|^2_{H^s}
	\leq C\beta^k(t)\ \|v\|^k_{W^{1,\infty}}\|v\|_{H^s}^2,
	\end{align*}
	where $\beta$ is given in \eqref{transform}.
	Then
	$w={\rm e}^{-\int_0^tb(t') {\rm d} W_{t'}}u={\rm e}^{-\int_0^t\frac{b^2(t')}{2} {\rm d}t'}v$ satisfies
	\begin{align*}
	\frac {\rm d}{ {\rm d}t}\|w(t)\|_{H^s}+\frac{b^2(t)}{2}\|w(t)\|_{H^s}
	\leq C\alpha^k(\omega,t) \|w(t)\|^k_{W^{1,\infty}}\|w(t)\|_{H^s},\ \
	\alpha(\omega,t)={\rm e}^{\int_0^tb(t') {\rm d} W_{t'}}. 
	\end{align*}
	Let $R>1$ and $\lambda_1>2$. Assume 
	$\|u_0\|_{H^s}<\frac{1}{RK}\left(\frac{b_*}{C\lambda_1}\right)^{1/k} $ almost surely. Define
	\begin{align}
	\tau_{1}(\omega)=\inf\left\{t>0:\alpha^k(\omega,t) \|w\|^k_{W^{1,\infty}}
	=\|u\|^k_{W^{1,\infty}}>\frac{b^2(t)}{C\lambda_1 }\right\}.\label{global time tau}
	\end{align}
    Notice that $\|u(0)\|^k_{W^{1,\infty}}\leq K^{k}\|u(0)\|^k_{H^s}<\frac{b_*}{C\lambda_1}$. Therefore we have
	$
	\p\{\tau_{1}>0\}=1,
	$
	and for $t\in[0,\tau_{1})$,
	\begin{align*}
	\frac {\rm d}{ {\rm d}t}\|w(t)\|_{H^s}+\frac{(\lambda_1-2)b^2(t)}{2\lambda_1}\|w(t)\|_{H^s}
	\leq 0.
	\end{align*}
	The above inequality and $w={\rm e}^{-\int_0^tb(t'){\rm d} W_{t'}}u$ imply that for a.e. $\omega\in\Omega$, for any $\lambda_2>\frac{2\lambda_1}{\lambda_1-2}$ and for $t\in[0,\tau_{1})$,
	\begin{align}
	\|u(t)\|_{H^s}
	\leq& \|w_0\|_{H^s}{\rm e}^{\int_0^tb(t')  {\rm d} W_{t'}-\int_0^t\frac{(\lambda_1-2)b^2(t')}{2\lambda_1} {\rm d}t'}\notag\\
	=&\|u_0\|_{H^s}{\rm e}^{\int_0^tb(t') {\rm d} W_{t'}-\int_0^t\frac{b^2(t')}{\lambda_2} {\rm d}t'}{\rm e}^{-\frac{\left((\lambda_1-2)\lambda_2-2\lambda_1\right)}{2\lambda_1\lambda2}\int_0^tb^2(t') {\rm d}t'}
	\label{Extracting some damping}
	\end{align}
	Define the stopping time
	\begin{equation}\label{tau 2 Girsanov}
	\tau_{2}=\tau_2(\omega)
	=\inf\left\{t>0:{\rm e}^{\int_0^tb(t') {\rm d} W_{t'}-\int_0^t\frac{b^2(t')}{\lambda_2} {\rm d}t'}>R\right\}.
	\end{equation}
	Notice that $\p\{\tau_{2}>0\}=1$. From \eqref{Extracting some damping}, we have that almost surely
	\begin{align}
	\|u(t)\|_{H^s}<& \frac{1}{RK}\left(\frac{b_*}{C\lambda_1}\right)^{1/k} \times R\times
	{\rm e}^{-\frac{\left((\lambda_1-2)\lambda_2-2\lambda_1\right)}{2\lambda_1\lambda2}\int_0^tb^2(t') {\rm d}t'}\notag\\
	=&\frac{1}{K}\left(\frac{b_*}{C\lambda_1}\right)^{1/k} 
	{\rm e}^{-\frac{\left((\lambda_1-2)\lambda_2-2\lambda_1\right)}{2\lambda_1\lambda2}\int_0^tb^2(t') {\rm d}t'}
	\leq\frac{1}{K}\left(\frac{b_*}{C\lambda_1}\right)^{1/k},\ \ t\in[0,\tau_{1}\wedge \tau_{2}).
	\label{u energy estimate R theta}
	\end{align}
	Combining \eqref{u energy estimate R theta} and \eqref{global time tau}, we find that
	\begin{equation}\label{tau 1 > tau 2}
	\p\{\tau_{1}\geq\tau_{2}\}=1.
	\end{equation}
	Therefore it follows from \eqref{u energy estimate R theta} that
	$$\p\left\{
	\|u(t)\|_{H^s}<\frac{1}{K}\left(\frac{b_*}{C\lambda_1}\right)^{1/k} 
	{\rm e}^{-\frac{\left((\lambda_1-2)\lambda_2-2\lambda_1\right)}{2\lambda_1\lambda2}\int_0^tb^2(t') {\rm d}t'}	\ {\rm\ for\ all}\ t>0
	\right\}\geq
	\p\{\tau_{2}=+\infty\}.$$
	We apply \ref{exit time eta} in Lemma \ref{eta Lemma} to find that
	\begin{equation*}
	\p\{\tau_{2}=+\infty\}>1-\left(\frac{1}{R}\right)^{2/\lambda_2},
	\end{equation*}
	which completes the proof.
\end{proof}

\subsection{Global existence: Case 2} Now we prove Theorem \ref{Global existence result}.
Let $\beta(\omega,t)$ be given in \eqref{transform}.
From Proposition \ref{pathwise solutions v}, we see that for a.e. $\omega\in\Omega$, $v(\omega,t,x)$  solves \eqref{periodic Cauchy problem transform} on $[0,\tau^*)$. Moreover, since $H^s\hookrightarrow C^2$ for $s>3/2$, we have $v,v_x\in C^1\left([0, \tau^*)\times\T\right)$. Then for a.e. $\omega\in\Omega$, for any $x\in\T$, the problem
\begin{equation} \label{particle line}
\left\{\begin{aligned}
&\frac{dq(\omega,t,x)}{dt}=\beta^k(\omega,t)v^k(\omega,t,q(\omega,t,x)),\ \ \ \ t\in[0,\tau^*),\\
&q(\omega,0,x)=x,\ \ \ x\in \T,
\end{aligned} \right.
\end{equation}
has a unique solution $q(\omega,t,x)$ such that $q(\omega,t,x)\in C^1([0,\tau^*)\times \T)$ almost surely.
Moreover, differentiating \eqref{particle line} with respect to $x$ yields that for a.e. $\omega\in\Omega$,
\begin{equation*}
\left\{\begin{aligned}
&\frac{dq_x}{dt}=k\beta^k(\omega,t)v^{k-1}v_xq_x,\ \ \ \ t\in[0,\tau^*),\\
&q_x(\omega,0,x)=1,\ \ \ x\in \T.
\end{aligned} \right.
\end{equation*}
For a.e. $\omega\in\Omega$, we solve the above equation to obtain
$$q_x(\omega,t,x)=\exp{\left(\int_0^tk\beta^k(\omega,t')v^{k-1}v_x(\omega,t',q(\omega,t',x))\ {\rm d}t'\right)}.$$ Thus for all $(t,x)\in[0, \tau^*)\times \T$, we find $q_x>0$ almost surely.  
\begin{Lemma}\label{same sign with initial data}
	Let $s>3$, $V_0(\omega,x)=(1-\partial_{xx}^2)u_0(\omega,x)$ and $V(\omega,t,x)=v(\omega,t,x)-v_{xx}(\omega,t,x)$, where $v(\omega,t,x)$ solves \eqref{periodic Cauchy problem transform} on $[0,\tau^*)$ $\p-a.s.$ Then for all $(t,x)\in[0, \tau^*)\times \T$,
	\begin{align*}
	{\rm sign}(v)={\rm sign}(V)={\rm sign}&(V_0) \ \  \p-a.s.
	\end{align*}
\end{Lemma}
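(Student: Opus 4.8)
The plan is to track the ``momentum density'' $V=v-v_{xx}=(1-\partial_{xx}^2)v$, which satisfies a pure transport equation along the flow \eqref{particle line}, and then to recover the sign of $v$ from that of $V$ via the positivity of the Green's kernel $G_\T$.

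First I would rewrite the full transformed equation \eqref{v equation full} in terms of $V$. Since $s>3$, Proposition \ref{pathwise solutions v} gives $v\in C^1([0,\tau^*);H^{s-1})$ with $H^s\hookrightarrow C^2(\T)$, so all derivatives below are classical and $V\in C([0,\tau^*)\times\T)$ for a.e.\ $\omega$. Substituting $v_{xx}=v-V$ and $v_{xxx}=v_x-V_x$ into \eqref{v equation full} and observing that the resulting $\beta^k v^k v_x$ terms cancel, I obtain the transport equation
\begin{equation*}
V_t+\beta^k v^k V_x+(k+1)\beta^k v^{k-1}v_x\,V=0,\qquad t\in[0,\tau^*).
\end{equation*}

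Next I would evaluate $V$ along the characteristics $q(\omega,t,x)$ from \eqref{particle line}. Since $\frac{dq}{dt}=\beta^k v^k(\omega,t,q)$, the chain rule converts the transport equation into the linear ODE
\begin{equation*}
\frac{d}{dt}V(\omega,t,q(\omega,t,x))=-(k+1)\beta^k(\omega,t)v^{k-1}v_x(\omega,t,q)\,V(\omega,t,q),
\end{equation*}
whose solution is
\begin{equation*}
V(\omega,t,q(\omega,t,x))=V_0(\omega,x)\exp\!\left(-(k+1)\int_0^t\beta^k(\omega,t')v^{k-1}v_x(\omega,t',q)\,{\rm d}t'\right).
\end{equation*}
For a.e.\ $\omega$ and each $t<\tau^*$ the integrand is continuous on the compact interval $[0,t]$, so the exponential factor is finite and strictly positive; hence ${\rm sign}\,V(\omega,t,q(\omega,t,x))={\rm sign}\,V_0(\omega,x)$ for every $x\in\T$. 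Then I would upgrade this to a statement on all of $\T$: the computation preceding the lemma already yields $q_x(\omega,t,x)>0$, so $x\mapsto q(\omega,t,x)$ is an orientation-preserving $C^1$-diffeomorphism of the circle; thus for fixed $t$ every $y\in\T$ is $q(\omega,t,x)$ for a unique $x$, and on the events $\{V_0>0\ \text{on}\ \T\}$ and $\{V_0<0\ \text{on}\ \T\}$ the function $V(\omega,t,\cdot)$ inherits the constant sign of $V_0$. Finally, since $v=(1-\partial_{xx}^2)^{-1}V=G_\T*V$ with $G_\T>0$ by \eqref{Helmboltz operator} (as $\cosh>0$ and $\sinh\pi>0$), convolving a sign-definite function with a strictly positive kernel preserves the sign, giving ${\rm sign}\,v={\rm sign}\,V={\rm sign}\,V_0$.

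The main obstacle is not any single estimate but the measure-theoretic and regularity bookkeeping that makes the characteristic argument rigorous for a.e.\ $\omega$: I must ensure that $v$ is smooth enough (here $s>3$ secures $v,v_x\in C^1$) for \eqref{particle line} to admit a unique $C^1$ flow on $[0,\tau^*)$, that this flow remains a diffeomorphism via $q_x>0$, and that the integrating factor stays finite on each $[0,t]$ with $t<\tau^*$. Once these are in place, the positivity of $G_\T$ closes the argument immediately.
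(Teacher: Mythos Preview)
Your proof is correct and follows essentially the same route as the paper: derive the transport equation $V_t+\beta^k v^k V_x+(k+1)\beta^k v^{k-1}v_x\,V=0$, integrate it along the characteristics \eqref{particle line}, and use the positivity of $G_\T$ to pass the sign of $V$ to $v$. The only cosmetic difference is that the paper packages the integrating factor as the conserved quantity $V(\omega,t,q)\,q_x^{(k+1)/k}$ (noting $q_x=\exp\!\big(\int_0^t k\beta^k v^{k-1}v_x\,{\rm d}t'\big)$, so $q_x^{(k+1)/k}$ is precisely the reciprocal of your exponential), whereas you solve the linear ODE along characteristics directly; the two are algebraically identical.
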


\begin{proof}
	We first notice that \eqref{periodic Cauchy problem transform}$_1$ is equivalent to \eqref{v equation full}. Therefore for a.e. $\omega\in \Omega$,  $V=v-v_{xx}$ satisfies
	\begin{equation*}
	V_{t}+\beta^k v^kV_{x}+(k+1)\beta^k V v^{k-1}v_{x}=0.
	\end{equation*}
	Thus for a.e. $\omega\in \Omega$, we have
	\begin{align*}
	\frac{\rm d}{{\rm d}t}\left[V(\omega,t,q(\omega,t,x))q_x^{\frac{k+1}{k}}(\omega,t,x)\right]
	=&V_tq^{\frac{k+1}{k}}_x+V_xq_tq^{\frac{k+1}{k}}_x+\frac{k+1}{k}Vq_x^{\frac{k+1}{k}-1}q_{xt}\\
	=&q_x^{\frac{k+1}{k}}\left[V_t+V_x\left(\beta^k v^k\right)+\frac{k+1}{k}Vq_x^{-1} \left(k\beta^k v^{k-1}v_xq_x\right)\right]\\
	=&q_x^{\frac{k+1}{k}}\left[V_t+\beta^k v^k V_x+(k+1)\beta^k Vv^{k-1} v_x\right]=0.
	\end{align*}
	Notice that $q_x(\omega,0,x)=1$ and $q_x>0$ almost surely. Then we have that
	${\rm sign}(V)={\rm sign}(V_0)$ $\p-a.s.$ Since $v=G_{\T}*V$ with $G_{\T}>0$ given in \eqref{Helmboltz operator}, we have ${\rm sign}(v)={\rm sign}(V)$ $\p-a.s.$
\end{proof}

\begin{Lemma}\label{vx bounded lemma}
	Let all the conditions as in the statement of Proposition \ref{pathwise solutions v} hold true. If
	\begin{align*}
	\p\{V_0(\omega,x)>0,\  \forall\ x\in\T\}=p,\ \
	\p\{V_0(\omega,x)<0,\  \forall\ x\in\T\}=q,
	\end{align*}
	for some $p,q\in[0,1]$, then the maximal pathwise solution $u$ to \eqref{GCH linear noise} satisfies
	\begin{equation*}
	\p\left\{\|u_x(\omega,t)\|_{L^{\infty}}\leq \|u(\omega,t)\|_{L^{\infty}}\lesssim\beta(\omega,t)\|u_0\|_{H^1},\  \forall\ t\in[0,\tau^*)\right\}\geq p+q.
	\end{equation*}
\end{Lemma}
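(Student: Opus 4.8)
The plan is to reduce the whole statement to a single pointwise comparison $|v_x(\omega,t,x)|\le|v(\omega,t,x)|$ for the transformed variable $v=u/\beta$ and then transfer it back to $u$ using $u=\beta v$ with $\beta>0$. First I would record that, by Lemma \ref{same sign with initial data}, on the event $\{V_0(\omega,\cdot)>0 \text{ on }\T\}$ (respectively $\{V_0(\omega,\cdot)<0 \text{ on }\T\}$) one has ${\rm sign}(v)={\rm sign}(V)={\rm sign}(V_0)$ for \emph{all} $(t,x)\in[0,\tau^*)\times\T$; thus $V=v-v_{xx}$ keeps a fixed sign along the entire evolution, not merely at $t=0$. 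In particular $|v|=\pm v$ consistently on such an event.

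The key step is to exploit the explicit periodic kernel. Writing $v=G_\T*V$ with $G_\T$ from \eqref{Helmboltz operator} and splitting the convolution at the singularity $y=x$, one obtains for $x\in[0,2\pi)$
\begin{align*}
v(x)&=\frac{1}{2\sinh\pi}\left[\int_0^x\cosh(x-y-\pi)V(y)\,{\rm d}y+\int_x^{2\pi}\cosh(x-y+\pi)V(y)\,{\rm d}y\right],\\
v_x(x)&=\frac{1}{2\sinh\pi}\left[\int_0^x\sinh(x-y-\pi)V(y)\,{\rm d}y+\int_x^{2\pi}\sinh(x-y+\pi)V(y)\,{\rm d}y\right],
\end{align*}
where the boundary contributions produced by differentiation cancel because $\cosh$ is even. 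Using $\cosh z\mp\sinh z={\rm e}^{\mp z}$, I would then rewrite $v\mp v_x$ as an integral of a \emph{strictly positive} kernel against $V$; hence, whenever $V$ keeps a fixed sign, both $v-v_x$ and $v+v_x$ inherit that sign, which yields $|v_x|\le|v|$ pointwise on $[0,\tau^*)\times\T$.

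Once this pointwise bound is secured, the remaining steps are soft. The Sobolev embedding $H^1(\T)\hookrightarrow L^\infty(\T)$ gives $\|v(t)\|_{L^\infty}\lesssim\|v(t)\|_{H^1}$, and the $H^1$-conservation \eqref{H1 conservation} from Proposition \ref{pathwise solutions v} upgrades this to $\|v(t)\|_{L^\infty}\lesssim\|u_0\|_{H^1}$. Returning to $u=\beta v$, so that $u_x=\beta v_x$ and $\beta>0$, I obtain $\|u_x(t)\|_{L^\infty}=\beta\|v_x(t)\|_{L^\infty}\le\beta\|v(t)\|_{L^\infty}=\|u(t)\|_{L^\infty}\lesssim\beta(\omega,t)\|u_0\|_{H^1}$ for all $t\in[0,\tau^*)$. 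Finally, the events $\{V_0>0\text{ on }\T\}$ and $\{V_0<0\text{ on }\T\}$ are disjoint with probabilities $p$ and $q$, and the displayed chain of inequalities holds almost surely on each of them; hence the target event contains their union, whose probability is $p+q$, giving the claimed lower bound.

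The main obstacle I anticipate is precisely the comparison $|v_x|\le|v|$: it rests on the exact form of the periodic Green's function, on correctly splitting the convolution across the jump of $G_\T'$ at $y=x$ and verifying that the Leibniz boundary terms cancel, and on the sign of $V$ being propagated in time by Lemma \ref{same sign with initial data} rather than merely holding initially. Everything downstream---the Sobolev embedding, the $H^1$ conservation, the $\beta$-rescaling, and the disjointness of the two sign events---is routine.
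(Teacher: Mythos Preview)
Your proposal is correct and follows essentially the same route as the paper: the paper likewise writes $v\pm v_x$ as convolutions of strictly positive exponential kernels against $V$ (its identities \eqref{u+ux}--\eqref{u-ux}), invokes Lemma \ref{same sign with initial data} for sign propagation to obtain $|v_x|\le|v|$ on each sign event, and then concludes via $H^1\hookrightarrow L^\infty$, the $H^1$-conservation \eqref{H1 conservation}, the relation $u=\beta v$, and the disjointness of the two events. Your derivation of the kernel identities via splitting the convolution and cancelling the boundary terms is just a slightly more explicit version of what the paper states directly.
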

\begin{proof}
	It is easy to see that for a.e. $\omega\in \Omega$ and for all $(t,x)\in[0, \tau^*)\times \T$,
	\begin{align}
	\left[v+v_x\right](\omega,t,x)=&\frac{1}{2\sinh(\pi)}
	\int^{2\pi}_0{\rm e}^{(x-y-2\pi\left[\frac{x-y}{2\pi}\right]-\pi)}V(\omega,t,y)\ {\rm d}y,\label{u+ux}\\
	\left[v-v_x\right](\omega,t,x)
	=&\frac{1}{2\sinh(\pi)}
	\int^{2\pi}_0{\rm e}^{(y-x+2\pi\left[\frac{x-y}{2\pi}\right]+\pi)}V(\omega,t,y)\ {\rm d}y.\label{u-ux}
	\end{align}
	Then one can employ \eqref{u+ux}, \eqref{u-ux} and Lemma \ref{same sign with initial data} to obtain that for a.e. $\omega\in \Omega$ and for all $(t,x)\in[0, \tau^*)\times \T$,
	\begin{equation}\label{ux u set}
	\left\{\begin{aligned}
	-v(\omega,t,x)\leq v_x(\omega,t,x)\leq v(\omega,t,x),\ \ & {\rm if}\ \ V_0(\omega,x)=(1-\partial_{xx}^2)u_0(\omega,x)>0, \\
	v(\omega,t,x)\leq v_x(\omega,t,x)\leq -v(\omega,t,x),\ \ & {\rm if}\ \ V_0(\omega,x)=(1-\partial_{xx}^2)u_0(\omega,x)<0.
	\end{aligned} \right.
	\end{equation}
	Notice that
	\begin{align}\label{non intersection}
	\left\{V_0(\omega,x)>0\right\}\bigcap \left\{V_0(\omega,x)<0\right\}=\emptyset.
	\end{align}
	Combining \eqref{ux u set} and \eqref{non intersection} yields
	\begin{equation}
	\p\left\{|v_x(\omega,t,x)|\leq |v(\omega,t,x)|,\  \forall\ (t,x)\in[0, \tau^*)\times \T\right\}\geq p+q.\label{v<vx probability}
	\end{equation}
	In view of $H^1\hookrightarrow L^{\infty}$, \eqref{H1 conservation} and \eqref{v<vx probability}, we arrive at
	\begin{equation*}
	\p\left\{\|v_x(\omega,t)\|_{L^{\infty}}
	\leq \|v(\omega,t)\|_{L^{\infty}}
	\lesssim \|v(\omega,t)\|_{H^1}
	=\|u_0\|_{H^1},\ \forall\ t\in[0,\tau^*)\right\}\geq p+q.
	\end{equation*}
	Via \eqref{transform}, we obtain the desired estimate.
\end{proof}

\begin{proof}[Proof for Theorem \ref{Global existence result}]
	
	Let $(u,\tau^*)$ be the maximal pathwise solution to \eqref{GCH linear noise}. 
	Then Lemma \ref{vx bounded lemma} implies that 
	\begin{equation*}
	\p\left\{\|u\|_{W^{1,\infty}}\lesssim 2\beta(\omega,t)\|u_0\|_{H^1},\  \forall\ t\in[0,\tau^*)\right\}\geq p+q.
	\end{equation*}
	It follows from \ref{eta->0} in Lemma \ref{eta Lemma} that $\sup_{t>0}\beta(\omega,t)<\infty$ $\p-a.s.$ Then we can infer from \eqref{Blow-up criteria common}   that $\p\{\tau^*=\infty\}\geq p+q$.
	That is to say,
	$
	\p\left\{u\ {\rm exists\ globally}\right\}\geq p+q
	$.
\end{proof}

\section*{Acknowledgement}
The authors would like to express their great gratitude to the anonymous referees for their valuable suggestions, which have led to a meaningful improvement of this paper.

\appendix\section{Auxiliary results}\label{appendix}

In this section we formulate and prove some estimates employed in above proofs.  We first recall the Friedrichs mollifier $J_{\e}$ defined as $$J_{\varepsilon}f(x)=j_{\varepsilon}\ast f(x).$$ Here
$\ast$ stands for the convolution, $j_{\varepsilon}(x)=\sum_{k\in{\Z}}\widehat{j}(\varepsilon k){\rm e}^{{\rm i}xk}$ and $j(x)$ is a Schwartz function satisfying $0\leq\widehat{j}(\xi)\leq1$ for all the $\xi\in \R$ and $\widehat{j}(\xi)=1$ for any $\xi\in[-1,1]$. We also need another mollifier $T_\e$ on $\mathbb{T}^d$ with $d\geq1$ as
$$T_\e f(x):=(1-\e^2 \Delta)^{-1}f(x)= \sum_{k\in\Z^d} \left(1+\e^2 |k|^2\right)^{-1}  \widehat{f}(k)\, {\rm e}^{{\rm i}x\cdot k},\ \ \e\in(0,1).$$
From the construction, we see that $J_\e$ and $T_\e$ enjoy the following estimates, see \cite{Tang-2018-SIMA, Tang-2020-Arxiv} for example,
\begin{align}
[D^s,J_{\varepsilon}]=[D^s,T_{\varepsilon}]=0,\label{mollifier property 3}
\end{align}
\begin{align}
(J_{\varepsilon}f, g)_{L^2}=(f, J_{\varepsilon}g)_{L^2},\ (T_{\varepsilon}f, g)_{L^2}&=(f, T_{\varepsilon}g)_{L^2},\label{mollifier property 4}
\end{align}
\begin{align}
\|J_{\varepsilon}u\|_{H^s},\|T_{\varepsilon}u\|_{H^s}&\leq \|u\|_{H^s}.\label{mollifier property 5}
\end{align}

\begin{Lemma}[\cite{Tang-2020-Arxiv}]\label{Te commutator} 
	Let $d\geq1$ and $f,g:\T^d\rightarrow\R^d$ such that $g\in W^{1,\infty}$ and $f\in L^2$. Then for some $C>0$,
	\begin{align*}
		\|[T_{\varepsilon}, (g\cdot \nabla)]f\|_{L^2}
		\leq C\|\nabla g\|_{L^\infty}\|f\|_{L^2}.
	\end{align*}
\end{Lemma}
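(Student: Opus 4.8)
The plan is to represent the commutator as an integral operator against the kernel of $T_\varepsilon$ and to transfer the derivative off $f$ onto that kernel by integration by parts, thereby converting the apparent loss of one derivative into a difference $g(y)-g(x)$ that is controlled by $\|\nabla g\|_{L^\infty}$. Writing $T_\varepsilon f=K_\varepsilon * f$ for the periodic convolution kernel $K_\varepsilon$ of $(1-\varepsilon^2\Delta)^{-1}$ (the function on $\T^d$ with Fourier coefficients $(1+\varepsilon^2|k|^2)^{-1}$), I would first take $f$ smooth, expand $g\cdot\nabla=\sum_j g_j\partial_j$, and integrate by parts in $y$ (no boundary terms occur on $\T^d$, and $g\in W^{1,\infty}\subset W^{1,1}$ makes this licit) to obtain the identity
\begin{equation*}
[T_\varepsilon,g\cdot\nabla]f(x)=\sum_j\int_{\T^d}(\partial_jK_\varepsilon)(x-y)\big(g_j(y)-g_j(x)\big)f(y)\,dy-\int_{\T^d}K_\varepsilon(x-y)(\nabla\cdot g)(y)f(y)\,dy.
\end{equation*}
The right-hand side involves no derivative of $f$, so once it is bounded by $C\|\nabla g\|_{L^\infty}\|f\|_{L^2}$ the estimate extends to all $f\in L^2$ by density, which is the asserted meaning of the commutator.

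For the second term I would use that $K_\varepsilon\ge0$ with $\int_{\T^d}K_\varepsilon=1$ (its zeroth Fourier coefficient), so that $\|K_\varepsilon\|_{L^1}=1$ and Young's inequality yields a bound by $\|\nabla\cdot g\|_{L^\infty}\|f\|_{L^2}\lesssim\|\nabla g\|_{L^\infty}\|f\|_{L^2}$. For the first term I would apply the mean value estimate $|g_j(y)-g_j(x)|\le\|\nabla g\|_{L^\infty}\,|x-y|_{\T^d}$, with $|\cdot|_{\T^d}$ the geodesic distance, to dominate the term pointwise by $\|\nabla g\|_{L^\infty}$ times the convolution of $|f|$ with the nonnegative kernel $\Psi_\varepsilon(z):=|z|_{\T^d}\,|\nabla K_\varepsilon(z)|$; Young's inequality then controls it by $\|\nabla g\|_{L^\infty}\|\Psi_\varepsilon\|_{L^1}\|f\|_{L^2}$.

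The one genuine point, and the step I expect to carry the weight of the argument, is the uniform bound $\sup_{0<\varepsilon<1}\|\Psi_\varepsilon\|_{L^1(\T^d)}<\infty$. I would obtain this from the scaling of the Bessel kernel: on $\R^d$ the kernel $K^{\R^d}_1$ of $(1-\Delta)^{-1}$ is positive, exponentially decaying, and smooth away from the origin, with a singularity at $0$ mild enough that $|w|\,|\nabla K^{\R^d}_1(w)|\in L^1(\R^d)$; since $K^{\R^d}_\varepsilon(z)=\varepsilon^{-d}K^{\R^d}_1(z/\varepsilon)$, the quantity $\big\||z|\,|\nabla K^{\R^d}_\varepsilon|\big\|_{L^1(\R^d)}$ is scale invariant, hence constant in $\varepsilon$ (in $d=1$ one has explicitly $K^{\R}_\varepsilon(x)=\tfrac{1}{2\varepsilon}e^{-|x|/\varepsilon}$, for which the $L^1$ norm equals $1$). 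On the torus $K_\varepsilon$ is the periodization of $K^{\R^d}_\varepsilon$, whose nonzero lattice images are exponentially small for small $\varepsilon$ and uniformly smooth as $\varepsilon\uparrow1$, so they add only a bounded correction and the uniform $L^1$ bound on $\Psi_\varepsilon$ persists. Combining the two estimates gives the claim with a constant $C=C(d)$ independent of $\varepsilon$, $f$, and $g$.
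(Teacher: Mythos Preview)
The paper does not actually prove this lemma; it merely states it with a citation to \cite{Tang-2020-Arxiv}. So there is no in-paper argument to compare against, and your proof stands on its own.

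Your kernel approach is correct and standard. The integration-by-parts identity you wrote is exactly right, and both terms are bounded as you describe. The only step you flag as ``the genuine point'' --- the uniform $L^1$ bound on $\Psi_\varepsilon$ --- can in fact be carried out cleanly: since $|z|_{\T^d}=\min_{n\in\Z^d}|z+2\pi n|\le |z+2\pi n|$ for every lattice point $n$, and since the torus kernel is the periodization $K_\varepsilon(z)=\sum_{n}K_\varepsilon^{\R^d}(z+2\pi n)$, one gets
\[
\int_{\T^d}|z|_{\T^d}\,|\nabla K_\varepsilon(z)|\,dz
\;\le\;
\int_{\T^d}\sum_{n}|z+2\pi n|\,\bigl|\nabla K_\varepsilon^{\R^d}(z+2\pi n)\bigr|\,dz
\;=\;
\int_{\R^d}|w|\,\bigl|\nabla K_\varepsilon^{\R^d}(w)\bigr|\,dw,
\]
which is precisely your scale-invariant $\R^d$ quantity. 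This makes the periodization step rigorous without any separate discussion of ``small $\varepsilon$'' versus ``$\varepsilon$ near $1$''. With that observation your argument is complete, yielding a constant $C=C(d)$ independent of $\varepsilon$.
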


\begin{Lemma}[\cite{Kato-Ponce-1988-CPAM}]\label{Kato-Ponce commutator estimate}
	If $f\in H^s\bigcap W^{1,\infty},\ g\in H^{s-1}\bigcap L^{\infty}$ for $s>0$, then
	$$
	\|\left[D^s,f\right]g\|_{L^2}\leq C_s(\|D^sf\|_{L^2}\|g\|_{L^{\infty}}+\|\partial_xf\|_{L^{\infty}}\|D^{s-1}g\|_{L^2}).
	$$
\end{Lemma}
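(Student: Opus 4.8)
The plan is to prove the commutator estimate by a Littlewood--Paley/Bony paraproduct decomposition, reducing $[D^s,f]g=D^s(fg)-fD^sg$ to three frequency-localized pieces according to whether $f$ sits at lower, comparable, or higher frequency than $g$. Writing $\mathrm{Id}=\sum_q\Delta_q$ for the (inhomogeneous) dyadic decomposition on $\T$ and $S_q=\sum_{q'<q}\Delta_{q'}$, and using that $D^s$ commutes with each projection (it is a Fourier multiplier), one obtains the exact identity
\[
[D^s,f]g=\sum_q[D^s,S_{q-1}f]\Delta_q g+\sum_q[D^s,\Delta_q f]S_{q-1}g+\sum_{|q-q'|\le1}[D^s,\Delta_q f]\Delta_{q'}g,
\]
which I call Term A (low--high), Term B (high--low) and Term C (high--high). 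The guiding principles are that $D^s$ acts like multiplication by $2^{qs}$ on frequencies of size $2^q$, that $\Delta_q$ and $S_q$ are uniformly bounded on both $L^2$ and $L^\infty$, and that the summands of each term are frequency-localized so that almost-orthogonality lets one sum in $\ell^2$. The $L^\infty$-boundedness of the projections is precisely what will allow the $L^\infty$ norms to appear on the right, rather than the $\ell^1$-type Fourier norms that a crude coefficient bound would produce.

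Terms B and C are routine. For Term B the product $\Delta_q f\cdot S_{q-1}g$ is supported at frequency $\sim2^q$, so Bernstein gives $\|[D^s,\Delta_q f]S_{q-1}g\|_{L^2}\lesssim 2^{qs}\|\Delta_q f\|_{L^2}\|S_{q-1}g\|_{L^\infty}\lesssim \|D^s\Delta_q f\|_{L^2}\|g\|_{L^\infty}$, and the $\ell^2$ sum yields $\|D^sf\|_{L^2}\|g\|_{L^\infty}$. For Term C the two factors share a comparable frequency $2^q$, the output frequency may be as low as $O(1)$, and after grouping by output frequency the summation is closed by a Hardy-type inequality that requires exactly the hypothesis $s>0$; the contribution is again bounded by $\|D^sf\|_{L^2}\|g\|_{L^\infty}$.

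The heart of the matter is Term A, which must produce \emph{exactly} the factor $\|\partial_x f\|_{L^\infty}$ paired with $\|D^{s-1}g\|_{L^2}$. Here $a:=S_{q-1}f$ is localized to $|\xi|<2^{q-1}$ while $b:=\Delta_q g$ lives at $|\xi|\sim2^q$, so on the relevant support $|k-j|\le|j|/2$ and $|k|\sim|j|\sim 2^q$. The key sub-estimate is $\|[D^s,a]b\|_{L^2}\lesssim\|\partial_x a\|_{L^\infty}\,2^{q(s-1)}\|b\|_{L^2}$. I would prove it by expanding the symbol with the integral Taylor formula
\[
(1+k^2)^{s/2}-(1+j^2)^{s/2}=(k-j)\,\phi'(j)+(k-j)^2\!\int_0^1(1-\theta)\,\phi''(j+\theta(k-j))\,d\theta,\qquad \phi(\xi)=(1+\xi^2)^{s/2},
\]
and, crucially, keeping the factor $(k-j)\widehat a(k-j)$ attached to $\partial_x a$ in \emph{physical} space rather than passing to absolute values of coefficients. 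The leading term factors as the product $(-i\,\partial_x a)\cdot\big(\phi'(D)b\big)$, where the multiplier $\phi'(D)$ has order $s-1$, so $\|\phi'(D)b\|_{L^2}\lesssim 2^{q(s-1)}\|b\|_{L^2}$; H\"older then delivers $\|\partial_x a\|_{L^\infty}2^{q(s-1)}\|b\|_{L^2}$. Since $|k-j|\lesssim 2^q$ on the support, the quadratic remainder satisfies $|(k-j)^2\phi''(\cdots)|\lesssim |k-j|\,2^{q(s-1)}$, i.e.\ it is of the same order as the leading term and again carries only one factor $(k-j)\widehat a(k-j)\leftrightarrow\partial_x a$. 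Summing the sub-estimate in $\ell^2$ over $q$, together with $\|\partial_x S_{q-1}f\|_{L^\infty}\lesssim\|\partial_x f\|_{L^\infty}$, gives $\|\partial_x f\|_{L^\infty}\|D^{s-1}g\|_{L^2}$.

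I expect the low--high sub-estimate to be the main obstacle. The difficulty is sharp: a naive bound on the bilinear Fourier multiplier that takes absolute values of $\widehat f$ would replace $\|\partial_x f\|_{L^\infty}$ by $\|\widehat{\partial_x f}\|_{\ell^1}$, morally a higher Sobolev norm, which is too strong. Avoiding this loss forces one to retain the physical-space product structure and to exploit both the Taylor expansion of $(1+\xi^2)^{s/2}$ and the frequency separation $|k-j|\ll|j|$; for the remainder term this is most cleanly licensed by the Coifman--Meyer bilinear multiplier theorem, whose precise role is to justify $L^\infty\times L^2\to L^2$ bounds for symbols of this type. Once Term A is controlled, assembling it with Terms B and C yields the stated inequality.
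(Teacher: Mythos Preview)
The paper does not prove this lemma; it is stated in the appendix as a classical result and merely cited to Kato--Ponce~\cite{Kato-Ponce-1988-CPAM}. There is therefore no ``paper's own proof'' to compare against.

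Your proposal is a correct and standard modern proof via Littlewood--Paley paraproduct decomposition. The three-term Bony splitting, the treatment of Terms~B and~C, and the identification of Term~A as the crucial piece requiring the mean-value/Taylor expansion of the symbol $(1+\xi^2)^{s/2}$ are all accurate. Your emphasis on retaining the physical-space product structure (so that $\|\partial_x f\|_{L^\infty}$ rather than $\|\widehat{\partial_x f}\|_{\ell^1}$ appears) is exactly the point, and invoking Coifman--Meyer for the remainder symbol is a clean way to close it. The original 1988 argument of Kato and Ponce proceeds somewhat differently---it works more directly with the kernel representation of $D^s$ and pointwise estimates rather than a full dyadic decomposition---but the paraproduct route you outline has since become the textbook method and yields the same bound with the same hypothesis $s>0$.
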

\begin{Lemma}[\cite{Kato-Ponce-1988-CPAM}]
	\label{Moser estimate}
	Let $s>0$, for all $f,g \in H^s\bigcap L^{\infty}$,
	$$\|fg\|_{H^s}\leq C_s(\|f\|_{H^s}\|g\|_{L^{\infty}}+\|f\|_{L^{\infty}}\|g\|_{H^s}).$$
\end{Lemma}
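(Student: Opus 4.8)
The plan is to prove this tame product estimate by Littlewood--Paley localization together with Bony's paraproduct decomposition, since the delicate point is to produce the factor $\|f\|_{L^\infty}$ (rather than a stronger norm such as $\|f\|_{H^s}$, or the Fourier-coefficient sum $\sum_k|\widehat f(k)|$) multiplying the high-frequency part of $g$. First I would fix a dyadic partition of unity on $\T$, writing $f=\sum_{j\ge-1}\Delta_j f$ with $\Delta_j$ the usual Littlewood--Paley blocks and $S_j=\sum_{j'<j}\Delta_{j'}$, and recall the equivalent characterization $\|u\|_{H^s}^2\sim\sum_{j\ge-1}2^{2js}\|\Delta_j u\|_{L^2}^2$. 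The whole estimate then reduces to controlling $2^{js}\|\Delta_j(fg)\|_{L^2}$ in $\ell^2_j$.

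The second step is Bony's decomposition
\begin{equation*}
fg=T_fg+T_gf+R(f,g),\quad T_fg=\sum_{j}S_{j-2}f\,\Delta_j g,\quad R(f,g)=\sum_{j}\Delta_j f\,\widetilde{\Delta}_j g,
\end{equation*}
where $\widetilde{\Delta}_j=\sum_{|j'-j|\le1}\Delta_{j'}$. For the paraproduct $T_fg$ each summand $S_{j-2}f\,\Delta_j g$ has Fourier support in an annulus of size $\sim 2^j$, so the summands are almost orthogonal and
\begin{equation*}
\|T_fg\|_{H^s}^2\lesssim\sum_j 2^{2js}\|S_{j-2}f\|_{L^\infty}^2\|\Delta_j g\|_{L^2}^2\le \|f\|_{L^\infty}^2\sum_j 2^{2js}\|\Delta_j g\|_{L^2}^2\lesssim\|f\|_{L^\infty}^2\|g\|_{H^s}^2,
\end{equation*}
where the decisive inequality $\|S_{j-2}f\|_{L^\infty}\le C\|f\|_{L^\infty}$ is the $L^1$-boundedness of the low-frequency convolution kernel; this is exactly where $\|f\|_{L^\infty}$, and not a stronger norm, enters. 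By symmetry $\|T_gf\|_{H^s}\lesssim\|g\|_{L^\infty}\|f\|_{H^s}$.

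The remaining and most delicate term is the resonant remainder $R(f,g)$, and here the hypothesis $s>0$ is essential. Each summand $\Delta_j f\,\widetilde{\Delta}_j g$ has Fourier support in a ball of radius $\sim 2^j$, so $\Delta_q R(f,g)$ receives contributions only from $j\ge q-N_0$ for a fixed $N_0$. Estimating $\|\Delta_j f\,\widetilde{\Delta}_j g\|_{L^2}\le\|\Delta_j f\|_{L^\infty}\|\widetilde{\Delta}_j g\|_{L^2}\le C\|f\|_{L^\infty}\|\widetilde{\Delta}_j g\|_{L^2}$ and inserting the weight gives
\begin{equation*}
2^{qs}\|\Delta_q R(f,g)\|_{L^2}\lesssim\|f\|_{L^\infty}\sum_{j\ge q-N_0}2^{(q-j)s}\,2^{js}\|\widetilde{\Delta}_j g\|_{L^2}.
\end{equation*}
Because $s>0$, the sequence $\big(2^{ks}\mathbf{1}_{k\le N_0}\big)_k$ lies in $\ell^1$, so discrete Young's inequality applied in $\ell^2_q$ yields $\|R(f,g)\|_{H^s}\lesssim\|f\|_{L^\infty}\|g\|_{H^s}$ (and symmetrically $\|g\|_{L^\infty}\|f\|_{H^s}$). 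Collecting the three bounds gives the claim. I expect the remainder to be the only genuine obstacle: the two paraproducts are tamed by almost-orthogonality alone, whereas the remainder can cascade arbitrarily high frequencies down to a fixed output frequency, and only the positivity $s>0$ makes the resulting geometric series converge; for $s\le0$ the estimate fails in general. A fully self-contained alternative avoiding Littlewood--Paley would be to split $\widehat{fg}(k)=\sum_m\widehat f(k-m)\widehat g(m)$ via $\langle k\rangle^s\lesssim\langle k-m\rangle^s+\langle m\rangle^s$, but that route naturally produces $\sum_k|\widehat g(k)|$ rather than $\|g\|_{L^\infty}$, which is precisely the deficiency the paraproduct repairs.
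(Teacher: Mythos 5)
Your proof is correct. Note, however, that the paper itself offers no argument for this lemma: it is quoted verbatim from Kato--Ponce \cite{Kato-Ponce-1988-CPAM}, so there is no internal proof to match against. Your Bony paraproduct argument is the standard modern route and all three pieces check out: the bound $\|S_{j-2}f\|_{L^\infty}\lesssim\|f\|_{L^\infty}$ (uniform $L^1$ control of the low-pass kernels) correctly produces the tame $L^\infty$ factor in the paraproducts, the annulus/ball support properties justify the almost-orthogonality and the restriction $j\ge q-N_0$ in the remainder, and the discrete Young step is exactly where $s>0$ is consumed, as you rightly emphasize. This differs from the original Kato--Ponce derivation, which works on $\R^n$ and obtains the product and commutator estimates from Coifman--Meyer multilinear multiplier theory together with Leibniz-rule and interpolation arguments rather than from an explicit frequency decomposition. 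What your route buys in the present context is twofold: it is self-contained modulo basic Littlewood--Paley theory, and it transfers verbatim to the torus $\T$ (where this paper actually works), since dyadic blocks and the characterization $\|u\|_{H^s}^2\sim\sum_{j\ge-1}2^{2js}\|\Delta_j u\|_{L^2}^2$ hold there unchanged; the multiplier-theoretic route requires a little more care to periodize. Your closing remark is also apt: the naive splitting $\langle k\rangle^s\lesssim\langle k-m\rangle^s+\langle m\rangle^s$ only yields Wiener-algebra factors $\sum_k|\widehat f(k)|$, which is strictly weaker than $\|f\|_{L^\infty}$, so the paraproduct (or an equivalent device) is genuinely needed.
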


\begin{Lemma}[Proposition 4.2, \cite{Taylor-2003-PAMS}]\label{Taylor}
	If $\rho>3/2$ and $0\leq \eta+1\leq \rho$, then for some $c>0$,
	$$\|[D^{\eta}\partial_x, f]v\|_{L^2}\leq c\|f\|_{H^{\rho}}\|v\|_{H^{\eta}},
	\ \ \ \ \forall\ f\in H^{\rho}, v\in H^{\eta}.$$
\end{Lemma}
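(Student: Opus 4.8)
The plan is to view $D^{\eta}\partial_x$ as a Fourier multiplier on $\T$ with symbol $p(\xi)={\rm i}\xi(1+\xi^2)^{\eta/2}$, which has order $\eta+1$ and is smooth with $|p'(\xi)|\lesssim \langle\xi\rangle^{\eta}$, where $\langle\xi\rangle=(1+\xi^2)^{1/2}$. The whole content of the estimate is that commuting this operator with multiplication by $f$ yields an operator of order only $\eta$, i.e. \emph{one derivative is gained}. Writing the commutator in frequency, $\widehat{[D^{\eta}\partial_x,f]v}(\xi)=\sum_{\zeta\in\Z}\bigl(p(\xi)-p(\xi-\zeta)\bigr)\widehat{f}(\zeta)\widehat{v}(\xi-\zeta)$, I would split the $(\zeta,\xi-\zeta)$ sum in the manner of Bony into a low--high part (where $\langle\zeta\rangle\lesssim\langle\xi-\zeta\rangle$, so $f$ is at the lower frequency), a high--low part, and a diagonal/remainder part, and bound each contribution in $L^2_\xi$.

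The main term is the low--high interaction. There $\langle\xi\rangle\sim\langle\xi-\zeta\rangle$, and the mean value theorem applied to the smooth symbol gives the crucial cancellation $|p(\xi)-p(\xi-\zeta)|\lesssim|\zeta|\,\langle\xi-\zeta\rangle^{\eta}$, replacing a multiplier difference of order $\eta+1$ by $|\zeta|$ times a factor of order $\eta$. The resulting expression is a convolution $a\ast b$ with $a(\zeta)=\langle\zeta\rangle|\widehat f(\zeta)|$ and $b(\mu)=\langle\mu\rangle^{\eta}|\widehat v(\mu)|$, so by Young's inequality $\|a\ast b\|_{\ell^2}\le\|a\|_{\ell^1}\|b\|_{\ell^2}$, and $\|b\|_{\ell^2}=\|v\|_{H^{\eta}}$. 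The sum $\|a\|_{\ell^1}=\sum_{\zeta}\langle\zeta\rangle|\widehat f(\zeta)|$ is controlled by $\|f\|_{H^{\rho}}$ via Cauchy--Schwarz precisely when $2\rho-2>1$; this is exactly where the hypothesis $\rho>3/2$ enters, and it gives this term the bound $\lesssim\|f\|_{H^{\rho}}\|v\|_{H^{\eta}}$.

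For the high--low and diagonal/remainder interactions there is no commutator cancellation to exploit, but none is needed, since $f$ carries $\rho\geq\eta+1$ derivatives to spare. Here I would work directly with the Littlewood--Paley pieces $\Delta_k f$, $\Delta_k v$, estimating the high--low contribution by $\|D^{\eta}\partial_x(S_{k-1}v\,\Delta_k f)\|_{L^2}\lesssim\|S_{k-1}v\,\Delta_k f\|_{H^{\eta+1}}$ and summing the low-frequency part of $v$ against the high-frequency part of $f$. The bookkeeping yields a bound by $\|v\|_{H^{\eta}}\|f\|_{H^{\max\{\eta+1,\,3/2\}}}$, which is $\lesssim\|v\|_{H^{\eta}}\|f\|_{H^{\rho}}$ under $\rho>3/2$ together with $\eta+1\leq\rho$. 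Thus both standing hypotheses are used, and the three contributions combine to close the estimate.

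The hard part is that $\eta$ is permitted to be as small as $-1$, so $H^{\eta}$ need not embed into $L^{\infty}$, and the naive physical-space estimates are unavailable for the factor $v$: in particular the tempting reduction $[D^{\eta}\partial_x,f]v=\partial_x[D^{\eta},f]v+(\partial_x f)D^{\eta}v$, followed by the Kato--Ponce commutator of Lemma \ref{Kato-Ponce commutator estimate} or the Moser estimate of Lemma \ref{Moser estimate}, would force $\|v\|_{L^{\infty}}$ or $\|\partial_x v\|_{L^{\infty}}$ on the right-hand side and therefore fails for low $\eta$. One is consequently obliged to keep the frequency decomposition throughout rather than pass to $L^{\infty}$ norms, and the delicate point is the high--low interaction, in which the low-frequency factor is $v$ (possibly of negative order) and the missing regularity must be borrowed from $f$ up to, but no further than, the threshold $3/2$. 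The passage from the real line to the torus, replacing integrals by sums, is routine and affects none of these estimates.
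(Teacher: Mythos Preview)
The paper does not prove this lemma at all: it is stated in the appendix as an auxiliary result and attributed directly to Taylor (Proposition~4.2 in \cite{Taylor-2003-PAMS}), with no argument given. So there is nothing in the paper to compare your argument against beyond the citation itself.

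Your sketch is in the correct spirit and tracks the paradifferential method Taylor actually uses. The Fourier representation of the commutator, the mean-value cancellation on the low--high piece $|p(\xi)-p(\xi-\zeta)|\lesssim|\zeta|\langle\xi-\zeta\rangle^{\eta}$, and the Young/Cauchy--Schwarz step extracting $\rho>3/2$ are all right, as is your observation that the naive reduction through Lemma~\ref{Kato-Ponce commutator estimate} or Lemma~\ref{Moser estimate} fails because $v$ need not lie in $L^{\infty}$ when $\eta$ is small. The one place where your outline is thin is the high--low piece $T_v f$: there the low-frequency factor is $S_{k-1}v$, and since $\eta$ may be below $1/2$ you must use Bernstein to pass $\|S_{k-1}v\|_{L^{\infty}}\lesssim 2^{k(1/2-\eta)_+}\|v\|_{H^{\eta}}$ and then absorb the resulting growth into $\|\Delta_k f\|_{L^2}$ via $\rho>3/2$ and $\rho\geq\eta+1$; the almost-orthogonality in $k$ then closes the $\ell^2$ sum. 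This is routine but not literally ``bookkeeping,'' and it is worth writing out if you intend this as a self-contained proof rather than a pointer to Taylor's argument.
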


\begin{Lemma}\label{F  lemma}
	For the $F(\cdot)$ defined in \eqref{F decomposition} with $F_3(u)$ disappearing for $k=1$, we have that for all $k\ge1$,
	\begin{align*}
	\|F(v)\|_{H^s}&
	\lesssim \|v\|_{W^{1,\infty}}^k\|v\|_{H^s},
	\ \ s>3/2,\\
	\|F(u)-F(v)\|_{H^s}&\lesssim\left(\|u\|_{H^s}+\|v\|_{H^s}\right)^k\|u-v\|_{H^s},
	\ \ s>3/2,\\
	\|F(u)-F(v)\|_{H^s}&\lesssim\left(\|u\|_{H^{s+1}}+\|v\|_{H^{s+1}}\right)^k\|u-v\|_{H^s},
	\ \  1/2<s\leq3/2
	\end{align*}
\end{Lemma}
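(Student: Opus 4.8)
The plan is to peel off the Helmholtz-type multipliers first and reduce all three inequalities to product (or difference-of-products) estimates for the arguments of $F_1,F_2,F_3$. Reading off the Fourier symbols ${\rm i}k/(1+k^2)$ and $1/(1+k^2)$, the operator $(1-\partial_{xx}^2)^{-1}\partial_x$ is bounded from $H^{\sigma}$ to $H^{\sigma+1}$ and $(1-\partial_{xx}^2)^{-1}$ is bounded from $H^{\sigma}$ to $H^{\sigma+2}$. Hence, recalling \eqref{F decomposition},
\[
\|F_1(v)\|_{H^s}\lesssim\|v^{k+1}\|_{H^{s-1}},\quad \|F_2(v)\|_{H^s}\lesssim\|v^{k-1}v_x^2\|_{H^{s-1}},\quad \|F_3(v)\|_{H^s}\lesssim\|v^{k-2}v_x^3\|_{H^{s-2}},
\]
and likewise for the differences $F_i(u)-F_i(v)$. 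Everything then rests on multiplicative estimates in $H^{s-1}$ and $H^{s-2}$.

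For the first estimate ($s>3/2$) one has $s-1>1/2$, so $H^{s-1}\hookrightarrow L^\infty$ and $H^s\hookrightarrow W^{1,\infty}$. I would bound each argument by iterating the Moser estimate (Lemma \ref{Moser estimate}), always placing one top-order factor in $H^{s-1}$ (using $\|v_x\|_{H^{s-1}}\le\|v\|_{H^s}$) and the remaining $k$ factors in $L^\infty$; for $F_3$ one first notes $\|\cdot\|_{H^{s-2}}\le\|\cdot\|_{H^{s-1}}$ so that Lemma \ref{Moser estimate} applies. Each of $\|v^{k+1}\|_{H^{s-1}}$, $\|v^{k-1}v_x^2\|_{H^{s-1}}$, $\|v^{k-2}v_x^3\|_{H^{s-1}}$ is thereby dominated by $\|v\|_{W^{1,\infty}}^k\|v\|_{H^s}$, which is the first line.

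For the second estimate ($s>3/2$) the structural fact I lean on is that $H^{s-1}$ is a Banach algebra (a consequence of Lemma \ref{Moser estimate} together with $H^{s-1}\hookrightarrow L^\infty$). I factor the differences, for instance $u^{k+1}-v^{k+1}=(u-v)\sum_{i=0}^{k}u^{k-i}v^i$, $u_x^2-v_x^2=(u-v)_x(u_x+v_x)$, and $u^{k-1}u_x^2-v^{k-1}v_x^2=(u^{k-1}-v^{k-1})u_x^2+v^{k-1}(u_x-v_x)(u_x+v_x)$, with an analogous splitting of the cubic term. Multiplying out in the algebra $H^{s-1}$ and using $\|u-v\|_{H^{s-1}}\le\|u-v\|_{H^s}$, $\|(u-v)_x\|_{H^{s-1}}\le\|u-v\|_{H^s}$ and $\|u_x\|_{H^{s-1}}\le\|u\|_{H^s}$ produces the factor $(\|u\|_{H^s}+\|v\|_{H^s})^k\|u-v\|_{H^s}$ term by term.

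The third estimate ($1/2<s\le 3/2$) is the delicate case and the main obstacle, since now $s-1\le 1/2$, so $H^{s-1}$ fails to be an algebra and Lemma \ref{Moser estimate} is unavailable once $s\le 1$. I would use the same factorizations but separate, in each difference, the part where $u-v$ enters \emph{without} a derivative from the part where it enters through $(u-v)_x$. In the first part $u-v$ sits in $H^s$, so I simply estimate $\|\cdot\|_{H^{s-1}}\le\|\cdot\|_{H^s}$ and multiply out in the algebra $H^s$ ($s>1/2$), the smooth factors costing at most $(\|u\|_{H^{s+1}}+\|v\|_{H^{s+1}})^k$. In the second part I keep $(u-v)_x\in H^{s-1}$ (contributing $\|u-v\|_{H^s}$) and absorb the remaining factors into $H^s$ via the standard low–high fractional product estimate $\|fg\|_{H^{s-1}}\lesssim\|f\|_{H^{s-1}}\|g\|_{H^s}$, valid for $s>1/2$, which extends Lemma \ref{Moser estimate} to the negative exponent $s-1$. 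It is precisely this one-derivative loss on the smooth factors, e.g. $\|v^{k-1}(u_x+v_x)\|_{H^s}\lesssim(\|u\|_{H^{s+1}}+\|v\|_{H^{s+1}})^k$, that forces the higher norm $H^{s+1}$ on the right-hand side and is the heart of the argument.
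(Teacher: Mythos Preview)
Your proposal is correct and follows exactly the approach the paper has in mind: strip off the smoothing operators $(1-\partial_{xx}^2)^{-1}\partial_x$ and $(1-\partial_{xx}^2)^{-1}$, then close with Moser-type product estimates. The paper itself omits all details, saying only that ``the desired result follows from Lemma~\ref{Moser estimate} immediately''; your careful handling of the range $1/2<s\le 3/2$, where $s-1\le 1/2$ forces the low--high product rule $\|fg\|_{H^{s-1}}\lesssim\|f\|_{H^{s-1}}\|g\|_{H^s}$ beyond the stated Moser lemma, is precisely the detail the paper leaves to the reader (and to \cite{Tang-Zhao-Liu-2014-AA}).
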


\begin{proof}
	Since $s>3/2$, the desired result follows from Lemma \ref{Moser estimate} immediately. We omit the details and we refer to \cite{Tang-Zhao-Liu-2014-AA} for the CH case.
\end{proof}

\begin{Lemma}[\cite{Himonas-Kenig-Misiolek-2010-CPDE,Tang-Zhao-Liu-2014-AA}]\label{cos sin approximate estimate}
	Let $\sigma, \alpha\in\R$. If $\lambda\in\Z^+$ and $\lambda\gg 1$, then
	\begin{align*}
	\|\sin(\lambda x-\alpha)\|_{H^\sigma}
	=\|\cos(\lambda x-\alpha)\|_{H^\sigma}&\approx \lambda^{\sigma}.
	\end{align*}
\end{Lemma}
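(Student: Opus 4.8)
The plan is to reduce the statement to the spectral definition of the Sobolev norm recorded in Section~\ref{Preliminaries and Main Results}, $\|f\|_{H^\sigma}^2=\sum_{k\in\Z}(1+k^2)^\sigma|\widehat f(k)|^2$, and to exploit the fact that, because $\lambda\in\Z^+$, the functions $\cos(\lambda x-\alpha)$ and $\sin(\lambda x-\alpha)$ are single-frequency modes on $\T$. First I would write the complex-exponential expansions
\[
\cos(\lambda x-\alpha)=\tfrac12 e^{-i\alpha}e^{i\lambda x}+\tfrac12 e^{i\alpha}e^{-i\lambda x},\qquad
\sin(\lambda x-\alpha)=\tfrac{1}{2i} e^{-i\alpha}e^{i\lambda x}-\tfrac{1}{2i} e^{i\alpha}e^{-i\lambda x},
\]
which show that both functions have Fourier coefficients supported exactly at $k=\pm\lambda$ and that $|\widehat f(\lambda)|=|\widehat f(-\lambda)|=\tfrac12$ in either case; the phase $\alpha$ merely rotates the coefficients and leaves their modulus unchanged.

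Next I would substitute these coefficients into the norm. Only the two terms $k=\pm\lambda$ survive, so for $f\in\{\cos(\lambda x-\alpha),\sin(\lambda x-\alpha)\}$ one obtains
\[
\|f\|_{H^\sigma}^2=(1+\lambda^2)^\sigma\bigl(|\widehat f(\lambda)|^2+|\widehat f(-\lambda)|^2\bigr)=\tfrac12(1+\lambda^2)^\sigma .
\]
Since the right-hand side is the same for the sine and the cosine, this already gives the equality $\|\sin(\lambda x-\alpha)\|_{H^\sigma}=\|\cos(\lambda x-\alpha)\|_{H^\sigma}$. It then remains to compare $(1+\lambda^2)^{\sigma/2}$ with $\lambda^\sigma$: writing $(1+\lambda^2)^{\sigma/2}=\lambda^\sigma(1+\lambda^{-2})^{\sigma/2}$, the factor $(1+\lambda^{-2})^{\sigma/2}$ stays between $\min\{1,2^{\sigma/2}\}$ and $\max\{1,2^{\sigma/2}\}$ for all $\lambda\ge1$ and tends to $1$ as $\lambda\to\infty$, so it is bounded above and below by positive constants depending only on $\sigma$. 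Together with the harmless constant $\tfrac12$, this yields $\|f\|_{H^\sigma}\approx\lambda^\sigma$ for $\lambda\gg1$.

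As for the main difficulty, there is essentially none of analytic substance here — the argument is a direct spectral computation rather than an estimate. The only points needing care are bookkeeping: one must use the same normalization of the Fourier transform on $\T$ as in the definition of $\|\cdot\|_{H^\sigma}$ (so that indeed $|\widehat f(\pm\lambda)|=\tfrac12$), and one must state the equivalence $(1+\lambda^2)^{\sigma/2}\approx\lambda^\sigma$ with constants that are independent of $\lambda$ while allowed to depend on $\sigma$, and valid for both signs of $\sigma$. Because the conclusion is asserted only up to the equivalence $\approx$, the precise normalization constant (here $1/\sqrt2$) is immaterial, which is why the statement can be quoted cleanly from \cite{Himonas-Kenig-Misiolek-2010-CPDE,Tang-Zhao-Liu-2014-AA}.
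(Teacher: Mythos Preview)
Your argument is correct and is exactly the standard spectral computation behind this lemma; the paper does not give its own proof but simply cites \cite{Himonas-Kenig-Misiolek-2010-CPDE,Tang-Zhao-Liu-2014-AA}, where the same Fourier-side calculation appears. There is nothing to add.
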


\begin{Lemma}\label{eta Lemma}
	Let Assumption \ref{Assumption-3} hold true and let $a(t)\in C([0,\infty))$ be a bounded function. For
	\begin{equation*}
	X={\rm e}^{\int_0^tb(t') {\rm d} W_{t'}+\int_0^ta(t')-\frac{b^2(t')}{2} {\rm d}t'},
	\end{equation*}
	we have the following properties:
	
	\begin{enumerate}[label={ (\roman*)}]
		
		\item\label{eta->0}	Let $\phi(t):=\int_0^tb^2(t') {\rm d}t'$ and $\phi^{-1}(t)$ be the inverse function of $\phi$.
		If 	\begin{equation}
		\limsup_{t \to \infty} \frac{1}{\sqrt{2 t \log \log t}} \biggl(\int_0^{\phi^{-1}(t)} a(t'){\rm d}t' 
		- \frac{t}{2}\biggr) < -1 ,\label{theta beta condition I}
		\end{equation}
		then
		\begin{equation}\label{eta tends 0}
		\lim_{t \to \infty}X(t)=0 \ \ \p-a.s.
		\end{equation}
		If
		\begin{equation}
		\liminf_{t \to \infty} \frac{1}{\sqrt{2 t \log \log t}} \biggl(\int_0^{\phi^{-1}(t)} a(t')  {\rm d}t' - \frac{t}{2}\biggr)>1 ,\label{theta beta condition II}
		\end{equation}
		then
		\begin{equation}\label{eta tends infty}
		\lim_{t \to \infty}X(t)=+\infty \ \ \p-a.s.
		\end{equation}

		\item\label{exit time eta} Let  $a(t)=\lambda b^2(t)$ with $\lambda<\frac12$ and $\tau_{R}=\inf \{t \geq 0 : X(t)>R\}$ with $R>1$, then
		\begin{equation}\label{Estimate on exist time}
		\mathbb{P}\left(\tau_{R}=\infty\right) \geq 1-\left(\frac{1}{R}\right)^{1-2\lambda}.
		\end{equation}
	\end{enumerate}
\end{Lemma}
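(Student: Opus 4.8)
The plan is to reduce both statements to classical facts about continuous local martingales by writing $\log X$ explicitly. Set $M(t)=\int_0^t b(t')\,{\rm d}W_{t'}$; since $b\in C([0,\infty))$ is bounded by Assumption \ref{Assumption-3}, $M$ is a continuous local martingale with quadratic variation $\langle M\rangle_t=\phi(t)=\int_0^tb^2(t')\,{\rm d}t'$, and
\begin{equation*}
\log X(t)=M(t)+\int_0^t a(t')\,{\rm d}t'-\frac12\phi(t).
\end{equation*}
Assumption \ref{Assumption-3} gives $b^2\geq b_*>0$, so $\phi$ is strictly increasing with $\phi(t)\geq b_* t\to\infty$; hence $\phi$ admits a continuous strictly increasing inverse $\phi^{-1}$ and $\phi(t)\to\infty$ as $t\to\infty$, so that $\tau=\phi(t)\to\infty$ exhausts $t\to\infty$.

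For \ref{eta->0} I would apply the Dambis--Dubins--Schwarz theorem to represent $M(t)=B(\phi(t))$ for a standard one-dimensional Brownian motion $B$. Substituting $t=\phi^{-1}(\tau)$ yields
\begin{equation*}
\log X(\phi^{-1}(\tau))=B(\tau)+\left(\int_0^{\phi^{-1}(\tau)}a(t')\,{\rm d}t'-\frac{\tau}{2}\right).
\end{equation*}
Dividing by $\sqrt{2\tau\log\log\tau}$ and invoking the law of the iterated logarithm, $\limsup_{\tau\to\infty}B(\tau)/\sqrt{2\tau\log\log\tau}=1$ and $\liminf_{\tau\to\infty}B(\tau)/\sqrt{2\tau\log\log\tau}=-1$ $\p$-a.s. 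Under \eqref{theta beta condition I}, subadditivity of $\limsup$ bounds $\limsup_{\tau\to\infty}\log X(\phi^{-1}(\tau))/\sqrt{2\tau\log\log\tau}$ by the sum of $1$ and a quantity strictly below $-1$, which is negative; hence $\log X(\phi^{-1}(\tau))\to-\infty$ and \eqref{eta tends 0} follows. Under \eqref{theta beta condition II}, the corresponding superadditivity of $\liminf$ bounds the limit inferior below by $-1$ plus a quantity strictly above $1$, forcing $\log X(\phi^{-1}(\tau))\to+\infty$ and giving \eqref{eta tends infty}.

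For \ref{exit time eta} with $a(t)=\lambda b^2(t)$ and $\lambda<\tfrac12$, we have $\log X(t)=M(t)-(\tfrac12-\lambda)\phi(t)$. The key algebraic observation is that the exponent $p:=1-2\lambda>0$ makes $Y:=X^p$ a local martingale: applying It\^o's formula, the drift coefficient of ${\rm d}Y$ equals $\big(\tfrac12 p^2-p(\tfrac12-\lambda)\big)b^2(t)\,Y=0$. Thus $Y$ is a nonnegative continuous local martingale, hence a supermartingale, with $Y(0)=1$. Because $x\mapsto x^p$ is increasing and continuous, $\{\tau_{R}<\infty\}=\{\sup_{t\geq0}X(t)>R\}=\{\sup_{t\geq0}Y(t)>R^{p}\}$, and the maximal inequality for nonnegative supermartingales gives $\p\{\sup_{t\geq0}Y(t)\geq R^{p}\}\leq \E[Y(0)]/R^{p}=(1/R)^{1-2\lambda}$. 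Passing to complements yields \eqref{Estimate on exist time}.

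The main obstacle is the bookkeeping in part \ref{eta->0}: one must verify $\phi(t)\to\infty$ together with its continuous invertibility (both supplied by Assumption \ref{Assumption-3}), and then transfer the asymptotics of $B$ through the time change while carefully controlling the $\limsup$ and $\liminf$ of the sum of the Brownian term and the deterministic drift. Part \ref{exit time eta} hinges on the exact cancellation that makes $X^{1-2\lambda}$ a local, hence super-, martingale, after which the nonnegative-supermartingale maximal inequality --- rather than Doob's submartingale inequality --- is precisely the right tool.
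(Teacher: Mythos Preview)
Your proof is correct and follows essentially the same route as the paper. For part \ref{eta->0} both arguments time-change $\int_0^t b\,{\rm d}W$ to a standard Brownian motion (the paper cites the time-change directly, you invoke Dambis--Dubins--Schwarz) and then appeal to the law of the iterated logarithm; for part \ref{exit time eta} both identify $p=1-2\lambda$ as the exponent that kills the drift of $X^p$. The only noticeable difference is in the final step of \ref{exit time eta}: the paper uses optional stopping to get $\mathbb{E}\,X^{1-2\lambda}(N\wedge\tau_R)=1$ and then Chebyshev plus a limit in $N$, whereas you recognize $X^{1-2\lambda}$ as a nonnegative continuous local martingale, hence a supermartingale, and apply the supermartingale maximal inequality directly---a slightly cleaner packaging of the same estimate.
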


\begin{proof}
	Let $Q(t)=\int_0^tb(t') {\rm d} W_{t'}+\int_0^ta(t')-\frac{b^2(t')}{2} {\rm d}t'$. Since  $\phi:[0,\infty)\ni t\mapsto\phi(t)\in[0,\infty)$ is one-to-one (since $\phi$ is strictly increasing) and onto (by Assumption \ref{Assumption-3}), $\phi^{-1}(t)$ is well defined and $\lim_{t\to\infty}Q(t)=\lim_{t\to\infty}Q(\phi^{-1}(t))$ $\p-a.s.$ 
	Direct computation (see \cite[Exercise 7.7  and Theorem 8.2]{Baldi-2017-book} for example) shows that $$B_t:=\int_0^{\phi^{-1}(t)}b(t'){\rm d} W_{t'}$$ is itself a Brownian motion. Then we use the  law of the iterated logarithm to get  $$\limsup_{t\to\infty}\frac{B_t}{\sqrt{2 t \log \log t}}=1,\ \  \liminf_{t\to\infty}\frac{B_t}{\sqrt{2 t \log \log t}}=-1 \ \ \p-a.s.$$
	Therefore \eqref{theta beta condition I} implies
	\begin{align*}
	\lim_{t\to\infty}Q(\phi^{-1}(t))\leq
	&\limsup_{t\rightarrow\infty}\sqrt{2 t \log \log t}\left(\frac{B_t}{\sqrt{2 t \log \log t}}+\frac{\int_0^{\phi^{-1}(t)}a(t') {\rm d}t'-\frac{t}{2}}{\sqrt{2 t \log \log t}}\right)
	=-\infty\ \ \p-a.s.,
	\end{align*}
	which leads to
	\begin{equation*}
	\lim_{t \rightarrow \infty}X(t)={\rm e}^{\lim_{t\to\infty}Q(\phi^{-1}(t))}=0 \ \ \p-a.s.
	\end{equation*}
	Hence we obtain 	\eqref{eta tends 0}. If
	\eqref{theta beta condition II} holds true, then
	\begin{align*}
	\lim_{t\to\infty}Q(\phi^{-1}(t))\geq
	&\liminf_{t\rightarrow\infty}\sqrt{2 t \log \log t}\left(\frac{B_t}{\sqrt{2 t \log \log t}}+\frac{\int_0^{\phi^{-1}(t)}a(t') {\rm d}t'-\frac{t}{2}}{\sqrt{2 t \log \log t}}\right)
	=+\infty\ \ \p-a.s.,
	\end{align*}
	which gives \eqref{eta tends infty}. Now we consider \eqref{Estimate on exist time} and exploit  \cite{GlattHoltz-Vicol-2014-AP}. It is easy to see that when 
	$a(t)=\lambda b^2(t)$, $X$
	is the unique global solution to the problem
	$$	 {\rm d}X=\lambda b^2(t)X {\rm d}t+b(t)X {\rm d}W,\ \ X(0)=1.$$
	We apply the It\^{o} formula to \(X^{p}\)
	with $p>0$ to obtain that
	$${\rm d }X^{p}=\left( p\lambda b^2(t)+\frac{b^{2}(t) p(p-1)}{2}\right) X^{p}  {\rm d} t+p b(t) X^{p} {\rm d }W.$$
	Particularly, if $p=1-2\lambda$, then
	$${\rm d }X^{1-2\lambda}=\left(1-2\lambda\right)b(t) X^{1-2\lambda} {\rm d }W, $$
	which means  $\mathbb{E} X^{1-2\lambda}\left(t \wedge \tau_{R}\right)=1.$
	Hence by the definition of $\tau_R$, continuity of measures and the Chebyshev inequality, we have
	\begin{align*}
	\mathbb{P}\left\{\tau_{R}=\infty\right\}
	=&\lim _{N \rightarrow \infty} \mathbb{P}\left(\tau_{R}>N\right)\\
	=&\lim _{N \rightarrow \infty} \mathbb{P}\left(X^{1-2\lambda}\left(N \wedge \tau_{R}\right)<R^{1-2\lambda}\right) \\
	\geq& \lim _{N \rightarrow \infty}\left(1-\frac{\mathbb{E} X^{1-2\lambda}\left(N \wedge \tau_{R}\right)}{R^{1-2\lambda}}\right)=1-\frac{1}{R^{1-2\lambda}},
	\end{align*}
	which is \eqref{Estimate on exist time}.
\end{proof}

%
%

\begin{thebibliography}{10}
	
	\bibitem{Baldi-2017-book}
	P.~Baldi.
	\newblock {\em Stochastic calculus}.
	\newblock Universitext. Springer, Cham, 2017.
	
	\bibitem{Bona-Tzvetkov-2009-DCDS}
	J.~L. Bona and N.~Tzvetkov.
	\newblock Sharp well-posedness results for the {BBM} equation.
	\newblock {\em Discrete Contin. Dyn. Syst.}, 23(4):1241--1252, 2009.
	
	\bibitem{Breit-Feireisl-Hofmanova-2018-CPDE}
	D.~Breit, E.~Feireisl, and M.~Hofmanov\'{a}.
	\newblock Local strong solutions to the stochastic compressible
	{N}avier-{S}tokes system.
	\newblock {\em Comm. Partial Differential Equations}, 43(2):313--345, 2018.
	
	\bibitem{Breit-Feireisl-Hofmanova-2018-Book}
	D.~Breit, E.~Feireisl, and M.~Hofmanov\'{a}.
	\newblock {\em Stochastically forced compressible fluid flows}, volume~3 of
	{\em De Gruyter Series in Applied and Numerical Mathematics}.
	\newblock De Gruyter, Berlin, 2018.
	
	\bibitem{Breit-Hofmanova-2016-IUMJ}
	D.~Breit and M.~Hofmanov\'{a}.
	\newblock Stochastic {N}avier-{S}tokes equations for compressible fluids.
	\newblock {\em Indiana Univ. Math. J.}, 65(4):1183--1250, 2016.
	
	\bibitem{Bressan-Constantin-2007-ARMA}
	A.~Bressan and A.~Constantin.
	\newblock Global conservative solutions of the {C}amassa-{H}olm equation.
	\newblock {\em Arch. Ration. Mech. Anal.}, 183(2):215--239, 2007.
	
	\bibitem{Bressan-Constantin-2007-AA}
	A.~Bressan and A.~Constantin.
	\newblock Global dissipative solutions of the {C}amassa-{H}olm equation.
	\newblock {\em Anal. Appl. (Singap.)}, 5(1):1--27, 2007.
	
	\bibitem{Camassa-Holm-1993-PRL}
	R.~Camassa and D.~D. Holm.
	\newblock An integrable shallow water equation with peaked solitons.
	\newblock {\em Phys. Rev. Lett.}, 71(11):1661--1664, 1993.
	
	\bibitem{Chen-Li-Yan-2015-DCDS}
	D.~Chen, Y.~Li, and W.~Yan.
	\newblock On the {C}auchy problem for a generalized {C}amassa-{H}olm equation.
	\newblock {\em Discrete Contin. Dyn. Syst.}, 35(3):871--889, 2015.
	
	\bibitem{Chen-Gao-2016-PA}
	Y.~Chen and H.~Gao.
	\newblock Well-posedness and large deviations of the stochastic modified
	{C}amassa-{H}olm equation.
	\newblock {\em Potential Anal.}, 45(2):331--354, 2016.
	
	\bibitem{Chen-Gao-Guo-2012-JDE}
	Y.~Chen, H.~Gao, and B.~Guo.
	\newblock Well-posedness for stochastic {C}amassa-{H}olm equation.
	\newblock {\em J. Differential Equations}, 253(8):2353--2379, 2012.
	
	\bibitem{Constantin-2000-JNS}
	A.~Constantin.
	\newblock On the blow-up of solutions of a periodic shallow water equation.
	\newblock {\em J. Nonlinear Sci.}, 10(3):391--399, 2000.
	
	\bibitem{Constantin-2006-Inventiones}
	A.~Constantin.
	\newblock The trajectories of particles in {S}tokes waves.
	\newblock {\em Invent. Math.}, 166(3):523--535, 2006.
	
	\bibitem{Constantin-Escher-1998-Acta}
	A.~Constantin and J.~Escher.
	\newblock Wave breaking for nonlinear nonlocal shallow water equations.
	\newblock {\em Acta Math.}, 181(2):229--243, 1998.
	
	\bibitem{Constantin-Escher-1998-CPAM}
	A.~Constantin and J.~Escher.
	\newblock Well-posedness, global existence, and blowup phenomena for a periodic
	quasi-linear hyperbolic equation.
	\newblock {\em Comm. Pure Appl. Math.}, 51(5):475--504, 1998.
	
	\bibitem{Constantin-Escher-2007-BAMS}
	A.~Constantin and J.~Escher.
	\newblock Particle trajectories in solitary water waves.
	\newblock {\em Bull. Amer. Math. Soc. (N.S.)}, 44(3):423--431, 2007.
	
	\bibitem{Constantin-Escher-2011-Annals}
	A.~Constantin and J.~Escher.
	\newblock Analyticity of periodic traveling free surface water waves with
	vorticity.
	\newblock {\em Ann. of Math. (2)}, 173(1):559--568, 2011.
	
	\bibitem{Crisan-Flandoli-Holm-2018-JNS}
	D.~Crisan, F.~Flandoli, and D.~D. Holm.
	\newblock Solution {P}roperties of a 3{D} {S}tochastic {E}uler {F}luid
	{E}quation.
	\newblock {\em J. Nonlinear Sci.}, 29(3):813--870, 2019.
	
	\bibitem{Prato-Zabczyk-2014-Cambridge}
	G.~Da~Prato and J.~Zabczyk.
	\newblock {\em Stochastic equations in infinite dimensions}, volume 152 of {\em
		Encyclopedia of Mathematics and its Applications}.
	\newblock Cambridge University Press, Cambridge, second edition, 2014.
	
	\bibitem{Debussche-Glatt-Temam-2011-PhyD}
	A.~Debussche, N.~E. Glatt-Holtz, and R.~Temam.
	\newblock Local martingale and pathwise solutions for an abstract fluids model.
	\newblock {\em Phys. D}, 240(14-15):1123--1144, 2011.
	
	\bibitem{Fedrizzi-Neves-Olivera-2018-ANSP}
	E.~Fedrizzi, W.~Neves, and C.~Olivera.
	\newblock On a class of stochastic transport equations for {$L^2_{\rm loc}$}
	vector fields.
	\newblock {\em Ann. Sc. Norm. Super. Pisa Cl. Sci. (5)}, 18(2):397--419, 2018.
	
	\bibitem{Flandoli-2008-SPDE-book}
	F.~Flandoli.
	\newblock An introduction to 3{D} stochastic fluid dynamics.
	\newblock 1942:51--150, 2008.
	
	\bibitem{Flandoli-Gubinelli-Priola-2010-Invention}
	F.~Flandoli, M.~Gubinelli, and E.~Priola.
	\newblock Well-posedness of the transport equation by stochastic perturbation.
	\newblock {\em Invent. Math.}, 180(1):1--53, 2010.
	
	\bibitem{Fuchssteiner-Fokas-1981-PhyD}
	B.~Fuchssteiner and A.~S. Fokas.
	\newblock Symplectic structures, their {B}\"{a}cklund transformations and
	hereditary symmetries.
	\newblock {\em Phys. D}, 4(1):47--66, 1981/82.
	
	\bibitem{Gawarecki-Mandrekar-2010-Springer}
	L.~Gawarecki and V.~Mandrekar.
	\newblock {\em Stochastic differential equations in infinite dimensions with
		applications to stochastic partial differential equations}.
	\newblock Probability and its Applications (New York). Springer, Heidelberg,
	2011.
	
	\bibitem{Geng-Xue-2009-Nonlinearity}
	X.~Geng and B.~Xue.
	\newblock An extension of integrable peakon equations with cubic nonlinearity.
	\newblock {\em Nonlinearity}, 22(8):1847--1856, 2009.
	
	\bibitem{Gess-Souganidis-2017-CPAM}
	B.~Gess and P.~E. Souganidis.
	\newblock Long-time behavior, invariant measures, and regularizing effects for
	stochastic scalar conservation laws.
	\newblock {\em Comm. Pure Appl. Math.}, 70(8):1562--1597, 2017.
	
	\bibitem{GlattHoltz-Ziane-2009-ADE}
	N.~Glatt-Holtz and M.~Ziane.
	\newblock Strong pathwise solutions of the stochastic {N}avier-{S}tokes system.
	\newblock {\em Adv. Differential Equations}, 14(5-6):567--600, 2009.
	
	\bibitem{GlattHoltz-Vicol-2014-AP}
	N.~E. Glatt-Holtz and V.~C. Vicol.
	\newblock Local and global existence of smooth solutions for the stochastic
	{E}uler equations with multiplicative noise.
	\newblock {\em Ann. Probab.}, 42(1):80--145, 2014.
	
	\bibitem{Gyongy-Krylov-1996-PTRF}
	I.~Gy\"{o}ngy and N.~Krylov.
	\newblock Existence of strong solutions for {I}t\^{o}'s stochastic equations
	via approximations.
	\newblock {\em Probab. Theory Related Fields}, 105(2):143--158, 1996.
	
	\bibitem{Henry-1981-book}
	D.~Henry.
	\newblock Geometric theory of semilinear parabolic equations.
	\newblock 840:iv+348, 1981.
	
	\bibitem{Himonas-Holliman-2014-ADE}
	A.~A. Himonas and C.~Holliman.
	\newblock The {C}auchy problem for a generalized {C}amassa-{H}olm equation.
	\newblock {\em Adv. Differential Equations}, 19(1-2):161--200, 2014.
	
	\bibitem{Himonas-Kenig-2009-DIE}
	A.~A. Himonas and C.~Kenig.
	\newblock Non-uniform dependence on initial data for the {CH} equation on the
	line.
	\newblock {\em Differential Integral Equations}, 22(3-4):201--224, 2009.
	
	\bibitem{Himonas-Kenig-Misiolek-2010-CPDE}
	A.~A. Himonas, C.~Kenig, and G.~Misio{\l}ek.
	\newblock Non-uniform dependence for the periodic {CH} equation.
	\newblock {\em Comm. Partial Differential Equations}, 35(6):1145--1162, 2010.
	
	\bibitem{Himonas-Misiolek-2010-CMP}
	A.~A. Himonas and G.~Misio{\l}ek.
	\newblock Non-uniform dependence on initial data of solutions to the {E}uler
	equations of hydrodynamics.
	\newblock {\em Comm. Math. Phys.}, 296(1):285--301, 2010.
	
	\bibitem{Hofmanova-2013-SPTA}
	M.~Hofmanov\'{a}.
	\newblock Degenerate parabolic stochastic partial differential equations.
	\newblock {\em Stochastic Process. Appl.}, 123(12):4294--4336, 2013.
	
	\bibitem{Holden-Raynaud-2007-CPDE}
	H.~Holden and X.~Raynaud.
	\newblock Global conservative solutions of the {C}amassa-{H}olm equation---a
	{L}agrangian point of view.
	\newblock {\em Comm. Partial Differential Equations}, 32(10-12):1511--1549,
	2007.
	
	\bibitem{Holden-Raynaud-2009-DCDS}
	H.~Holden and X.~Raynaud.
	\newblock Dissipative solutions for the {C}amassa-{H}olm equation.
	\newblock {\em Discrete Contin. Dyn. Syst.}, 24(4):1047--1112, 2009.
	
	\bibitem{Hone-etal-2009-DPDE}
	A.~N.~W. Hone, H.~Lundmark, and J.~Szmigielski.
	\newblock Explicit multipeakon solutions of {N}ovikov's cubically nonlinear
	integrable {C}amassa-{H}olm type equation.
	\newblock {\em Dyn. Partial Differ. Equ.}, 6(3):253--289, 2009.
	
	\bibitem{Kallianpur-Xiong-1995-book}
	G.~Kallianpur and J.~Xiong.
	\newblock Stochastic differential equations in infinite-dimensional spaces.
	\newblock 26:vi+342, 1995.
	\newblock Expanded version of the lectures delivered as part of the 1993
	Barrett Lectures at the University of Tennessee, Knoxville, TN, March 25--27,
	1993, With a foreword by Balram S. Rajput and Jan Rosinski.
	
	\bibitem{Karczewska-1998-AUMCSS}
	A.~Karczewska.
	\newblock Stochastic integral with respect to cylindrical {W}iener process.
	\newblock {\em Ann. Univ. Mariae Curie-Sk\l odowska Sect. A}, 52(2):79--93,
	1998.
	
	\bibitem{Kato-1975-ARMA}
	T.~Kato.
	\newblock The {C}auchy problem for quasi-linear symmetric hyperbolic systems.
	\newblock {\em Arch. Rational Mech. Anal.}, 58(3):181--205, 1975.
	
	\bibitem{Kato-Ponce-1988-CPAM}
	T.~Kato and G.~Ponce.
	\newblock Commutator estimates and the {E}uler and {N}avier-{S}tokes equations.
	\newblock {\em Comm. Pure Appl. Math.}, 41(7):891--907, 1988.
	
	\bibitem{Kenig-Ponce-Vega-1993-CPAM}
	C.~E. Kenig, G.~Ponce, and L.~Vega.
	\newblock Well-posedness and scattering results for the generalized
	{K}orteweg-de {V}ries equation via the contraction principle.
	\newblock {\em Comm. Pure Appl. Math.}, 46(4):527--620, 1993.
	
	\bibitem{Kenig-Ponce-Vega-2001-Duke}
	C.~E. Kenig, G.~Ponce, and L.~Vega.
	\newblock On the ill-posedness of some canonical dispersive equations.
	\newblock {\em Duke Math. J.}, 106(3):617--633, 2001.
	
	\bibitem{Kim-2010-JFA}
	J.~U. Kim.
	\newblock On the {C}auchy problem for the transport equation with random noise.
	\newblock {\em J. Funct. Anal.}, 259(12):3328--3359, 2010.
	
	\bibitem{Koch-Tzvetkov-2005-IMRN}
	H.~Koch and N.~Tzvetkov.
	\newblock Nonlinear wave interactions for the {B}enjamin-{O}no equation.
	\newblock {\em Int. Math. Res. Not.}, (30):1833--1847, 2005.
	
	\bibitem{Kroker-Rohde-2012-ANM}
	I.~Kr\"{o}ker and C.~Rohde.
	\newblock Finite volume schemes for hyperbolic balance laws with multiplicative
	noise.
	\newblock {\em Appl. Numer. Math.}, 62(4):441--456, 2012.
	
	\bibitem{Lenells-Wunsch-2013-JDE}
	J.~Lenells and M.~Wunsch.
	\newblock On the weakly dissipative {C}amassa-{H}olm, {D}egasperis-{P}rocesi,
	and {N}ovikov equations.
	\newblock {\em J. Differential Equations}, 255(3):441--448, 2013.
	
	\bibitem{Lions-Perthame-Souganidis-2013-SPDE}
	P.-L. Lions, B.~Perthame, and P.~E. Souganidis.
	\newblock Scalar conservation laws with rough (stochastic) fluxes.
	\newblock {\em Stoch. Partial Differ. Equ. Anal. Comput.}, 1(4):664--686, 2013.
	
	\bibitem{Lions-Perthame-Souganidis-2014-SPDE}
	P.-L. Lions, B.~Perthame, and P.~E. Souganidis.
	\newblock Scalar conservation laws with rough (stochastic) fluxes: the
	spatially dependent case.
	\newblock {\em Stoch. Partial Differ. Equ. Anal. Comput.}, 2(4):517--538, 2014.
	
	\bibitem{Mckean-1998-AJM}
	H.~P. McKean.
	\newblock Breakdown of a shallow water equation.
	\newblock {\em Asian J. Math.}, 2(4):867--874, 1998.
	\newblock Mikio Sato: a great Japanese mathematician of the twentieth century.
	
	\bibitem{Mollinedo-Olivera-2017-BBMS}
	D.~A.~C. Mollinedo and C.~Olivera.
	\newblock Well-posedness of the stochastic transport equation with unbounded
	drift.
	\newblock {\em Bull. Braz. Math. Soc. (N.S.)}, 48(4):663--677, 2017.
	
	\bibitem{Neves-Olivera-2015-NODEA}
	W.~Neves and C.~Olivera.
	\newblock Wellposedness for stochastic continuity equations with
	{L}adyzhenskaya-{P}rodi-{S}errin condition.
	\newblock {\em NoDEA Nonlinear Differential Equations Appl.}, 22(5):1247--1258,
	2015.
	
	\bibitem{Novikov-2009-JPA}
	V.~Novikov.
	\newblock Generalizations of the {C}amassa-{H}olm equation.
	\newblock {\em J. Phys. A}, 42(34):342002, 14, 2009.
	
	\bibitem{Prevot-Rockner-2007-book}
	C.~Pr\'{e}v\^{o}t and M.~R\"{o}ckner.
	\newblock {\em A concise course on stochastic partial differential equations},
	volume 1905 of {\em Lecture Notes in Mathematics}.
	\newblock Springer, Berlin, 2007.
	
	\bibitem{Rockner-Zhu-Zhu-2014-SPTA}
	M.~R\"{o}ckner, R.~Zhu, and X.~Zhu.
	\newblock Local existence and non-explosion of solutions for stochastic
	fractional partial differential equations driven by multiplicative noise.
	\newblock {\em Stochastic Process. Appl.}, 124(5):1974--2002, 2014.
	
	\bibitem{Tang-2018-SIMA}
	H.~Tang.
	\newblock On the pathwise solutions to the {C}amassa-{H}olm equation with
	multiplicative noise.
	\newblock {\em SIAM J. Math. Anal.}, 50(1):1322--1366, 2018.
	
	\bibitem{Tang-2020-Arxiv}
	H.~Tang.
	\newblock Noise effects on dependence on initial data and blow-up for
	stochastic {E}uler--{P}oincar\'{e} equations.
	\newblock {\em arXiv:2002.08719}, 2020.
	
	\bibitem{Tang-Liu-2014-JMP}
	H.~Tang and Z.~Liu.
	\newblock Continuous properties of the solution map for the {E}uler equations.
	\newblock {\em J. Math. Phys.}, 55(3):031504, 10, 2014.
	
	\bibitem{Tang-Liu-2015-ZAMP}
	H.~Tang and Z.~Liu.
	\newblock Well-posedness of the modified {C}amassa-{H}olm equation in {B}esov
	spaces.
	\newblock {\em Z. Angew. Math. Phys.}, 66(4):1559--1580, 2015.
	
	\bibitem{Tang-Shi-Liu-2015-MM}
	H.~Tang, S.~Shi, and Z.~Liu.
	\newblock The dependences on initial data for the b-family equation in critical
	{B}esov space.
	\newblock {\em Monatsh. Math.}, 177(3):471--492, 2015.
	
	\bibitem{Tang-Zhao-Liu-2014-AA}
	H.~Tang, Y.~Zhao, and Z.~Liu.
	\newblock A note on the solution map for the periodic {C}amassa-{H}olm
	equation.
	\newblock {\em Appl. Anal.}, 93(8):1745--1760, 2014.
	
	\bibitem{Taylor-2003-PAMS}
	M.~Taylor.
	\newblock Commutator estimates.
	\newblock {\em Proc. Amer. Math. Soc.}, 131(5):1501--1507, 2003.
	
	\bibitem{Wu-Yin-2009-JDE}
	S.~Wu and Z.~Yin.
	\newblock Global existence and blow-up phenomena for the weakly dissipative
	{C}amassa-{H}olm equation.
	\newblock {\em J. Differential Equations}, 246(11):4309--4321, 2009.
	
	\bibitem{Zhao-Li-Yan-2014-DCDS}
	Y.~Zhao, Y.~Li, and W.~Yan.
	\newblock Local well-posedness and persistence property for the generalized
	{N}ovikov equation.
	\newblock {\em Discrete Contin. Dyn. Syst.}, 34(2):803--820, 2014.
	
	\bibitem{Zhou-Mu-2013-JNS}
	S.~Zhou and C.~Mu.
	\newblock The properties of solutions for a generalized {$b$}-family equation
	with peakons.
	\newblock {\em J. Nonlinear Sci.}, 23(5):863--889, 2013.
	
\end{thebibliography}

\end{document}